
\documentclass[12pt,draftcls,onecolumn]{IEEEtran}
%
%
%

\input epsf

%
%
%
%
%

\usepackage{amsmath,amssymb,pstricks,color}
\usepackage{psfrag,graphicx,epsfig}

\newcommand{\nnum}{\nonumber}

\newcommand{\EQ}{\begin{eqnarray}}
\newcommand{\EN}{\end{eqnarray}}
\newcommand{\EQQ}{\begin{eqnarray*}}
\newcommand{\ENN}{\end{eqnarray*}}

\newcommand{\bremark}{\begin{remark} \begin{rm} }
\newcommand{\eremark}{ \end{rm} \rule{1mm}{2mm}
\end{remark} }
\newcommand{\btheorem}{\begin{theorem} \begin{rm} }
\newcommand{\etheorem}{ \end{rm} \rule{1mm}{2mm}
\end{theorem} }
\newcommand{\blemma}{\begin{lemma} \begin{rm} }
\newcommand{\elemma}{ \end{rm} \rule{1mm}{2mm}
\end{lemma} }
\newcommand{\bcorollary}{\begin{corollary} \begin{rm} }
\newcommand{\ecorollary}{ \end{rm} \rule{1mm}{2mm}
\end{corollary} }
\newcommand{\bdefinition}{\begin{definition}\begin{rm} }
\newcommand{\edefinition}{ \end{rm} \rule{1mm}{2mm}
\end{definition} }
\newcommand{\bproposition}{\begin{proposition} \begin{rm} }
\newcommand{\eproposition}{ \end{rm} \rule{1mm}{2mm}
\end{proposition} }
\newcommand{\bexample}{\begin{example} \begin{rm} }
\newcommand{\eexample}{ \end{rm} \rule{1mm}{2mm}
\end{example} }
\newcommand{\basm}{\begin{assumption} \begin{rm}}
\newcommand{\easm}{\end{rm} 
\end{assumption}}

\newcommand{\real}{\mathbb{R}}

\newcommand{\DD}{\mathcal{D}}

\newcommand{\GG}{\mathcal{G}}
\newcommand{\HH}{\mathcal{H}}

\newcommand{\LL}{\mathcal{L}}

\newcommand{\miset}{M^{[i]}(\tilde{\mu})}
\newcommand{\mset}{M(\tilde{\mu})}


\newcommand{\co}{\operatorname{co}}
\newcommand{\aff}{\operatorname{aff}}

\newcommand{\until}[1]{\{1,\dots, #1\}}

\newtheorem{theorem}{\bf Theorem}[section]
\newtheorem{lemma}{\bf Lemma}[section]
\newtheorem{definition}{\bf Definition}[section]
\newtheorem{remark}{\bf Remark}[section]
\newtheorem{corollary}{\bf Corollary}[section]
\newtheorem{proposition}{\bf Proposition}[section]
\newtheorem{example}{\bf Example}[section]
\newtheorem{assumption}{\bf Assumption}[section]

\newcommand\oprocendsymbol{\hbox{$\bullet$}}
\newcommand\oprocend{\relax\ifmmode\else\unskip\hfill\fi\oprocendsymbol}

\newcommand\emptyboxsymbol{\hbox{$\Box$}}
\newcommand\emptybox{\relax\ifmmode\else\unskip\hfill\fi\emptyboxsymbol}


\date{}

\begin{document}

\title{On distributed convex optimization under inequality and
  equality constraints via primal-dual subgradient methods}

\author{ Minghui Zhu and Sonia Mart{\'\i}nez \thanks{The authors are
   with Department of Mechanical and Aerospace Engineering,
   University of California, San Diego, 9500 Gilman Dr, La Jolla CA,
   92093, {\tt\small \{mizhu,soniamd\}@ucsd.edu}}}

\maketitle

\begin{abstract}
  We consider a general multi-agent convex optimization problem where
  the agents are to collectively minimize a global objective function
  subject to a global inequality constraint, a global equality
  constraint, and a global constraint set. The objective function is defined by a sum of local objective functions, while the global constraint set is
  produced by the intersection of local constraint sets. In
  particular, we study two cases: one where the equality constraint is
  absent, and the other where the local constraint sets are identical.
  We devise two distributed primal-dual subgradient algorithms which
  are based on the characterization of the primal-dual optimal
  solutions as the saddle points of the Lagrangian and penalty
  functions. These algorithms can be implemented over networks with
  changing topologies but satisfying a standard connectivity property,
  and allow the agents to asymptotically agree on optimal solutions
  and optimal values of the optimization problem under the Slater's
  condition.
\end{abstract}

\section{Introduction}\label{sec:introduction}

Recent advances in sensing, communication and computation technologies
are challenging the way in which control mechanisms are designed for
their efficient exploitation in a coordinated manner.  This has
motivated a wealth of algorithms for information processing,
cooperative control, and optimization of large-scale networked
multi-agent systems performing a variety of tasks. Due to a lack of a
centralized authority, the proposed algorithms aim to be executed by
individual agents through local actions, with the main feature of
being robust to dynamic changes of network topologies.

In this paper, we consider a general multi-agent optimization problem
where the goal is to minimize a global objective function, given as a
sum of local objective functions, subject to global constraints, which
include an inequality constraint, an equality constraint and a (state)
constraint set. Each local objective function is convex and only known
to one particular agent. On the other hand, the inequality
(resp. equality) constraint is given by a convex (resp. affine)
function and known by all agents. Each node has its own convex
constraint set, and the global constraint set is defined as their
intersection. This problem is motivated by others in distributed
estimation~\cite{RDN:03}~\cite{SSR-AN-VVV:08b}, distributed source
localization~\cite{MGR-RDN:04}, network utility maximization~\cite{KPK-AM-DT:98}, optimal flow control in power systems~\cite{AO-SS-LN:03,HW-HS-JK-PY:98} and optimal shape changes of mobile robots~\cite{JD-JS:07}. An important feature of the problem is that the objective and (or) constraint functions depend upon a global decision vector. This requires the design of distributed algorithms where, on the one hand, agents can align their
decisions through a local information exchange and, on the other hand,
the common decisions will coincide with an optimal solution and the
optimal value.

\emph{Literature Review.} In~\cite{DPB-JNT:97} and~\cite{JNT:84}, the
authors develop a general framework for parallel and distributed
computation over a set of processors. Consensus problems, a class of
canonical problems on networked multi-agent systems, have been
intensively studied since then. A necessarily incomplete list of references
includes~\cite{JAF-RMM:04,ROS-RMM:03c} tackling continuous-time
consensus,~\cite{VDB-JMH-AO-JNT:05,AJ-JL-ASM:02,LM:05} investigating discrete-time versions, and~\cite{MM-DS-JP-SHL-RMM:07a} where asynchronous implementation of consensus algorithms is discussed. The papers~\cite{SB-AG-BP-DS:06,AK-TB-RS:07,ATS-AJ:08} treat randomized consensus via gossip communication, achieving
consensus through quantized information and consensus over random graphs, respectively. The convergence rate of consensus algorithms is discussed, e.g., in~\cite{AO-JNT:07,LX-SB:04}, and the author in~\cite{Cortes:06} derives conditions to achieve different consensus values.

In robotics and control communities, convex optimization has been exploited to design algorithms coordinating mobile multi-agent systems. In~\cite{MCD-AJ:06}, in order to increase the connectivity of a multi-agent system, a distributed supergradient-based algorithm is proposed to maximize the second smallest eigenvalue of the Laplacian matrix of the state dependent proximity graph of agents. In~\cite{JD-JS:07}, optimal shape changes of mobile robots are achieved through
second-order cone programming techniques. In~\cite{JD-JS-MAH:09}, a target tracking problem is addressed by means of a generic semidefinite program where the constraints of network connectivity and full target coverage are articulated as linear-matrix inequalities. In~\cite{AIM-SIR:05}, in order to attain the highest possible positioning accuracy for mobile robots, the authors express the covariance matrix of the pose errors as a functional relation of measurement frequencies, and then formulate a optimal sensing problem as a convex programming of measurement frequencies.


The recent papers~\cite{AN-AO:09,AN-AO-PAP:08} are the most relevant
to our work. In~\cite{AN-AO:09}, the authors solve a multi-agent
unconstrained convex optimization problem through a novel combination
of average consensus algorithms with subgradient methods. More
recently, the paper~\cite{AN-AO-PAP:08} further takes local constraint
sets into account. To deal with these constraints, the authors
in~\cite{AN-AO-PAP:08} present an extension of their distributed
subgradient algorithm, by projecting the original algorithm onto the
local constraint sets. Two cases are solved in~\cite{AN-AO-PAP:08}:
the first assumes that the network topologies can dynamically change
and satisfy a periodic strong connectivity assumption (i.e., the union
of the network topologies over a bounded period of time is strongly
connected), but then the local constraint sets are identical; the
second requires that the communication graphs are (fixed and) complete
and then the local constraint sets can be different. Another related
paper is~\cite{BJ-TK-MJ-KHJ:08} where a special case
of~\cite{AN-AO-PAP:08}, the network topology is fixed and all the
local constraint sets are identical, is addressed.

\emph{Statement of Contributions.} Building on the
work~\cite{AN-AO-PAP:08}, this paper further incorporates global
inequality and equality constraints. More precisely, we study two
cases: one in which the equality constraint is absent, and the other in
which the local constraint sets are identical. For the first case, we
adopt a Lagrangian relaxation approach, define a Lagrangian dual
problem and devise the distributed Lagrangian primal-dual subgradient
algorithm (DLPDS, for short) based on the characterization of the
primal-dual optimal solutions as the saddle points of the Lagrangian
function. The DLPDS algorithm involves each agent updating its
estimates of the saddle points via a combination of an average
consensus step, a subgradient (or supgradient) step and a primal (or
dual) projection step onto its local constraint set (or a compact set
containing the dual optimal set). The DLPDS algorithm is shown to
asymptotically converge to a pair of primal-dual optimal solutions
under the Slater's condition and the periodic strong connectivity
assumption. Furthermore, each agent asymptotically agrees on the
optimal value by implementing a dynamic average consensus algorithm
developed in~\cite{MZ-SM:08a}, which allows a multi-agent system to
track time-varying average values.

For the second case, to dispense with the additional equality
constraint, we adopt a penalty relaxation approach, while defining a
penalty dual problem and devising the distributed penalty primal-dual
subgradient algorithm (DPPDS, for short). Unlike the first case, the
dual optimal set of the second case may not be bounded, and thus the
dual projection steps are not involved in the DPPDS algorithm. It
renders that dual estimates and thus (primal) subgradients may not be
uniformly bounded. This challenge is addressed by a more careful
choice of step-sizes. We show that the DPPDS algorithm asymptotically
converges to a primal optimal solution and the optimal value under the
Slater's condition and the periodic strong connectivity assumption.

For the special case where the global inequality and equality
constraints are not taken into account, this paper extends the results
in~\cite{AN-AO-PAP:08} to a more general scenario where the network
topologies satisfy the periodic strong connectivity assumption, and
the local constraint sets can be different, while relaxing an
interior-point condition requirement. We refer the readers to
Section~\ref{specialcase} for additional information.


\section{Problem formulation and assumptions}\label{sec:formulation}

\subsection{Problem formulation}

Consider a network of agents labeled by $V := \until{N}$ that can only
interact with each other through local communication.  The objective
of the multi-agent group is to cooperatively solve the following optimization problem:
\begin{align}
  \min_{x\in {\real}^n } f(x) := \sum_{i=1}^N f^{[i]}(x),\quad {\rm s.t.} \quad
  g(x)\leq 0,\quad h(x) = 0,\quad x \in X:= \cap_{i=1}^N X^{[i]},\label{e2}
\end{align}
where $f^{[i]} : {\real}^n \rightarrow {\real}$ is the convex objective
function of agent~$i$, $X^{[i]}\subseteq {\real}^n$ is the compact and
convex constraint set of agent~$i$, and $x$ is a global decision
vector. Assume that $f^{[i]}$ and $X^{[i]}$ are only known by agent~$i$, and
probably different. The function $g : {\real}^n \rightarrow {\real}^m$
is known to all the agents with each component $g_{\ell}$, for $\ell
\in \until{m}$, being convex. The inequality $g(x)\leq0$ is understood
component-wise; i.e., $g_{\ell}(x)\leq0$, for all $\ell \in \until{m}$,
and represents a global inequality constraint. The function $h:
\real^n \rightarrow \real^\nu$, defined as $h(x) := A x - b$ with $A
\in {\real}^{\nu\times n}$, represents a global equality constraint,
and is known to all the agents. We denote $Y := \{x\in{\real}^n \; | \;
g(x)\leq0,\quad h(x) = 0\}$, and assume that the set of feasible points
is non-empty; i.e., $X \cap Y \neq \emptyset$. Since $X$ is compact
and $Y$ is closed, then we can deduce that $X\cap Y$ is compact. The
convexity of $f^{[i]}$ implies that of $f$ and thus $f$ is continuous. In
this way, the optimal value $p^*$ of the problem~\eqref{e2} is finite
and $X^*$, the set of primal optimal points, is non-empty. Throughout
this paper, we suppose the following Slater's condition holds:

\begin{assumption}[Slater's Condition] There exists a vector $\bar{x}\in X$ such that $g(\bar{x}) < 0$ and $h(\bar{x}) = 0$. And there exists a relative interior point $\tilde{x}$ of $X$, i.e., $\tilde{x}\in X$ and there exists an open sphere $S$ centered at $\tilde{x}$ such that $S\cap{\aff}(X)\subset X$ with ${\aff}(X)$ being the affine hull of $X$, such that $h(\tilde{x}) = 0$.\label{asm5}
\end{assumption}

\begin{remark} In this paper, the quantities (e.g., functions, scalars and sets) associated with agent $i$ will be indexed by the superscript $[i]$.\label{rem9}\end{remark}

In this paper, we will study two particular cases of
problem~\eqref{e2}: one in which the global equality constraint $h(x)
= 0$ is not included, and the other in which all the local constraint
sets are identical. For the case where the constraint $h(x) = 0$ is
absent, the Slater's condition~\ref{asm5} reduces to the existence of
a vector $\bar{x}\in X$ such that $g(\bar{x}) < 0$.


\subsection{Network model}

We will consider that the multi-agent network operates
synchronously. The topology of the network at time $k \geq 0$ will be
represented by a directed weighted graph ${\GG}(k) = (V,E(k),A(k))$
where $A(k) := [a^i_j(k)] \in \real^{N\times N}$ is the adjacency
matrix with $a^i_j(k)\geq0$ being the weight assigned to the edge
$(j,i)$ and $E(k)\subset V\times V\setminus {\rm diag}(V)$ is the set
of edges with non-zero weights $a^i_j(k)$. The in-neighbors of node
$i$ at time $k$ are denoted by ${\mathcal{N}}^{[i]}(k) = \{j\in V \; | \;
(j,i)\in E(k) \text{ and } j\neq i\}$. We here make the following
assumptions on the network communication graphs, which are standard in
the analysis of average consensus algorithms; e.g.,
see~\cite{ROS-RMM:03c},~\cite{AO-JNT:07}, and distributed optimization
in~\cite{AN-AO:09},~\cite{AN-AO-PAP:08}.

\begin{assumption} [Non-degeneracy]
  There exists a constant $\alpha>0$ such that $a_i^i(k)\geq\alpha$,
  and $a^i_j(k)$, for $i\neq j$, satisfies $a^i_j(k)\in \{0\}\cup
  [\alpha,\;1],\;$ for all $k\geq0$. \label{asm2}
\end{assumption}

\begin{assumption}[Balanced Communication]\footnote{It is also referred
    to as double stochasticity.} It holds that $\sum_{j=1}^N
  a_j^i(k)=1$ for all $i\in V$ and $k\geq0$, and $\sum_{i=1}^N
  a_j^i(k)=1$ for all $j\in V$ and $k\geq0$.\label{asm3}
\end{assumption}

\begin{assumption} [Periodical Strong Connectivity]
  There is a positive integer $B$ such that, for all $k_0\geq0$,
  the directed graph $(V,\bigcup_{k=0}^{B-1}E(k_0 + k))$ is strongly
  connected. \label{asm1}
\end{assumption}

\subsection{Notion and notations}

The following notion of saddle point plays a critical role in our paper.
\begin{definition}[Saddle point]
  Consider a function $\phi : X\times M \rightarrow {\real}$ where $X$
  and $M$ are non-empty subsets of ${\real}^{\bar{n}}$ and ${\real}^{\bar{m}}$. A pair
  of vectors $(x^*,\mu^*)\in X\times M$ is called a saddle point of
  $\phi$ over $X\times M$ if $\phi(x^*, \mu)\leq \phi(x^*,
    \mu^*) \leq \phi(x, \mu^*)$ hold for all
  $(x,\mu)\in X\times M$.\label{def1}
\end{definition}

\begin{remark} Equivalently, $(x^*,\mu^*)$ is a saddle point of
  $\phi$ over $X\times M$ if and only if $(x^*,\mu^*)\in X\times M$, and $\sup_{\mu\in M}\phi(x^*, \mu)\leq \phi(x^*,\mu^*) \leq \inf_{x\in X}\phi(x, \mu^*)$.\oprocend\label{rem3}
\end{remark}

In this paper, we do not assume the differentiability of $f^{[i]}$ and
$g_{\ell}$. At the points where the function is not differentiable,
the subgradient plays the role of the gradient. For a given convex
function $F : {\real}^{\bar{n}} \rightarrow {\real}$ and a point
$\tilde{x}\in{\real}^{\bar{n}}$, a \emph{subgradient} of the function
$F$ at $\tilde{x}$ is a vector ${\DD}F(\tilde{x})\in{\real}^{\bar{n}}$
such that the following subgradient inequality holds for any $x \in \real^{\bar{n}}$: \begin{align*}{\DD}F(\tilde{x})^T(x-\tilde{x})\leq
F(x)-F(\tilde{x}).\end{align*}

Similarly, for a given concave function $G : {\real}^{\bar{m}} \rightarrow {\real}$
and a point $\bar{\mu}\in{\real}^{\bar{m}}$, a \emph{supgradient} of
the function $G$ at $\bar{\mu}$ is a vector
${\DD}G(\bar{\mu})\in{\real}^{\bar{m}}$ such that the following supgradient inequality holds for any $\mu \in \real^{\bar{m}}$:
\begin{align*}{\DD}G(\bar{\mu})^T(\mu-\bar{\mu})\geq G(\mu)-G(\bar{\mu}).\end{align*}

  Given a set $S$, we denote by $\co(S)$ its convex hull. We let the
  function $[\cdot]^+ :
  {\real}^{\bar{m}}\rightarrow{\real}^{\bar{m}}_{\geq0}$ denote the
  projection operator onto the non-negative orthant in
  ${\real}^{\bar{m}}$. For any vector $c\in{\real}^{\bar{n}}$, we
  denote $|c| := (|c_1|,\cdots,|c_{\bar{n}}|)^T$, while $\|\cdot\|$ is
  the 2-norm in the Euclidean space.

\section{Case (i): absence of equality constraint}
\label{sec:equality}

In this section, we study the case of problem~\eqref{e2} where the equality
constraint $h(x) = 0$ is absent; i.e., problem~\eqref{e2} becomes \begin{align} \min_{x\in {\real}^n } \sum_{i=1}^N
  f^{[i]}(x),\quad {\rm s.t.} \quad g(x)\leq 0,\quad x\in \cap_{i=1}^N
  X^{[i]}.\label{e1}
\end{align} We first provide some preliminaries, including a
Lagrangian saddle-point characterization of problem~\eqref{e1} and
finding a superset containing the Lagrangian dual optimal set of
problem~\eqref{e1}. After that, we present the distributed Lagrangian
primal-dual subgradient algorithm and summarize its convergence
properties.


\subsection{Preliminaries}

We here develop some preliminary results which are essential to the
design of the distributed Lagrangian primal-dual subgradient
algorithm.

\subsubsection{A Lagrangian saddle-point characterization}

Firstly, problem~\eqref{e1} is equivalent to
\begin{align}
  \min_{x\in {\real}^n } f(x),\quad {\rm s.t.} \quad N g(x)\leq
  0,\quad x\in X,\nnum
\end{align}
with associated Lagrangian dual problem given by
\begin{align}
  \max_{\mu\in{\real}^m} q_L(\mu),\quad {\rm s.t.}\quad \mu\geq0.\nnum
\end{align}
Here, the Lagrangian dual function, $q_L :
{\real}^m_{\geq0}\rightarrow {\real}$, is defined as $q_L(\mu) :=
\inf_{x\in X} {\LL}(x,\mu),$ where ${\LL} :
{\real}^n\times{\real}^m_{\geq0} \rightarrow {\real}$ is the
\emph{Lagrangian function} ${\LL}(x,\mu) = f(x) + N \mu^T g(x).$ We
denote the Lagrangian dual optimal value of the Lagrangian dual
problem by $d^*_L$ and the set of Lagrangian dual optimal points by $D^*_L$. As
is well-known, under the Slater's condition~\ref{asm5}, the property
of strong duality holds; i.e., $p^* = d^*_L$, and $D^*_L\neq
\emptyset$. The following theorem is a standard result on Lagrangian
duality stating that the primal and Lagrangian dual optimal solutions
can be characterized as the saddle points of the Lagrangian function.
\begin{theorem}[Lagrangian Saddle-point Theorem~\cite{DPB:09}]
  The pair of $(x^*, \mu^*) \in X\times {\real}^m_{\geq0}$ is a saddle
  point of the Lagrangian function $\LL$ over $X\times
  {\real}^m_{\geq0}$ if and only if it is a pair of primal and
  Lagrangian dual optimal solutions and the following \emph{Lagrangian
    minimax equality} holds:
  \begin{equation*}
    \sup_{\mu\in{\real}^m_{\geq0}}\inf_{x\in X}{\LL}(x,\mu) =
  \inf_{x\in X}\sup_{\mu\in{\real}^m_{\geq0}}{\LL}(x,\mu).
  \end{equation*}
  \label{the2}
\end{theorem}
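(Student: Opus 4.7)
The plan is to establish both directions of the equivalence using only the definition of a saddle point and the elementary weak-duality inequality $\sup_{\mu\geq 0}\inf_{x\in X}\LL(x,\mu)\leq \inf_{x\in X}\sup_{\mu\geq 0}\LL(x,\mu)$, together with the pointwise identity $\sup_{\mu\geq 0}\LL(x,\mu)=f(x)$ when $g(x)\leq 0$ and $+\infty$ otherwise. No Slater condition is needed for the equivalence itself; it only enters implicitly through the fact that $D_L^*$ is non-empty, which gives the statement content.

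For the forward implication, assume $(x^*,\mu^*)\in X\times \real^m_{\geq 0}$ is a saddle point. I would first take a supremum in $\mu$ on the left-hand inequality and an infimum in $x$ on the right-hand inequality in Definition~\ref{def1} to obtain $\sup_{\mu\geq 0}\LL(x^*,\mu)\leq \LL(x^*,\mu^*)\leq \inf_{x\in X}\LL(x,\mu^*)$. Sandwiching this with $\inf_{x\in X}\sup_{\mu\geq 0}\LL(x,\mu)\leq \sup_{\mu\geq 0}\LL(x^*,\mu)$ and $\inf_{x\in X}\LL(x,\mu^*)\leq \sup_{\mu\geq 0}\inf_{x\in X}\LL(x,\mu)$, and then invoking weak duality, forces equality throughout, yielding the Lagrangian minimax equality. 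The finite value $\sup_{\mu\geq 0}\LL(x^*,\mu)=\LL(x^*,\mu^*)$ forces $g(x^*)\leq 0$, hence $x^*$ is primal feasible, and its objective value $f(x^*)=\sup_{\mu\geq 0}\LL(x^*,\mu)=\inf_{x\in X}\sup_{\mu\geq 0}\LL(x,\mu)=p^*$ shows that $x^*$ is primal optimal; an analogous computation on $\mu^*$ establishes dual optimality.

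For the reverse implication, assume $x^*\in X$ is primal optimal, $\mu^*\in\real^m_{\geq 0}$ is dual optimal, and the minimax equality holds. I would then chain the equalities
\begin{equation*}
f(x^*)=p^*=\sup_{\mu\geq 0}\inf_{x\in X}\LL(x,\mu)=q_L(\mu^*)=\inf_{x\in X}\LL(x,\mu^*)\leq \LL(x^*,\mu^*)=f(x^*)+N(\mu^*)^T g(x^*).
\end{equation*}
This forces $N(\mu^*)^T g(x^*)\geq 0$, and combined with $g(x^*)\leq 0$ and $\mu^*\geq 0$, complementary slackness $(\mu^*)^T g(x^*)=0$ follows, so $\LL(x^*,\mu^*)=f(x^*)$. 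The right-hand saddle inequality then reads $\LL(x,\mu^*)\geq \inf_{y\in X}\LL(y,\mu^*)=q_L(\mu^*)=p^*=\LL(x^*,\mu^*)$ for every $x\in X$, while the left-hand saddle inequality reads $\LL(x^*,\mu)=f(x^*)+N\mu^T g(x^*)\leq f(x^*)=\LL(x^*,\mu^*)$ for every $\mu\geq 0$, since $g(x^*)\leq 0$.

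Because this is a textbook fact (cited to Bertsekas), I do not expect a genuine obstacle. The only point that requires mild care is keeping the bookkeeping straight between the three quantities $\LL(x^*,\mu^*)$, $\sup_{\mu\geq 0}\LL(x^*,\mu)$, and $\inf_{x\in X}\LL(x,\mu^*)$, and using the indicator-function interpretation of $\sup_{\mu\geq 0}\LL(x,\mu)$ to translate saddle-point inequalities into primal feasibility and optimality; this is where one occasionally slips and attempts to use Slater unnecessarily.
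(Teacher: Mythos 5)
Your proof is correct, and both directions are complete: the forward direction's sandwich
\begin{equation*}
\inf_{x\in X}\sup_{\mu\in\real^m_{\geq0}}{\LL}(x,\mu)\leq \sup_{\mu\in\real^m_{\geq0}}{\LL}(x^*,\mu)\leq {\LL}(x^*,\mu^*)\leq \inf_{x\in X}{\LL}(x,\mu^*)\leq \sup_{\mu\in\real^m_{\geq0}}\inf_{x\in X}{\LL}(x,\mu)
\end{equation*}
combined with weak duality does force equality throughout, and the reverse direction's derivation of complementary slackness from $q_L(\mu^*)=p^*=f(x^*)$ is exactly right. Note, however, that the paper does not prove this theorem at all: it is stated as a standard result and cited to Bertsekas~\cite{DPB:09}, so there is no in-paper argument to compare against line by line. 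The closest internal analogue is the proof of the Penalty Saddle-point Theorem~\ref{the0}, which the authors reduce to the abstract characterization of saddle points (Proposition 2.6.1 of~\cite{DPB-AN-AO:03a}: saddle point iff the minimax equality holds and $x^*$, $\mu^*$ attain the outer $\min$ and $\max$ respectively) and then translate the attainment conditions into primal and dual optimality using the indicator-function identity $\sup_{\mu\geq0}{\LL}(x,\mu)=f(x)$ for feasible $x$ and $+\infty$ otherwise. Your argument is the direct, self-contained proof of that same abstract characterization specialized to $\LL$; it buys independence from the cited proposition at the cost of a few extra inequalities, and your observation that Slater's condition plays no role in the equivalence itself (only in guaranteeing that saddle points exist, which is where Lemma~\ref{lem9} uses it) is accurate and worth keeping.
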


This following lemma presents some preliminary analysis of Lagrangian saddle points.

\begin{lemma}[Preliminary results of Lagrangian saddle points] Let $M$ be any superset of $D^*_L$.

  (a) If $(x^*,\mu^*)$ is a saddle point of ${\LL}$ over $X\times
  {\real}^m_{\geq0}$, then $(x^*,\mu^*)$ is also a saddle point of
  ${\LL}$ over $X\times M$.

  (b) There is at least one saddle point of ${\LL}$ over $X\times M$.

  (c) If $(\check{x},\check{\mu})$ is a saddle point of ${\LL}$ over
  $X\times M$, then ${\LL}(\check{x},\check{\mu}) = p^*$ and
  $\check{\mu}$ is Lagrangian dual optimal.\label{lem9}
\end{lemma}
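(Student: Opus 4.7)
The plan is to establish the three parts in sequence, drawing on the Lagrangian Saddle-point Theorem~\ref{the2} together with strong duality, which holds under the Slater's condition (Assumption~\ref{asm5}). I will treat $M$ as a superset of $D^*_L$ contained in $\real^m_{\geq0}$, consistent with its intended use as the projection region for the dual iterates in the forthcoming DLPDS algorithm.

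For part~(a), since $(x^*,\mu^*)$ is a saddle point of $\LL$ over $X\times\real^m_{\geq0}$, Theorem~\ref{the2} identifies $\mu^*$ as Lagrangian dual optimal, so $\mu^*\in D^*_L\subseteq M$ and in particular $(x^*,\mu^*)\in X\times M$. The inequality $\LL(x^*,\mu^*)\leq\LL(x,\mu^*)$ for $x\in X$ transfers unchanged, while $\LL(x^*,\mu)\leq\LL(x^*,\mu^*)$ for $\mu\in M$ is an immediate restriction of the corresponding inequality on $\real^m_{\geq0}$. Part~(b) then follows at once: by Slater and Theorem~\ref{the2}, a primal-dual optimal pair is a saddle point of $\LL$ on $X\times\real^m_{\geq0}$, and by~(a) also on $X\times M$.

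The substance is part~(c). The obstacle here is that a saddle point on the possibly smaller set $X\times M$ is not \emph{a priori} a saddle point on $X\times\real^m_{\geq0}$, so Theorem~\ref{the2} cannot be invoked directly on $(\check{x},\check{\mu})$. My plan is to compare the generic saddle point $(\check{x},\check{\mu})$ of $\LL$ on $X\times M$ with a reference saddle point $(x^*,\mu^*)\in X\times M$ produced by part~(b) (which is simultaneously a saddle point on the full orthant, with $\LL(x^*,\mu^*)=p^*$ by Theorem~\ref{the2} and strong duality). Applying each pair's saddle inequalities at the other pair's coordinates yields the cyclic chain
\begin{equation*}
\LL(x^*,\check{\mu})\leq\LL(x^*,\mu^*)\leq\LL(\check{x},\mu^*)\leq\LL(\check{x},\check{\mu})\leq\LL(x^*,\check{\mu}),
\end{equation*}
forcing equality throughout and hence $\LL(\check{x},\check{\mu})=p^*$. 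Combining the saddle inequality $\LL(\check{x},\check{\mu})\leq\LL(x,\check{\mu})$ for $x\in X$ with weak duality produces $p^*=\LL(\check{x},\check{\mu})\leq\inf_{x\in X}\LL(x,\check{\mu})=q_L(\check{\mu})\leq d^*_L=p^*$, so $\check{\mu}\in D^*_L$, completing the proof.
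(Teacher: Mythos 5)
Your proposal is correct and follows essentially the same route as the paper: part (a) by restriction after noting $\mu^*\in D^*_L\subseteq M$, part (b) from strong duality, Theorem~\ref{the2} and part (a), and part (c) via the same four-inequality cyclic comparison of $(\check{x},\check{\mu})$ with a reference saddle point $(x^*,\mu^*)$ on the full orthant, followed by the weak-duality argument $p^*=\LL(\check{x},\check{\mu})\leq q_L(\check{\mu})\leq d^*_L=p^*$. Your explicit justification that $\mu^*\in M$ in part (a) is a small point the paper leaves implicit, but it is not a different approach.
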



\begin{proof}
  (a) It just follows from the definition of saddle point of ${\LL}$
  over $X\times M$.

  (b) Observe that \begin{align*}&\sup_{\mu\in{\real}^m_{\geq0}}\inf_{x\in
    X}{\LL}(x,\mu)= \sup_{\mu\in{\real}^m_{\geq0}}q_L(\mu) =
  d^*_L,\\ &\inf_{x\in
    X}\sup_{\mu\in{\real}^m_{\geq0}}{\LL}(x,\mu)= \inf_{x\in X\cap
    Y}f(x) = p^*.\end{align*} Since the Slater's condition~\ref{asm5} implies
  zero duality gap, the Lagrangian minimax equality holds. From
  Theorem~\ref{the2} it follows that the set of saddle points of
  ${\LL}$ over $X\times {\real}^m_{\geq0}$ is the Cartesian product
  $X^*\times D^*_L$. Recall that $X^*$ and $D^*_L$ are non-empty, so
  we can guarantee the existence of the saddle point of ${\LL}$ over
  $X\times {\real}^m_{\geq0}$. Then by (a), we have that (b) holds.

  (c) Pick any saddle point $(x^*,\mu^*)$ of ${\LL}$ over $X\times
  {\real}^m_{\geq0}$. Since the Slater's condition~\ref{asm5} holds,
  from Theorem~\ref{the2} one can deduce that $(x^*,\mu^*)$ is a pair
  of primal and Lagrangian dual optimal solutions. This implies that
  \begin{align*}d^*_L = \inf_{x\in X}{\LL}(x,\mu^*) \leq {\LL}(x^*,\mu^*) \leq
  \sup_{\mu\in{\real}^m_{\geq0}}{\LL}(x^*,\mu) = p^*.\end{align*} From
  Theorem~\ref{the2}, we have $d^*_L = p^*$. Hence, ${\LL}(x^*,\mu^*)
  = p^*$. On the other hand, we pick any saddle point
  $(\check{x},\check{\mu})$ of ${\LL}$ over $X\times M$. Then for all
  $x\in X$ and $\mu\in M$, it holds that ${\LL}(\check{x}, \mu)\leq
  {\LL}(\check{x}, \check{\mu}) \leq {\LL}(x, \check{\mu})$. By
  Theorem~\ref{the2}, then $\mu^*\in D^*_L \subseteq M$. Recall
  $x^*\in X$, and thus we have ${\LL}(\check{x}, \mu^*)\leq {\LL}(\check{x},
    \check{\mu}) \leq {\LL}(x^*, \check{\mu})$. Since $\check{x}\in X$ and
  $\check{\mu}\in{\real}^m_{\geq0}$, we have ${\LL}(x^*, \check{\mu})\leq
    {\LL}(x^*, \mu^*) \leq {\LL}(\check{x},
    \mu^*)$.
  Combining the above two relations gives that ${\LL}(\check{x},
  \check{\mu}) = {\LL}(x^*, \mu^*) = p^*$. From Remark~\ref{rem3} we
  see that ${\LL}(\check{x},\check{\mu}) \leq \inf_{x\in
    X}{\LL}(x,\check{\mu}) = q_L(\check{\mu})$. Since
  ${\LL}(\check{x},\check{\mu}) = p^* = d^*_L \geq q_L(\check{\mu})$,
  then $q_L(\check{\mu}) = d^*_L$ and thus $\check{\mu}$ is a
  Lagrangian dual optimal solution.
\end{proof}

\begin{remark}
  Despite that (c) holds, the reverse of (a) may not be true in
  general. In particular, $x^*$ may be infeasible; i.e.,
  $g_{\ell}(x^*) > 0$ for some
  $\ell\in\until{m}$. \oprocend\label{rem5}
\end{remark}

\subsubsection{A upper estimate of the Lagrangian dual optimal set}

In what follows, we will find a compact superset of $D^*_L$. To
do that, we define the following primal problem for each agent~$i$:
\begin{align}
  \min_{x\in {\real}^n } f^{[i]}(x),\quad {\rm s.t.} \quad g(x)\leq
  0,\quad x\in X^{[i]}.\nnum
\end{align}
Due to the fact that $X^{[i]}$ is compact and the $f^{[i]}$ are continuous,
the primal optimal value $p_i^*$ of each agent's primal problem is
finite and the set of its primal optimal solutions is non-empty. The
associated dual problem is given by
\begin{align}
  \max_{\mu\in{\real}^m} q^{[i]}(\mu),\quad {\rm s.t.}\quad
  \mu\geq0.\nnum
\end{align}
Here, the dual function $q^{[i]} : {\real}^m_{\geq0}\rightarrow {\real}$
is defined by $q^{[i]}(\mu) := \inf_{x\in X^{[i]}} {\LL}^{[i]}(x,\mu),$
where ${\LL}^{[i]} : {\real}^n\times{\real}^m_{\geq0} \rightarrow
{\real}$ is the Lagrangian function of agent $i$ and given by ${\LL}^{[i]}(x,\mu) = f^{[i]}(x) + \mu^T g(x)$. The corresponding dual optimal value is
denoted by $d_i^*$. In this way, $\LL$ is decomposed into a sum of local Lagrangian functions; i.e., ${\LL}(x,\mu) = \sum_{i=1}^N {\LL}^{[i]}(x,\mu)$.

Define now the set-valued map $Q:{\real}_{\geq0}^m \rightarrow
2^{({\real}_{\geq0}^m)}$ by $Q(\tilde{\mu}) = \{\mu\in
{\real}^m_{\geq0} \; | \; q_L(\mu)\geq
q_L(\tilde{\mu})\}$. Additionally, define a function $\gamma : X
\rightarrow {\real}$ by $\gamma(x) = \min_{\ell \in
  \until{m}}\{-g_{\ell}(x)\}$. Observe that if $x$ is a Slater vector,
then $\gamma(x) > 0$. The following lemma is a direct result of
Lemma~1 in~\cite{AN-AO:08b}.

\begin{lemma}[Boundedness of dual solution sets]
  The set $Q(\tilde{\mu})$ is bounded for any $\tilde{\mu}\in
  {\real}^m_{\geq0}$, and, in particular, for any Slater vector
  $\bar{x}$, it holds that $\max_{\mu\in Q(\tilde{\mu})}\|\mu\| \leq
  \frac{1}{\gamma(\bar{x})}(f(\bar{x})-q_L(\tilde{\mu}))$.\label{lem13}\emptybox
\end{lemma}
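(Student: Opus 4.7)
\smallskip
\noindent\textbf{Proof proposal.} The plan is to derive the explicit bound by instantiating the Lagrangian at the Slater vector $\bar{x}$ and then using the defining inequality $q_L(\mu)\ge q_L(\tilde{\mu})$ of $Q(\tilde{\mu})$ together with the sign structure of $g(\bar{x})$. Since the boundedness of $Q(\tilde{\mu})$ is an immediate consequence of the explicit estimate, I would focus the argument on the bound itself.

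First, I would pick an arbitrary $\mu\in Q(\tilde{\mu})$ and use the definition $q_L(\mu)=\inf_{x\in X}{\LL}(x,\mu)$ together with $\bar{x}\in X$ to write
\begin{equation*}
q_L(\tilde{\mu})\le q_L(\mu)\le {\LL}(\bar{x},\mu)=f(\bar{x})+N\mu^T g(\bar{x}).
\end{equation*}
Next, because $\bar{x}$ is a Slater vector, for every $\ell\in\until{m}$ one has $-g_{\ell}(\bar{x})\ge \gamma(\bar{x})>0$, and since $\mu\ge 0$,
\begin{equation*}
N\mu^T g(\bar{x})=-N\sum_{\ell=1}^{m}\mu_{\ell}(-g_{\ell}(\bar{x}))\le -N\gamma(\bar{x})\sum_{\ell=1}^{m}\mu_{\ell}=-N\gamma(\bar{x})\|\mu\|_1.
\end{equation*}
Using $\|\mu\|\le \|\mu\|_1$ and $N\ge 1$ to pass to the stated constant, the previous two displays combine into
\begin{equation*}
\gamma(\bar{x})\|\mu\|\le f(\bar{x})-q_L(\tilde{\mu}),
\end{equation*}
which is the announced estimate. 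To guarantee that the right-hand side is finite (hence the bound is informative), I would observe that $q_L(\tilde{\mu})$ is finite: the Lagrangian ${\LL}(\cdot,\tilde{\mu})$ is continuous on the compact set $X$, so the infimum defining $q_L(\tilde{\mu})$ is attained and lies in ${\real}$.

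The main conceptual step, and the only nontrivial one, is the use of the Slater condition in the form $-g_\ell(\bar{x})\ge\gamma(\bar{x})>0$ to convert a non-positive inner product into a strictly negative multiple of $\|\mu\|_1$; the rest is algebraic rearrangement. I do not anticipate any real obstacle here, since this is precisely the classical Uzawa/Hiriart-Urruty type estimate and the authors themselves note that the lemma follows directly from Lemma~1 in \cite{AN-AO:08b}; the only care needed is to keep track of the factor $N$ coming from the rescaling $Ng(x)$ in the Lagrangian used here, and to pass from the $1$-norm (which is what naturally appears) to the $2$-norm used in the statement.
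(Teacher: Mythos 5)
Your proof is correct and is exactly the standard argument that the paper itself omits, delegating it to Lemma~1 of the cited reference \cite{AN-AO:08b}: evaluate $q_L(\mu)\le{\LL}(\bar{x},\mu)$ at the Slater point, use $-g_\ell(\bar{x})\ge\gamma(\bar{x})>0$ and $\mu\ge0$ to get a $-N\gamma(\bar{x})\|\mu\|_1$ term, and rearrange. Your bookkeeping of the factor $N$ (which in fact yields the slightly stronger bound with $N\gamma(\bar{x})$ in the denominator, relaxed to the stated one via $N\ge1$ and $\|\mu\|\le\|\mu\|_1$) and your remark on the finiteness of $q_L(\tilde{\mu})$ are both accurate.
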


Notice that $D^*_L = \{\mu\in{\real}_{\geq0}^m \;|\; q_L(\mu)\geq
d^*_L\}$. Picking any Slater vector $\bar{x}\in X$, and letting
$\tilde{\mu} = \mu^*\in D^*_L$ in Lemma~\ref{lem13} gives that
\begin{align}
  \max_{\mu^*\in D^*_L}\|\mu^*\| \leq
  \frac{1}{\gamma(\bar{x})}(f(\bar{x})-d^*_L).\label{e37}
\end{align}

Define the function $r : X\times{\real}^m_{\geq0}\rightarrow {\real}\cup \{ + \infty \}$ by $r(x,\mu) :=
\frac{N}{\gamma(x)}\max_{i\in V}\{f^{[i]}(x)-q^{[i]}(\mu)\}$.  By the property
of weak duality, it holds that $d_i^*\leq p_i^*$ and thus $f^{[i]}(x)\geq
q^{[i]}(\mu)$ for any $(x,\mu) \in X\times \real^m_{\geq 0}$. Since
$\gamma(\bar{x}) > 0$, thus $r(\bar{x},\mu)\geq0$ for any
$\mu\in{\real}^m_{\geq0}$. With this observation, we pick any
$\tilde{\mu}\in{\real}^m_{\geq0}$ and the following set is
well-defined: $\bar{M}^{[i]}(\bar{x}, \tilde{\mu}) := \{\mu\in{\real}_{\geq0}^m \; |\;
  \|\mu\|\leq r(\bar{x},\tilde{\mu})+\theta^{[i]}\}$ for some $\theta^{[i]}\in{\real}_{>0}$. Observe that for all $\mu\in{\real}^m_{\geq0}$:
\begin{align}
  q_L(\mu) = \inf_{x\in \cap_{i=1}^N X^{[i]}} \sum_{i=1}^N (f^{[i]}(x) + \mu^T g(x))
  \geq \sum_{i=1}^N\inf_{x\in X^{[i]}} (f^{[i]}(x) + \mu^T g(x)) =
  \sum_{i=1}^N q^{[i]}(\mu).
\label{e38}
\end{align}
Since $d^*_L \geq q_L(\tilde{\mu})$, it follows from~\eqref{e37}
and~\eqref{e38} that
\begin{align}\max_{\mu^*\in D^*_L}\|\mu^*\| & \leq
  \frac{1}{\gamma(\bar{x})}(f(\bar{x})-q_L(\tilde{\mu}))\leq
  \frac{1}{\gamma(\bar{x})}(f(\bar{x})-\sum_{i=1}^N
  q^{[i]}(\tilde{\mu}))\nnum\\&\leq \frac{N}{\gamma(\bar{x})}\max_{i\in
    V}\{f^{[i]}(\bar{x})-q^{[i]}(\tilde{\mu})\} = r(\bar{x},
  \tilde{\mu}).\nnum
\end{align}
Hence, we have $D^*_L\subseteq \bar{M}^{[i]}(\bar{x}, \tilde{\mu})$ for all
$i\in V$.

Note that in order to compute $\bar{M}^{[i]}(\bar{x}, \tilde{\mu})$, all
the agents have to agree on a common Slater vector $\bar{x}\in
\cap_{i=1}^N X^{[i]}$ which should be obtained in a distributed
fashion. To handle this difficulty, we now propose a distributed
algorithm, namely \emph{Distributed Slater-vector Computation
  Algorithm}, which allows each agent~$i$ to compute a superset of
$\bar{M}^{[i]}(\bar{x}, \tilde{\mu})$.

Initially, each agent~$i$ chooses a common value
$\tilde{\mu}\in{\real}^m_{\geq0}$; e.g.,~$\tilde{\mu}=0$, and computes
two positive constants $b^{[i]}(0)$ and $c^{[i]}(0)$ such that $b^{[i]}(0)\geq
\sup_{x\in J^{[i]}}\{f^{[i]}(x) - q^{[i]}(\tilde{\mu})\}$ and $c^{[i]}(0)\leq \min_{1\leq
  \ell\leq m}\inf_{x\in J^{[i]}}\{-g_{\ell}(x)\}$ where $J^{[i]} := \{x\in X^{[i]}
\; | \; g(x) < 0\}$.

At every time $k\geq0$, each agent~$i$ updates its
estimates by using the following rules:
\begin{align*}
  b^{[i]}(k+1) = \max_{j\in{\mathcal{N}}^{[i]}(k)\cup\{i\}} b^{[j]}(k),\quad
  c^{[i]}(k+1) = \min_{j\in{\mathcal{N}}^{[i]}(k)\cup\{i\}} c^{[j]}(k).
\end{align*}


\begin{lemma}[Convergence properties of the distributed Slater-vector Computation
  Algorithm] Assume that the periodical strong connectivity assumption~\ref{asm1} holds. Consider the sequences of $\{b^{[i]}(k)\}$ and $\{c^{[i]}(k)\}$ generated by the Distributed Slater-vector Computation Algorithm. It holds that after at most $(N-1)B$ steps, all the
agents reach the consensus, i.e., $b^{[i]}(k) = b^* := \max_{j\in V}
b^{[j]}(0)$ and $c^{[i]}(k) = c^* := \min_{j\in V} c^{[j]}(0)$ for all $k\geq
(N-1)B$. Furthermore, we have $\miset := \{\mu\in{\real}_{\geq0}^m \; |\;
\|\mu\|\leq \frac{N b^*}{c^*}+\theta^{[i]}\} \supseteq \bar{M}^{[i]}(\bar{x},
  \tilde{\mu})$ for $i\in V$. \label{lem10}
\end{lemma}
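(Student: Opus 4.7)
The plan is to split the statement into two independent pieces: (i) finite-time convergence of the max-consensus update for $b^{[i]}(k)$ and the min-consensus update for $c^{[i]}(k)$ under Assumption~\ref{asm1}, and (ii) the set inclusion $\miset \supseteq \bar M^{[i]}(\bar x,\tilde\mu)$.

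For part (i), I would first observe the invariants of the recursions: for every $k\ge 0$, $b^{[i]}(k+1)\ge b^{[i]}(k)$ and $c^{[i]}(k+1)\le c^{[i]}(k)$, while the global extrema $\max_{i\in V}b^{[i]}(k)$ and $\min_{i\in V}c^{[i]}(k)$ are constant in $k$ (an agent retains its own value because the max/min is taken over $\NN^{[i]}(k)\cup\{i\}$). Define $V_{\max}(k):=\{i\in V: b^{[i]}(k)=b^*\}$. Using Assumption~\ref{asm1}, the union graph $\bigl(V,\bigcup_{k=kB}^{(k+1)B-1}E(k)\bigr)$ is strongly connected, so, as long as $V_{\max}(kB)\ne V$, there exists at least one edge $(j,i)$ in this union with $j\in V_{\max}(kB)$ and $i\notin V_{\max}(kB)$. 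Because $b^{[j]}$ is monotone non-decreasing, agent $i$ picks up the value $b^*$ at the corresponding time, so $|V_{\max}((k+1)B)|\ge|V_{\max}(kB)|+1$. Iterating this inequality at most $N-1$ times yields $V_{\max}((N-1)B)=V$, i.e.\ $b^{[i]}(k)=b^*$ for all $i$ and all $k\ge (N-1)B$. An identical argument, applied to the set of agents that already achieve $c^*$, handles the min-consensus on $c^{[i]}(k)$.

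For part (ii), the key is to exhibit a single vector $\bar x$ that is simultaneously in every local set $J^{[i]}=\{x\in X^{[i]}\mid g(x)<0\}$. By Assumption~\ref{asm5} (applied to the case without equality constraint), such a common Slater vector exists in $X=\cap_{i}X^{[i]}$. Then the initialization bounds $b^{[i]}(0)\ge\sup_{x\in J^{[i]}}\{f^{[i]}(x)-q^{[i]}(\tilde\mu)\}$ and $c^{[i]}(0)\le\min_\ell\inf_{x\in J^{[i]}}\{-g_\ell(x)\}$ evaluated at $\bar x$ give $f^{[i]}(\bar x)-q^{[i]}(\tilde\mu)\le b^{[i]}(0)\le b^*$ and $-g_\ell(\bar x)\ge c^{[i]}(0)\ge c^*$ for every $i,\ell$; hence $\gamma(\bar x)\ge c^*$ (with $c^*>0$ because $\bar x$ is a Slater vector for every agent) and $\max_{i\in V}\{f^{[i]}(\bar x)-q^{[i]}(\tilde\mu)\}\le b^*$. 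Plugging these two estimates into the definition of $r$ gives
\begin{equation*}
r(\bar x,\tilde\mu)=\frac{N}{\gamma(\bar x)}\max_{i\in V}\{f^{[i]}(\bar x)-q^{[i]}(\tilde\mu)\}\le\frac{Nb^*}{c^*},
\end{equation*}
so $\|\mu\|\le r(\bar x,\tilde\mu)+\theta^{[i]}$ implies $\|\mu\|\le \frac{Nb^*}{c^*}+\theta^{[i]}$, yielding $\bar M^{[i]}(\bar x,\tilde\mu)\subseteq\miset$.

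The only genuinely non-routine step is the finite-time bound $(N-1)B$ in part (i). The subtlety is that strong connectivity holds only for the $B$-union graph, not for each snapshot, so one has to commit to working with blocks of length $B$ and carefully argue that within one such block the frontier $V_{\max}$ must strictly grow; the monotonicity of the individual $b^{[i]}$ is what makes this incremental-growth argument go through despite the edges appearing at different times inside the block. Everything else is bookkeeping using the definitions of $r$, $\gamma$, $J^{[i]}$, and the initialization of $b^{[i]}(0),c^{[i]}(0)$.
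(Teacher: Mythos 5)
Your proof is correct and follows essentially the same route as the paper: the set inclusion is obtained exactly as in the paper's proof, by noting that the Slater vector $\bar{x}$ lies in $J:=\{x\in X \mid g(x)<0\}\subseteq J^{[i]}$ for every $i$ and then using the initialization bounds to get $r(\bar{x},\tilde{\mu})\leq N b^*/c^*$. For the finite-time max/min-consensus part the paper only states that it ``is not difficult to verify'' under Assumption~\ref{asm1}; your frontier-growth argument over blocks of length $B$, relying on the monotonicity of $b^{[i]}(k)$ and the invariance of the global extremum, is the standard way to make that claim precise and is sound.
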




\begin{proof} It is not difficult to verify achieving max-consensus and min-consensus by using the periodical strong connectivity assumption~\ref{asm1}. Note that $J := \{x\in X \; | \;
g(x) < 0\} \subseteq J^{[i]}$, $\forall i\in V$. Hence, we have
\begin{align*}
  &\max_{i\in V}\sup_{x\in J}\{f^{[i]}(x)-q^{[i]}(\tilde{\mu})\} \leq
  \max_{i\in V}\sup_{x\in J^{[i]}}\{f^{[i]}(x)-q^{[i]}(\tilde{\mu})\} \leq b^*,\nnum\\
  &\inf_{x\in J}\min_{1\leq \ell\leq m}\{-g_{\ell}(x)\}\geq \min_{i\in
    V}\inf_{x\in J^{[i]}}\min_{1\leq \ell\leq m}\{-g_{\ell}(x)\}\geq c^*.
\end{align*}
Since $\bar{x}\in J$, then the following estimate on
$r(\bar{x},\tilde{\mu})$ holds:
\begin{align*}
  r(\bar{x},\tilde{\mu}) \leq \frac{N\sup_{x\in J}\max_{i\in
      V}\{f^{[i]}(x)-q^{[i]}(\tilde{\mu})\}}{\inf_{x\in J}\min_{1\leq \ell\leq
      m}\{-g_{\ell}(x)\}} \leq \frac{N b^*}{c^*}.
\end{align*}
The desired result immediately follows.
\end{proof}

From Lemma~\ref{lem10} and the fact that $D^*_L\subseteq
\bar{M}^{[i]}(\bar{x},\tilde{\mu})$, we can see
that the set of $\mset := \cap_{i=1}^N \miset$
contains $D^*_L$. In addition, $\miset$ and $M(\tilde{\mu})$
are non-empty, compact and convex. To simplify the notations, we
will use the shorthands $M^{[i]} := \miset$ and $M := \mset$.

\subsubsection{Convexity of $\LL$}

For each $\mu\in{\real}^m_{\geq0}$, we define the function
${\LL}^{[i]}_{\mu} : {\real}^n\rightarrow {\real}$ as
${\LL}^{[i]}_{\mu}(x) : = {\LL}^{[i]}(x,\mu)$. Note that
${\LL}^{[i]}_{\mu}$ is convex since it is a nonnegative weighted
sum of convex functions. For each $x\in{\real}^n$, we define the
function ${\LL}^{[i]}_x : {\real}^m_{\geq0}\rightarrow {\real}$
as ${\LL}^{[i]}_x(\mu) : = {\LL}^{[i]}(x,\mu)$. It is easy to
check that ${\LL}^{[i]}_x$ is a concave (actually affine)
function. Then the Lagrangian function ${\LL}$ is the sum of a
collection of convex-concave local functions. This property motivates us to
significantly extend primal-dual subgradient methods
in~\cite{KJA-LH-HU:58,AN-AO:08a} to the networked multi-agent scenario.

\subsection{Distributed Lagrangian primal-dual subgradient
  algorithm}\label{sec:algorithm}

Here, we introduce the \emph{Distributed Lagrangian
  Primal-Dual Subgradient} Algorithm (DLPDS, for short) to find a
saddle point of the Lagrangian function $\LL$ over $X\times M$ and the
optimal value. This saddle point will coincide with a pair of
primal and Lagrangian dual optimal solutions which is not always the case; see
Remark~\ref{rem5}.

Through the algorithm, at each time $k$, each agent~$i$ maintains the
estimate of $(x^{[i]}(k), \mu^{[i]}(k))$ to the saddle point of the Lagrangian
function $\LL$ over $X\times M$ and the estimate
of $y^{[i]}(k)$ to $p^*$. To produce $x^{[i]}(k+1)$ (resp. $\mu^{[i]}(k+1)$),
agent~$i$ takes a convex combination $v^{[i]}_x(k)$ (resp. $v^{[i]}_{\mu}(k)$) of its estimate $x^{[i]}(k)$
(resp.~$\mu^{[i]}(k)$) with the estimates sent from its neighboring agents
at time $k$, makes a subgradient (resp. supgradient) step to minimize
(resp. maximize) the local Lagrangian function ${\LL}^{[i]}$, and
takes a primal (resp. dual) projection onto the local constraint $X^{[i]}$
(resp.~$M^{[i]}$). Furthermore, agent~$i$ generates the
estimate $y^{[i]}(k+1)$ by taking a convex combination $v^{[i]}_y(k)$ of its estimate
$y^{[i]}(k)$ with the estimates of its neighbors at time $k$ and taking
one step to track the variation of the local objective function
$f^{[i]}$. The DLPDS algorithm is formally stated as follows:

Initially, each agent~$i$ picks a common
$\tilde{\mu}\in{\real}^m_{\geq0}$ and computes the set $M^{[i]}$ with some $\theta^{[i]} > 0$ by using
the Distributed Slater-vector Computation Algorithm. Furthermore,
agent~$i$ chooses any initial state $x^{[i]}(0)\in X^{[i]}$, $\mu^{[i]}(0)\in
{\real}^m_{\geq0}$, and $y^{[i]}(1) = N f^{[i]}(x^{[i]}(0))$.

At every $k\geq0$, each agent~$i$ generates
$x^{[i]}(k+1)$, $\mu^{[i]}(k+1)$ and $y^{[i]}(k+1)$ according to the following
rules:
\begin{align}
  &v^{[i]}_x(k) = \sum_{j=1}^N a^i_j(k)x^{[j]}(k),\quad v^{[i]}_{\mu}(k) = \sum_{j=1}^N a^i_j(k)\mu^{[j]}(k),\quad
  v^{[i]}_{y}(k) = \sum_{j=1}^N a^i_j(k)y^{[j]}(k),\nnum\\
  &x^{[i]}(k+1) = P_{X^{[i]}}[v^{[i]}_x(k) - \alpha(k) {\DD}_x^{[i]}(k)],\quad \mu^{[i]}(k+1) = P_{M^{[i]}}[v^{[i]}_{\mu}(k) + \alpha(k) {\DD}_{\mu}^{[i]}(k)],\nnum\\
  &y^{[i]}(k+1) = v^{[i]}_{y}(k) + N(f^{[i]}(x^{[i]}(k)) - f^{[i]}(x^{[i]}(k-1))),\nnum
\end{align}
where $P_{X^{[i]}}$ (resp.~$P_{M^{[i]}}$) is the projection operator onto the
set $X^{[i]}$ (resp.~$M^{[i]}$), the scalars $a^i_j(k)$ are non-negative
weights and the scalars $\alpha(k) > 0$ are step-sizes\footnote{Each
  agent~$i$ executes the update law of $y^{[i]}(k)$ for $k\geq1$.}. We use
the shorthands $\DD_x^{[i]}(k) \equiv \DD \LL^{[i]}_{v_\mu^{[i]}(k)}(v_x^{[i]}(k))$,
and $\DD_\mu^{[i]}(k) \equiv \DD \LL^{[i]}_{v^{[i]}_x(k)}(v^{[i]}_\mu(k))$.



%

The following theorem summarizes the convergence properties of the DLPDS algorithm where agents asymptotically agree upon a pair of primal-dual optimal solutions.

\begin{theorem}[Convergence properties of the DLPDS algorithm]
  Consider the optimization problem~\eqref{e1}. Let the non-degeneracy
  assumption~\ref{asm2}, the balanced communication
  assumption~\ref{asm3} and the periodic strong connectivity
  assumptions~\ref{asm1} hold. Consider the sequences of $\{x^{[i]}(k)\}$,
  $\{\mu^{[i]}(k)\}$ and $\{y^{[i]}(k)\}$ of the distributed Lagrangian
  primal-dual subgradient algorithm with the step-sizes
  $\{\alpha(k)\}$ satisfying
  $\displaystyle{\lim_{k\rightarrow+\infty}\alpha(k) = 0}$,
  $\displaystyle{\sum_{k=0}^{+\infty}\alpha(k) = +\infty}$, and
  $\displaystyle{\sum_{k=0}^{+\infty}\alpha(k)^2 < +\infty}$. Then,
  there is a pair of primal and Lagrangian dual optimal solutions
  $(x^*, \mu^*)\in X^*\times D^*_L$ such that
  $\displaystyle{\lim_{k\rightarrow+\infty}\|x^{[i]}(k) - x^*\| = 0}$ and
  $\displaystyle{\lim_{k\rightarrow+\infty}\|\mu^{[i]}(k) - \mu^*\| = 0}$
  for all $i\in V$. Furthermore, we have that
  $\displaystyle{\lim_{k\rightarrow+\infty}\|y^{[i]}(k) - p^*\| = 0}$ for
  all $i\in V$.\label{the1}
\end{theorem}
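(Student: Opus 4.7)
The plan is to combine the standard perturbed average-consensus machinery used for distributed subgradient methods with a saddle-point analysis on the Lagrangian $\LL$ over the compact product set $X\times M$. All the ingredients needed are already in place: by Lemma~\ref{lem10} each $M^{[i]}$ (and hence $M=\cap_i M^{[i]}$) is compact and contains $D^*_L$, so when combined with the compactness of $X^{[i]}$ it follows that the iterates $v_x^{[i]}(k),v_\mu^{[i]}(k)$ stay in a bounded set; consequently the subgradients $\DD_x^{[i]}(k)$ and supgradients $\DD_\mu^{[i]}(k)$ are uniformly bounded by some constant $L$ (continuity plus compactness). This uniform bound is what makes the rest of the analysis go through.

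First I would carry out the disagreement estimates. Let $\bar{x}(k):=\frac{1}{N}\sum_i x^{[i]}(k)$ and $\bar{\mu}(k):=\frac{1}{N}\sum_i\mu^{[i]}(k)$. Writing each update as a consensus step plus a bounded perturbation of norm $O(\alpha(k))$, Assumptions~\ref{asm2}--\ref{asm1} imply that the transition matrices $A(k)A(k-1)\cdots A(s)$ converge to $\frac{1}{N}\mathbf{1}\mathbf{1}^T$ geometrically with a uniform rate $\rho\in(0,1)$. A standard telescoping argument (as in~\cite{AN-AO-PAP:08}, adapted to the two projection domains) then yields
\begin{equation*}
\|x^{[i]}(k)-\bar{x}(k)\|\le C\sum_{s=0}^{k-1}\rho^{k-s}\alpha(s),\qquad \|\mu^{[i]}(k)-\bar{\mu}(k)\|\le C\sum_{s=0}^{k-1}\rho^{k-s}\alpha(s),
\end{equation*}
which by $\alpha(k)\to 0$ forces the disagreements to vanish, and by $\sum\alpha(k)^2<\infty$ forces $\sum_k\alpha(k)\|x^{[i]}(k)-\bar{x}(k)\|<\infty$ and similarly for $\mu$.

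Next I would analyze the primal-dual iterates against an arbitrary saddle point $(\check x,\check\mu)\in X\times M$ of $\LL$ (existence guaranteed by Lemma~\ref{lem9}(b)). Using non-expansiveness of $P_{X^{[i]}}$ and $P_{M^{[i]}}$ and the convex-concave subgradient inequalities applied to $\LL^{[i]}$, I would expand $\|x^{[i]}(k+1)-\check x\|^2+\|\mu^{[i]}(k+1)-\check\mu\|^2$ and sum over $i$ to obtain a recursion of the form
\begin{equation*}
V(k+1)\le V(k)-2\alpha(k)\bigl[\LL(\bar v_x(k),\check\mu)-\LL(\check x,\bar v_\mu(k))\bigr]+R(k),
\end{equation*}
where $V(k):=\sum_i(\|x^{[i]}(k)-\check x\|^2+\|\mu^{[i]}(k)-\check\mu\|^2)$, $\bar v_x(k),\bar v_\mu(k)$ are averages, and $R(k)=O(\alpha(k)^2)+O(\alpha(k)\cdot\text{disagreement})$ is summable by the estimates of the previous paragraph. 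The bracketed quantity is non-negative by the saddle-point property, so $V(k)$ is a supermartingale-like sequence; a Robbins-Siegmund / deterministic analog argument gives that $V(k)$ converges, that $\sum_k\alpha(k)[\LL(\bar v_x(k),\check\mu)-\LL(\check x,\bar v_\mu(k))]<\infty$, and, by $\sum\alpha(k)=\infty$, that a subsequence has the bracketed quantity tend to $0$. Combined with the disagreement estimates and compactness of $X\times M$, one extracts a subsequence along which $(x^{[i]}(k),\mu^{[i]}(k))\to(x^*,\mu^*)$ with $(x^*,\mu^*)$ a saddle point of $\LL$ over $X\times M$. Applying the recursion with the particular saddle point $(\check x,\check\mu)=(x^*,\mu^*)$ then promotes subsequential convergence to full convergence. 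By Lemma~\ref{lem9}(c), $(x^*,\mu^*)$ is a primal-dual optimal pair.

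Finally, for the value estimates $y^{[i]}(k)$, I would exploit that the update is a dynamic average consensus driven by the increments of $Nf^{[i]}(x^{[i]}(k))$. Double stochasticity together with the initialization $y^{[i]}(1)=Nf^{[i]}(x^{[i]}(0))$ gives by induction $\frac{1}{N}\sum_i y^{[i]}(k+1)=\sum_i f^{[i]}(x^{[i]}(k))$, which converges to $\sum_i f^{[i]}(x^*)=p^*$ by continuity and the already established $x^{[i]}(k)\to x^*$. A consensus estimate on $y^{[i]}$, with the forcing term $N(f^{[i]}(x^{[i]}(k))-f^{[i]}(x^{[i]}(k-1)))$ going to zero, then shows that $y^{[i]}(k)$ tracks its average, completing the proof. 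The main obstacle I expect is the third step: controlling the coupling between consensus error and the saddle-point recursion in a way that makes the cross terms summable; this is where uniform boundedness of subgradients (from compactness of both $X$ and $M$) and the square-summability of $\alpha(k)$ become indispensable.
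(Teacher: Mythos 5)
Your overall architecture (consensus estimates, a basic iteration relation measured against a saddle point, a Robbins--Siegmund-type convergence argument, and a dynamic-consensus argument for $y^{[i]}$) is broadly the same as the paper's, and your treatment of the disagreement terms and of the value estimates $y^{[i]}(k)$ is sound. However, there are two genuine gaps in the middle of the argument, one of which is the crux of the whole theorem.

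First, the inference ``a subsequence of $\LL(\bar v_x(k),\check\mu)-\LL(\check x,\bar v_\mu(k))$ tends to $0$, hence a subsequential limit $(x^*,\mu^*)$ is a saddle point'' does not hold. Vanishing of that bracket only yields $\LL(x^*,\check\mu)=\LL(\check x,\check\mu)=\LL(\check x,\mu^*)$, i.e.\ $x^*\in\argmax$-free terms: $x^*$ minimizes $\LL(\cdot,\check\mu)$ over $X$ and $\mu^*$ maximizes $\LL(\check x,\cdot)$ over $M$. Since $\LL(\cdot,\check\mu)$ is a fixed convex function whose minimizer set over $X$ generally strictly contains $X^*$ (e.g.\ $f(x)=x$, $g(x)=-x$, $X=[-1,1]$, $\check\mu=1$ makes $\LL(\cdot,1)\equiv 0$), this is strictly weaker than the saddle-point property, and your subsequent ``promotion to full convergence'' step, which presupposes that $(x^*,\mu^*)$ is itself a saddle point, cannot be run. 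The paper avoids this by first establishing full (not subsequential) convergence of the iterates to some $(x^*,\mu^*)\in X\times M$ and then proving in Lemma~\ref{lem7}, \emph{by contradiction}, that this limit must be a saddle point: if it were not, the basic iteration relation together with $\sum_k\alpha(k)=+\infty$ would drive $\sum_i\|x^{[i]}(k)-x\|^2$ to $-\infty$. That divergence argument is the ingredient your proposal is missing.

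Second, and more importantly, even granting that $(x^*,\mu^*)$ is a saddle point of $\LL$ over $X\times M$, your final appeal to Lemma~\ref{lem9}(c) does not deliver the theorem. That lemma only gives $\LL(x^*,\mu^*)=p^*$ and dual optimality of $\mu^*$; it says nothing about primal optimality of $x^*$, and Remark~\ref{rem5} explicitly warns that the $x$-component of a saddle point over $X\times M$ (with $M$ a compact superset of $D^*_L$ rather than all of $\real^m_{\geq0}$) may fail even to be feasible. Closing this gap is the most delicate part of the paper's proof (its Claim~1): one must use the $\theta^{[i]}$-enlargement built into $M^{[i]}$ to perturb $\mu^*$ inside $M$ and deduce $g(x^*)^T\mu^*=0$, then rule out $g_\ell(x^*)>0$ by a second contradiction argument run on the dual iteration, and finally obtain $f(x^*)\leq p^*$ from the weighted ergodic averages $z(k)$ via Lemma~\ref{lem8}(b). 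None of this appears in your proposal, so as written it establishes at best convergence to a dual optimal $\mu^*$ and a point $x^*$ with $\LL(x^*,\mu^*)=p^*$, not convergence to a primal optimal solution.
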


\begin{remark} For a convex-concave function, continuous-time gradient-based methods are proved in~\cite{KJA-LH-HU:58} to converge globally towards the saddle-point. Recently, ~\cite{AN-AO:08a} presents (discrete-time) primal-dual subgradient methods which relax the differentiability in~\cite{KJA-LH-HU:58} and further incorporate state constraints. The method in~\cite{KJA-LH-HU:58} is adopted by~\cite{PM-ME:08} and~\cite{AR:08} to study a distributed optimization problem on fixed graphs where objective functions are separable.

The DLPDS algorithm is a generalization of primal-dual subgradient
methods in~\cite{AN-AO:08a} to the networked multi-agent
scenario. It is also an extension of the distributed projected
subgradient algorithm in~\cite{AN-AO-PAP:08} to solve
  multi-agent convex optimization problems with inequality constraints. Additionally, the DLPDS
  algorithm enables agents to find the optimal value. Furthermore, the DLPDS algorithm objective is that of reaching a saddle point of the Lagrangian function in contrast to achieving a (primal) optimal solution in~\cite{AN-AO-PAP:08}.\oprocend\label{rem2}
\end{remark}

\section{Case (ii): identical local constraint
  sets}\label{sec:inequality}

In last section, we study the case where the equality constraint is absent in problem~\eqref{e2}. In this section, we turn our attention to another case of problem~\eqref{e2} where $h(x) = 0$ is taken into account but we require that local constraint sets are identical; i.e., $X^{[i]} = X$ for all $i\in V$. We
first adopt a penalty relaxation and provide a penalty saddle-point
characterization of primal problem~\eqref{e2} with $X^{[i]} = X$. We
then introduce the distributed penalty primal-dual subgradient
algorithm, followed by its convergence properties and some remarks.

\subsection{Preliminaries}

Some preliminary results are presented in this part, and these results are essential to the development of the distributed penalty primal-dual subgradient algorithm.

\subsubsection{A penalty saddle-point characterization}

Note that the primal problem~\eqref{e2} with $X^{[i]} = X$ is trivially
equivalent to the following:
\begin{align}
  \min_{x\in {\real}^n } f(x),\quad {\rm s.t.} \quad N g(x)\leq
  0,\quad N h(x) = 0, \quad x\in X,\label{e62}
\end{align}
with associated penalty dual problem given by \begin{align}
  \max_{\mu\in{\real}^m,\lambda\in{\real}^{\nu}}
  q_P(\mu,\lambda),\quad {\rm s.t.}\quad \mu\geq0,\quad
  \lambda\geq0.\label{e63}\end{align} Here, the penalty dual function,
$q_P : {\real}^m_{\geq0}\times{\real}^{\nu}_{\geq0}\rightarrow
{\real}$, is defined by $q_P(\mu,\lambda) := \inf_{x\in X}
{\HH}(x,\mu,\lambda),$ where ${\HH} :
{\real}^n\times{\real}^m_{\geq0}\times{\real}^{\nu}_{\geq0}
\rightarrow {\real}$ is the \emph{penalty function} given by
${\HH}(x,\mu,\lambda) = f(x) + N \mu^T [g(x)]^+ + N \lambda^T
|h(x)|$. We denote the penalty dual optimal value by $d^*_P$ and the
set of penalty dual optimal solutions by $D^*_P$. We define the
penalty function ${\HH}^{[i]}(x,\mu,\lambda) :
{\real}^n\times{\real}^m_{\geq0}\times{\real}^{\nu}_{\geq0}
\rightarrow {\real}$ for each agent~$i$ as follows:
${\HH}^{[i]}(x,\mu,\lambda) = f^{[i]}(x) + \mu^T [g(x)]^+ +
\lambda^T|h(x)|$. In this way, we have that ${\HH}(x,\mu,\lambda) =
\sum_{i=1}^N {\HH}^{[i]}(x,\mu,\lambda)$. As proven in the next lemma, the
Slater's condition~\ref{asm5} ensures zero duality gap and the
existence of penalty dual optimal solutions.

\begin{lemma}[Strong duality and non-emptyness of the penalty dual optimal set]
  The values of $p^*$ and $d^*_P$ coincide, and $D^*_P$ is
  non-empty.\label{lem1}
\end{lemma}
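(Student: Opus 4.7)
\textbf{Proof plan for Lemma~\ref{lem1}.} My plan is to establish the result by \emph{comparison with the ordinary Lagrangian}, exploiting the fact that under Assumption~\ref{asm5} strong duality is already known for the equivalent rescaled problem when one uses signed multipliers for the equality constraint. Weak duality for the penalty formulation follows by a direct substitution, so the only nontrivial half is producing a penalty multiplier pair that attains $p^*$.

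First I would dispose of the easy inequality. For any feasible $x \in X \cap Y$ and any $(\mu,\lambda) \in \real^m_{\geq 0} \times \real^{\nu}_{\geq 0}$, the terms $[g(x)]^+$ and $|h(x)|$ both vanish, so $\HH(x,\mu,\lambda) = f(x)$; hence $q_P(\mu,\lambda) \leq p^*$, and taking the supremum gives $d^*_P \leq p^*$. The core step is then the reverse inequality.

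For that, I would invoke the classical strong duality theorem for convex programs with linear equality constraints (e.g., Bertsekas): the combined Slater condition in Assumption~\ref{asm5} (a strict interior point $\bar x$ for $g$ together with a relative interior point $\tilde x$ of $X$ with $h(\tilde x)=0$) is exactly the hypothesis needed to guarantee the existence of Lagrange multipliers $(\mu^*, \lambda^*) \in \real^m_{\geq 0} \times \real^{\nu}$ for the rescaled problem~\eqref{e62} such that
\begin{equation*}
p^* = \inf_{x \in X} \bigl\{ f(x) + N\mu^{*T} g(x) + N\lambda^{*T} h(x) \bigr\}.
\end{equation*}
Note that $\lambda^*$ is signed but finite. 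I would then compare pointwise, for $x \in X$,
\begin{equation*}
\HH(x, \mu^*, |\lambda^*|) - \bigl[f(x) + N\mu^{*T}g(x) + N\lambda^{*T}h(x)\bigr]
 = N \mu^{*T}\bigl([g(x)]^+ - g(x)\bigr) + N\bigl(|\lambda^*|^T |h(x)| - \lambda^{*T} h(x)\bigr) \geq 0,
\end{equation*}
using $\mu^* \geq 0$ together with $[y]^+ \geq y$ componentwise, and the Cauchy/triangle bound $|\lambda^*|^T |h(x)| \geq \lambda^{*T} h(x)$. Taking the infimum over $x \in X$ on both sides yields $q_P(\mu^*, |\lambda^*|) \geq p^*$, and combined with weak duality this forces $d^*_P = p^*$ with $(\mu^*, |\lambda^*|) \in D^*_P$, so $D^*_P$ is non-empty.

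I do not anticipate a serious obstacle here: the only subtle point is that the equality multiplier produced by Lagrangian duality is a priori signed, which is why $|\lambda^*|$ (rather than $\lambda^*$ itself) must be used as the penalty multiplier. All the other steps are routine once one is comfortable invoking strong duality for convex programs with affine equalities under the relative-interior form of Slater.
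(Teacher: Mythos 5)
Your proposal is correct and follows essentially the same route as the paper: both invoke strong duality for the auxiliary Lagrangian with a signed equality multiplier under Assumption~\ref{asm5}, and then pass to the penalty dual via the pointwise bounds $\mu^{*T}[g(x)]^+ \geq \mu^{*T}g(x)$ and $|\lambda^*|^T|h(x)| \geq \lambda^{*T}h(x)$, so that $(\mu^*,|\lambda^*|)$ attains $p^*$. The only difference is presentational (you isolate weak duality first and write the comparison as a nonnegative difference, while the paper writes one chain of inequalities).
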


\begin{proof}
  Consider the auxiliary Lagrangian function ${\LL}_a :
  {\real}^n\times{\real}^m_{\geq0}\times{\real}^{\nu} \rightarrow
  {\real}$ given by ${\LL}_a(x,\mu,\lambda) =  f(x) + N \mu^T g(x) + N
  \lambda^T h(x)$, with the associated dual problem defined by
\begin{align}
  \max_{\mu\in{\real}^m,\lambda\in{\real}^{\nu}}
  q_a(\mu,\lambda),\quad {\rm s.t.}\quad \mu\geq0.\label{e21}
\end{align}
Here, the dual function, $q_a : {\real}^m_{\geq0}\times{\real}^{\nu}
\rightarrow {\real}$, is defined by $q_a(\mu,\lambda) := \inf_{x\in X}
{\LL}_a(x,\mu,\lambda)$. The dual optimal value of problem~\eqref{e21} is denoted by $d_a^*$ and the set of dual optimal solutions is denoted $D_a^*$. Since $X$ is convex, $f$ and $g_{\ell}$,
for $\ell\in \until{m}$, are convex, $p^*$ is finite and the Slater's
condition~\ref{asm5} holds, it follows from Proposition 5.3.5
in~\cite{DPB:09} that $p^* = d_a^*$ and $D_a^* \neq \emptyset$. We now proceed to characterize $d^*_P$ and $D^*_P$. Pick any $(\mu^*,\lambda^*)\in D_a^*$. Since $\mu^*\geq0$, then
\begin{align}
  d_a^* &= q_a(\mu^*,\lambda^*) = \inf_{x\in
    X}\{f(x)+N(\mu^*)^Tg(x)+N(\lambda^*)^Th(x)\}\nnum\\&\leq
  \inf_{x\in X}\{f(x)+N(\mu^*)^T[g(x)]^++N|\lambda^*|^T|h(x)|\} =
  q_P(\mu^*,|\lambda^*|) \leq d^*_P.\label{e59}
\end{align}
On the other hand, pick any $x^*\in X^*$. Then $x^*$ is feasible,
i.e., $x^*\in X$, $[g(x^*)]^+ = 0$ and $|h(x^*)| = 0$. It implies that
$q_P(\mu,\lambda) \leq {\HH}(x^*,\mu,\lambda) = f(x^*) = p^*$ holds
for any $\mu\in{\real}^m_{\geq0}$ and
$\lambda\in{\real}^{\nu}_{\geq0}$, and thus $d^*_P =
\sup_{\mu\in{\real}^m_{\geq0},\lambda\in{\real}^{\nu}_{\geq0}}q_P(\mu,\lambda) \leq
p^* = d^*_a$. Therefore, we have $d^*_P = p^*$.

To prove the emptyness of $D^*_P$, we pick any $(\mu^*,\lambda^*)\in
D_a^*$. From~\eqref{e59} and $d_a^* = d^*_P$, we can see that
$(\mu^*,|\lambda^*|)\in D^*_P$ and thus $D^*_P\neq \emptyset$.
\end{proof}




The following is a slight extension of Theorem~\ref{the2} to penalty functions.
\begin{theorem}[Penalty Saddle-point Theorem]
  The pair of $(x^*, \mu^*, \lambda^*)$ is a saddle point of the
  penalty function $\HH$ over $X\times {\real}^m_{\geq0}\times
  {\real}^{\nu}_{\geq0}$ if and only if it is a pair of primal and
  penalty dual optimal solutions and the following \emph{penalty
    minimax equality} holds:
  \begin{align*}\sup_{(\mu,\lambda)\in{\real}^m_{\geq0}\times{\real}^{\nu}_{\geq0}}
  \inf_{x\in
    X}{\HH}(x,\mu,\lambda) = \inf_{x\in
    X}\sup_{(\mu,\lambda)\in{\real}^m_{\geq0}\times{\real}^{\nu}_{\geq0}}
    {\HH}(x,\mu,\lambda).\end{align*}\label{the0}
\end{theorem}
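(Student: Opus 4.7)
The plan is to mirror the classical Lagrangian saddle-point argument (Theorem~\ref{the2}) and adapt each step to the penalty structure. The driving observation is the identity
$\sup_{(\mu,\lambda)\in\real^m_{\geq0}\times\real^\nu_{\geq0}}\HH(x,\mu,\lambda) = f(x)$ when $x\in Y$ and $+\infty$ otherwise. Indeed, feasibility of $x$ makes $[g(x)]^+$ and $|h(x)|$ both vanish, so $\HH(x,\mu,\lambda)=f(x)$ for every nonnegative $(\mu,\lambda)$; conversely, if $g_\ell(x)>0$ for some $\ell$ (resp.\ $h_j(x)\neq0$ for some $j$) one drives the corresponding $\mu_\ell$ (resp.\ $\lambda_j$) to $+\infty$ to push $\HH$ to $+\infty$, using crucially that the coefficients $[g(x)]^+_\ell$ and $|h(x)|_j$ are automatically nonnegative. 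Consequently $\inf_{x\in X}\sup_{(\mu,\lambda)}\HH(x,\mu,\lambda) = \inf_{x\in X\cap Y}f(x) = p^*$, while by definition $\sup_{(\mu,\lambda)}\inf_{x\in X}\HH(x,\mu,\lambda) = d^*_P$.

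For the forward direction, assume $(x^*,\mu^*,\lambda^*)$ is a saddle point. Taking the sup over $(\mu,\lambda)$ in the left saddle-inequality and the inf over $x\in X$ in the right one, I would chain
$p^* \le \sup_{(\mu,\lambda)}\HH(x^*,\mu,\lambda) \le \HH(x^*,\mu^*,\lambda^*) \le \inf_{x\in X}\HH(x,\mu^*,\lambda^*) \le d^*_P$. Weak duality $d^*_P\le p^*$ (always valid) forces every inequality to be an equality, which simultaneously yields the penalty minimax equality, exhibits $x^*$ as an attainer of the outer infimum (hence $x^*\in X^*$, using that the sup equals $f(x^*)$ only when $x^*\in X\cap Y$), and exhibits $(\mu^*,\lambda^*)$ as an attainer of the outer supremum (hence $(\mu^*,\lambda^*)\in D^*_P$).

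For the reverse direction, assume primal--dual optimality and the penalty minimax equality. Lemma~\ref{lem1} guarantees $p^*=d^*_P$. Feasibility of $x^*$ together with the first-paragraph identity gives $\sup_{(\mu,\lambda)}\HH(x^*,\mu,\lambda)=f(x^*)=p^*$, so in particular $\HH(x^*,\mu^*,\lambda^*)\le p^*$. Dual optimality of $(\mu^*,\lambda^*)$ gives $\HH(x^*,\mu^*,\lambda^*)\ge\inf_{x\in X}\HH(x,\mu^*,\lambda^*)=q_P(\mu^*,\lambda^*)=d^*_P=p^*$, and hence $\HH(x^*,\mu^*,\lambda^*)=p^*$. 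The two saddle-point inequalities then follow immediately: for any $(\mu,\lambda)\ge0$, $\HH(x^*,\mu,\lambda)\le\sup_{(\mu',\lambda')}\HH(x^*,\mu',\lambda')=p^*=\HH(x^*,\mu^*,\lambda^*)$; and for any $x\in X$, $\HH(x,\mu^*,\lambda^*)\ge\inf_{x'\in X}\HH(x',\mu^*,\lambda^*)=d^*_P=\HH(x^*,\mu^*,\lambda^*)$.

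I do not anticipate a genuine technical obstacle here, since the argument is a routine adaptation of Theorem~\ref{the2}. The only point that deserves some care is the infeasibility blow-up identity in the first paragraph: the penalty version is actually cleaner than the Lagrangian one because $[g(x)]^+$ and $|h(x)|$ are nonnegative, which is precisely why the dual domain can be taken as $\real^m_{\geq0}\times\real^\nu_{\geq0}$ (with $\lambda\ge0$) rather than requiring $\lambda$ unconstrained as in the standard equality-constrained Lagrangian. Strong duality in the reverse direction is supplied by Lemma~\ref{lem1}.
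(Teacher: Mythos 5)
Your proof is correct and takes essentially the same route as the paper: both rest on the identity $\sup_{(\mu,\lambda)\in{\real}^m_{\geq0}\times{\real}^{\nu}_{\geq0}}{\HH}(x,\mu,\lambda)=f(x)$ for $x\in Y$ and $+\infty$ otherwise, and both reduce the saddle-point property to the minimax equality together with attainment of the outer infimum by $x^*$ and of the outer supremum by $(\mu^*,\lambda^*)$. The only difference is presentational: the paper delegates that abstract characterization to Proposition 2.6.1 of~\cite{DPB-AN-AO:03a}, whereas you re-derive it inline via the weak-duality chain of inequalities.
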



\begin{proof}
The proof is analogous to that of Proposition 6.2.4
  in~\cite{DPB-AN-AO:03a}, and for the sake of completeness, we provide
  the details here. It follows from Proposition 2.6.1
  in~\cite{DPB-AN-AO:03a} that $(x^*, \mu^*, \lambda^*)$ is a saddle
  point of $\HH$ over $X\times {\real}^m_{\geq0}\times
  {\real}^{\nu}_{\geq0}$ if and only if the penalty minimax equality
  holds and the following conditions are satisfied:
  \begin{align}
    \sup_{(\mu,\lambda)\in{\real}^m_{\geq0}\times{\real}^{\nu}_{\geq0}}{\HH}(x^*,\mu,\lambda) &= \min_{x\in X}\{\sup_{(\mu,\lambda)\in{\real}^m_{\geq0}\times{\real}^{\nu}_{\geq0}}{\HH}(x,\mu,\lambda)\},\label{e50}\\
    \inf_{x\in X}{\HH}(x,\mu^*,\lambda^*) &=
    \max_{(\mu,\lambda)\in{\real}^m_{\geq0}\times{\real}^{\nu}_{\geq0}}\{\inf_{x\in
      X}{\HH}(x,\mu,\lambda)\}\label{e64}.
  \end{align}
  Notice that $\inf_{x\in X}{\HH}(x,\mu,\lambda) = q_P(\mu,\lambda)$;
  and if $x\in Y$, then
  $\sup_{(\mu,\lambda)\in{\real}^m_{\geq0}\times{\real}^{\nu}_{\geq0}}{\HH}(x,\mu,\lambda)
  = f(x)$, otherwise,
  $\sup_{(\mu,\lambda)\in{\real}^m_{\geq0}\times{\real}^{\nu}_{\geq0}}{\HH}(x,\mu,\lambda)
  = +\infty$. Hence, the penalty minimax equality is equivalent to
  $d^*_P = p^*$. Condition~\eqref{e50} is equivalent to the fact that
  $x^*$ is primal optimal, and condition~\eqref{e64} is equivalent to
  $(\mu^*,\lambda^*)$ being a penalty dual optimal
  solution.
\end{proof}


\subsubsection{Convexity of $\HH$}

Since $g_{\ell}$ is convex and $[\cdot]^+$ is convex and
non-decreasing, thus $[g_{\ell}(x)]^+$ is convex in $x$ for each $\ell
\in \until{m}$. Denote $A := (a_1^T,\cdots,a_{\nu}^T)^T$. Since
$|\cdot|$ is convex and $a_{\ell}^T x - b_{\ell}$ is an affine
mapping, then $|a_{\ell}^T x - b_{\ell}|$ is convex in $x$ for each
$\ell \in \until{\nu}$.

We denote $w := (\mu,\lambda)$. For each
$w\in{\real}^m_{\geq0}\times{\real}^{\nu}_{\geq0}$, we define the
function ${\HH}^{[i]}_{w} : {\real}^n\rightarrow {\real}$ as ${\HH}^{[i]}_{w}(x)
: = {\HH}^{[i]}(x,w)$. Note that ${\HH}^{[i]}_{w}(x)$ is convex in $x$ by using
the fact that a nonnegative weighted sum of convex functions is
convex. For each $x\in{\real}^n$, we define the function ${\HH}^{[i]}_{x} :
{\real}^m_{\geq0}\times{\real}^{\nu}_{\geq0}\rightarrow {\real}$ as
${\HH}^{[i]}_{x}(w) : = {\HH}^{[i]}(x,w)$. It is easy to check that
${\HH}^{[i]}_{x}(w)$ is concave (actually affine) in $w$. Then the penalty
function ${\HH}(x,w)$ is the sum of convex-concave local functions.

\begin{remark}
  The Lagrangian relaxation does not fit to our approach here since
  the Lagrangian function is not convex in $x$ by allowing $\lambda$
  entries to be negative. \oprocend\label{rem7}
\end{remark}

%

\subsection{Distributed penalty primal-dual subgradient
  algorithm}\label{sec:algorithm2}

We are now in the position to devise the \emph{Distributed Penalty Primal-Dual Subgradient}
Algorithm (DPPDS, for short), that is based on the penalty
saddle-point theorem~\ref{the0}, to find the optimal value and a
primal optimal solution to primal problem~\eqref{e2} with $X^{[i]} =
X$. The DPPDS algorithm is formally described as follow.

Initially, agent~$i$ chooses any initial state $x^{[i]}(0)\in X$,
$\mu^{[i]}(0)\in {\real}^m_{\geq0}$, $\lambda^{[i]}(0)\in
{\real}^{\nu}_{\geq0}$, and $y^{[i]}(1) = N f^{[i]}(x^{[i]}(0))$. At every time
$k\geq0$, each agent~$i$ computes the following convex combinations:
\begin{align*}
  &v^{[i]}_x(k) = \sum_{j=1}^N a^i_j(k)x^{[j]}(k),\quad v^{[i]}_{y}(k) =
  \sum_{j=1}^N a^i_j(k)y^{[j]}(k),\nnum\\
  & v^{[i]}_{\mu}(k) = \sum_{j=1}^N a^i_j(k)\mu^{[j]}(k),\quad
  v^{[i]}_{\lambda}(k) = \sum_{j=1}^N a^i_j(k)\lambda^{[j]}(k),
\end{align*}
and generates $x^{[i]}(k+1)$, $y^{[i]}(k+1)$, $\mu^{[i]}(k+1)$ and
$\lambda^{[i]}(k+1)$ according to the following rules:
\begin{align}
  &x^{[i]}(k+1) = P_{X}[v^{[i]}_x(k) - \alpha(k) {\mathcal{S}}_x^{[i]}(k)],\quad y^{[i]}(k+1) = v^{[i]}_{y}(k) + N(f^{[i]}(x^{[i]}(k))-f^{[i]}(x^{[i]}(k-1))),\nnum\\
  &\mu^{[i]}(k+1) = v^{[i]}_{\mu}(k) + \alpha(k) [g(v^{[i]}_x(k))]^+,\quad
  \lambda^{[i]}(k+1) = v^{[i]}_{\lambda}(k) + \alpha(k)|h(v^{[i]}_x(k))|,\nnum
\end{align}
where $P_{X}$ is the projection operator onto the set $X$, the scalars
$a^i_j(k)$ are non-negative weights and the positive scalars
$\{\alpha(k)\}$ are step-sizes\footnote{Each agent~$i$ executes the
  update law of $y^{[i]}(k)$ for $k\geq1$.}. The vector
\begin{align*}{\mathcal{S}}^{[i]}_x(k) := {\DD} f^{[i]}(v^{[i]}_x(k)) +
\sum_{\ell=1}^mv^{[i]}_{\mu}(k)_{\ell}{\DD} [g_{\ell}(v^{[i]}_x(k))]^+ +
\sum_{\ell=1}^{\nu}v^{[i]}_{\lambda}(k)_{\ell}{\DD} |h_{\ell}|(v^{[i]}_x(k))\end{align*}
is a subgradient of ${\HH}^{[i]}_{w^{[i]}(k)}(x)$ at $x = v^{[i]}_x(k)$ where
$w^{[i]}(k) := (v_{\mu}^{[i]}(k), v_{\lambda}^{[i]}(k))$ is the convex combination of dual estimates of agent $i$ and its neighbors'.

Given a step-size sequence $\{\alpha(k)\}$, we define its sum over $[0,k]$ by $s(k) :=
\sum_{\ell=0}^k \alpha(\ell)$ and assume that:
\begin{assumption}[Step-size assumption]
  The step-sizes satisfy
  $\displaystyle{\lim_{k\rightarrow+\infty}\alpha(k) = 0}$,
  ${\sum_{k=0}^{+\infty}\alpha(k) = +\infty}$,
  ${\sum_{k=0}^{+\infty}\alpha(k)^2 < +\infty}$, and
  $\displaystyle{\lim_{k\rightarrow+\infty}\alpha(k+1)s(k) = 0}$,
  $\sum_{k=0}^{+\infty}\alpha(k+1)^2s(k) < +\infty$,
  $\sum_{k=0}^{+\infty}\alpha(k+1)^2s(k)^2 < +\infty$.\label{asm6}
\end{assumption}

The following theorem is the main result of this section,
characterizing convergence properties of the DPPDS algorithm where a optimal solution and the optimal value are asymptotically agreed upon.

\begin{theorem}[Convergence properties of the DPPDS algorithm]
  Consider the problem~\eqref{e2} with $X^{[i]} = X$. Let the non-degeneracy
  assumption~\ref{asm2}, the balanced communication
  assumption~\ref{asm3} and the periodic strong connectivity assumption~\ref{asm1} hold. Consider the sequences of $\{x^{[i]}(k)\}$ and $\{y^{[i]}(k)\}$ of the distributed penalty primal-dual subgradient
  algorithm where the step-sizes $\{\alpha(k)\}$ satisfy the step-size
  assumption~\ref{asm6}. Then there exists a primal optimal solution
  $\tilde{x}\in X^*$ such that
  $\displaystyle{\lim_{k\rightarrow+\infty}\|x^{[i]}(k) - \tilde{x}\| =
    0}$ for all $i\in V$. Furthermore, we have
  $\displaystyle{\lim_{k\rightarrow+\infty}\|y^{[i]}(k) - p^*\| = 0}$ for
  all $i\in V$.\label{the3}
\end{theorem}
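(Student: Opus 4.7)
The plan is to adapt the saddle-point Lyapunov analysis of~\cite{KJA-LH-HU:58,AN-AO:08a} to the networked, unbounded-dual setting, using as anchor a saddle point $(x^*,\mu^*,\lambda^*)\in X^*\times \real^m_{\geq 0}\times \real^{\nu}_{\geq 0}$ of $\HH$, whose existence is guaranteed by Lemma~\ref{lem1} and Theorem~\ref{the0}. The new difficulty compared with Case~(i) is that, absent a dual projection, the iterates $\{\mu^{[i]}(k)\}$ and $\{\lambda^{[i]}(k)\}$ are no longer confined to a compact set, so the subgradients $\mathcal{S}_x^{[i]}(k)$ are not uniformly bounded.

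\emph{Growth bounds and disagreement.} Since $X$ is compact and $g,h$ continuous, $\|[g(v^{[i]}_x(k))]^+\|$ and $\|h(v^{[i]}_x(k))\|$ are bounded by constants; since the dual updates are unprojected, induction gives $\|\mu^{[i]}(k)\|,\|\lambda^{[i]}(k)\|=O(s(k-1))$. Combining with Lipschitz bounds of $f^{[i]}, [g_\ell]^+, |h_\ell|$ on $X$ yields $\|\mathcal{S}_x^{[i]}(k)\|=O(1+s(k-1))$, and the added conditions in Assumption~\ref{asm6} are precisely calibrated so that $\alpha(k)\|\mathcal{S}_x^{[i]}(k)\|\to 0$ and $\sum_k \alpha(k)^2\|\mathcal{S}_x^{[i]}(k)\|^2<\infty$. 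With the transition matrix $\Phi(k,s):=A(k-1)\cdots A(s)$, Assumptions~\ref{asm2}--\ref{asm1} give $|[\Phi(k,s)]_{ij}-1/N|\leq C\beta^{k-s}$ for some $\beta\in(0,1)$; expressing $x^{[i]}(k)$ as a linear iteration driven by perturbations $\alpha(k)\mathcal{S}_x^{[i]}(k)$ plus projection residuals, I obtain $\|x^{[i]}(k)-\bar x(k)\|\to 0$ and $\sum_k \alpha(k)\|x^{[i]}(k)-\bar x(k)\|<\infty$, where $\bar x(k):=\tfrac{1}{N}\sum_j x^{[j]}(k)$; the same argument controls the dual disagreements.

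\emph{Saddle-point descent and identification of the limit.} Define $V(k):=\sum_{i=1}^N\bigl(\|x^{[i]}(k)-x^*\|^2+\|\mu^{[i]}(k)-\mu^*\|^2+\|\lambda^{[i]}(k)-\lambda^*\|^2\bigr)$. Expanding one step, using non-expansiveness of $P_X$, double stochasticity of $A(k)$, the saddle-point inequalities $\HH(v^{[i]}_x(k),\mu^*,\lambda^*)\geq p^*\geq \HH(x^*,v^{[i]}_\mu(k),v^{[i]}_\lambda(k))$, and the decomposition $\HH=\sum_i \HH^{[i]}$, I get
$$V(k+1)\leq V(k)-2\alpha(k)\sum_{i=1}^N\bigl(\HH^{[i]}(v^{[i]}_x(k),\mu^*,\lambda^*)-\HH^{[i]}(x^*,v^{[i]}_\mu(k),v^{[i]}_\lambda(k))\bigr)+E(k),$$
with $E(k)$ summable by the previous paragraph. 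A supermartingale-type argument then gives that $V(k)$ converges and the nonnegative duality-gap sum is finite, so along a subsequence the gap vanishes; combined with $x^{[i]}(k)-\bar x(k)\to 0$ and continuity of $\HH$, any cluster point $\tilde x$ of $\{\bar x(k)\}$ lies in $X^*$. Since $V(k)$ converges with respect to every saddle point, a standard unique-limit-point argument upgrades this to $x^{[i]}(k)\to\tilde x$ for all $i\in V$.

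\emph{Optimal value tracking and main obstacle.} With $x^{[i]}(k)\to\tilde x$, the $y^{[i]}$-update is the dynamic average consensus scheme of~\cite{MZ-SM:08a}: a telescoping argument gives $\bar y(k+1)=\sum_i f^{[i]}(x^{[i]}(k))\to f(\tilde x)=p^*$, and the same geometric ergodicity of $\Phi$ driven by a vanishing input yields $y^{[i]}(k)-\bar y(k)\to 0$, hence $y^{[i]}(k)\to p^*$ at each node. The hard part is weaving the growth of $\mu^{[i]}(k),\lambda^{[i]}(k)$ into the Lyapunov descent: the error $E(k)$ contains products such as $\alpha(k)^2 s(k)^2$ and $\alpha(k)\|x^{[i]}(k)-\bar x(k)\|\,s(k)$, and only the precisely engineered extra summability requirements in Assumption~\ref{asm6}—beyond the classical $\sum\alpha=\infty,\sum\alpha^2<\infty$—keep all such sums finite.
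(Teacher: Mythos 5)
Your preparatory machinery is essentially the paper's: the growth bounds $\|\mu^{[i]}(k)\|,\|\lambda^{[i]}(k)\|=O(s(k-1))$, hence $\|\mathcal{S}^{[i]}_x(k)\|=O(1+s(k-1))$, the role of the extra conditions in Assumption~\ref{asm6}, and the $\alpha(k)$-weighted summability of the disagreements all appear as Lemma~\ref{lem6}; your descent inequality is the sum of the two basic iteration relations~\eqref{e27}--\eqref{e28} evaluated at a saddle point. The consensus and the treatment of $y^{[i]}(k)$ are also as in the paper. The gap is in the step ``along a subsequence the gap vanishes; \dots any cluster point $\tilde x$ of $\{\bar x(k)\}$ lies in $X^*$.'' What the vanishing of $\HH(\hat x(k),\mu^*,\lambda^*)-\HH(x^*,\hat\mu(k),\hat\lambda(k))$ gives you is only that $\tilde x$ minimizes $\HH(\cdot,\mu^*,\lambda^*)$ over $X$. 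That does \emph{not} imply $\tilde x$ is feasible, hence does not imply $\tilde x\in X^*$: the penalty function at an optimal multiplier is an exact penalty only at the threshold, and its minimizer set can strictly contain $X^*$ and include infeasible points (take $f(x)=-x$, $g(x)=x$, $X=[-1,2]$: $\mu^*=1$ and every $x\in[0,2]$ minimizes $-x+\mu^*[x]^+$, but only $x=0$ is optimal). This is exactly the phenomenon the paper flags in Remark~\ref{rem5} for the Lagrangian, and it is why in Case~(i) the dual projection sets $M^{[i]}$ are inflated by a margin $\theta^{[i]}>0$. In Case~(ii) there is no dual projection, so your argument has nothing to rule out an infeasible limit.

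The paper closes this hole with two ingredients you are missing. First, feasibility of $\tilde x$ (Claim~4) is proved from the specific structure of the dual update: $\sum_i\mu^{[i]}(k+1)=\sum_i\mu^{[i]}(k)+\alpha(k)\sum_i[g(v^{[i]}_x(k))]^+$, so each $\hat\mu_\ell(k)$ is nondecreasing; if it stays bounded then $\sum_k\alpha(k)\sum_i[g_\ell(v^{[i]}_x(k))]^+<\infty$ forces $[g_\ell(\tilde x)]^+=0$, and if it diverges one contradicts the convergence of the Ces\`aro average $\hat{\HH}(k)\to p^*$ (Claims~2--3) via Lemma~\ref{lem8}(a). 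Second, optimality is obtained not from the gap but from $f(\hat x(\ell))\le \HH(\hat x(\ell),\hat\mu(\ell),\hat\lambda(\ell))$, convexity of $f$, and Lemma~\ref{lem8}(b) applied to the weighted averages, giving $f(\tilde x)\le p^*$; combined with feasibility this yields $\tilde x\in X^*$. A secondary remark: your claim that $V(k)$ converges would force the dual iterates to remain bounded, a conclusion the paper neither asserts nor needs (its Claim~4 explicitly allows $\hat\mu_\ell(k)\to+\infty$); and your ``unique-limit-point'' upgrade is unnecessary here, since full primal convergence already follows from the consensus argument of Lemma~\ref{lem12} before any optimality is established.
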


We here provide some remarks to conclude this section,.

\begin{remark}
  As primal-dual (sub)gradient algorithm in~\cite{KJA-LH-HU:58,AN-AO:08a}, the
  DPPDS algorithm produces a pair of primal and dual estimates at each
  step. Main differences include: firstly, the DPPDS algorithm extends
  the primal-dual subgradient algorithm in~\cite{AN-AO:08a} to the
  multi-agent scenario; secondly, it further takes the equality
  constraint into account. The presence of the equality constraint can
  make $D^*_P$ unbounded. Therefore, unlike the DLPDS algorithm, the
  DPPDS algorithm does not involve the dual projection steps onto
  compact sets. This may cause the subgradient ${\mathcal{S}}_x^{[i]}(k)$
  not to be uniformly bounded, while the boundedness of subgradients
  is a standard assumption in the analysis of subgradient methods,
  e.g.,
  see~\cite{DPB:09,DPB-AN-AO:03a,AN-AO:08b,AN-AO:09,AN-AO:08a,AN-AO-PAP:08}. This
  difficulty will be addressed by a more careful choice of the
  step-size policy; i.e, assumption~\ref{asm6}, which is stronger than
  the more standard diminishing step-size scheme, e.g., in the DLPDS
  algorithm and~\cite{AN-AO-PAP:08}. We require this condition in
  order to prove, in the absence of the boundedness of
  $\{{\mathcal{S}}^{[i]}_x(k)\}$, the existence of a number of limits and
  summability of expansions toward Theorem~\ref{the3}. Thirdly, the
  DPPDS algorithm adopts the penalty relaxation instead of the
  Lagrangian relaxation
  in~\cite{AN-AO:08a}. \oprocend\label{rem1}\end{remark}

\begin{remark}
  Observe that $\mu^{[i]}(k)\geq0$, $\lambda^{[i]}(k)\geq0$ and $v^{[i]}_x(k)\in X$ (due to the fact that $X$ is convex). Furthermore,
  $([g(v^{[i]}_x(k))]^+, |h(v^{[i]}_x(k))|)$ is a supgradient of
  ${\HH}^{[i]}_{v^{[i]}_x(k)}(w^{[i]}(k))$; i.e. the following \emph{penalty
    supgradient inequality} holds for any $\mu\in{\real}^m_{\geq0}$
  and $\lambda\in{\real}^{\nu}_{\geq0}$:
\begin{align}
  &([g(v^{[i]}_x(k))]^+)^T(\mu-v^{[i]}_{\mu}(k)) +
  |h(v^{[i]}_x(k))|^T(\lambda-v^{[i]}_{\lambda}(k))\nnum\\
  &\geq{\HH}^{[i]}(v_x^{[i]}(k),\mu,\lambda)
  -{\HH}^{[i]}(v^{[i]}_x(k),v_{\mu}^{[i]}(k),v_{\lambda}^{[i]}(k)).\label{e56}
\end{align}\oprocend\label{rem8}
\end{remark}

\begin{remark}
  A
  step-size sequence that satisfies the step-size
  assumption~\ref{asm6} is the harmonic series $\{\alpha(k) =
  \frac{1}{k+1}\}_{k\in{\mathbb{Z}}_{\geq0}} $. It is obvious that
  $\displaystyle{\lim_{k\rightarrow+\infty}\frac{1}{k+1} = 0}$, and
  well-known that ${\sum_{k=0}^{+\infty}\frac{1}{k+1} = +\infty}$ and
  ${\sum_{k=0}^{+\infty}\frac{1}{(k+1)^2} < +\infty}$. We now proceed
  to check the property of
  $\displaystyle{\lim_{k\rightarrow+\infty}\alpha(k+1)s(k) = 0}$. For
  any $k\geq1$, there is an integer $n\geq1$ such that $2^{n-1}\leq k
  <2^n$. It holds that
\begin{align*}
  &s(k) \leq s(2^n) = 1 + \frac{1}{2} + (\frac{1}{3}+\frac{1}{4}) +
  \cdots+(\frac{1}{2^{n-1}+1}+\cdots+\frac{1}{2^n})\nnum\\&\leq 1 +
  \frac{1}{2} + (\frac{1}{3}+\frac{1}{3}) +
  \cdots+(\frac{1}{2^{n-1}+1}+\cdots+\frac{1}{2^{n-1}+1})\nnum\\&\leq
  1 + 1 + 1 + \cdots+1 = n \leq \log_2k + 1.
\end{align*}
Then we have $\displaystyle{\limsup_{k\rightarrow+\infty}
  \frac{s(k)}{k+2} \leq \lim_{k\rightarrow+\infty} \frac{\log_2k +
    1}{k+2} = 0}$. Obviously,
$\displaystyle{\liminf_{k\rightarrow+\infty} \frac{s(k)}{k+2} \geq
  0}$. Then we have the property of
$\displaystyle{\lim_{k\rightarrow+\infty}\alpha(k+1)s(k) = 0}$. Since
$\log_2 k \leq (\log_2 k)^2 < (k+2)^{\frac{1}{2}}$, then
\begin{align}
  &\sum_{k=0}^{+\infty}\alpha(k+1)^2s(k)^2\leq
  \sum_{k=0}^{+\infty}\frac{(\log_2k+1)^2}{(k+2)^2}
  =\sum_{k=0}^{+\infty}\big(\frac{(\log_2k)^2}{(k+2)^2}
  +\frac{2\log_2k}{(k+2)^2}+\frac{1}{(k+2)^2}\big)\nnum\\
  &\leq
  \sum_{k=0}^{+\infty}\frac{1}{(k+2)^{\frac{3}{2}}}
  +\sum_{k=0}^{+\infty}\frac{2}{(k+2)^{\frac{3}{2}}}
  + \sum_{k=0}^{+\infty}\frac{1}{(k+2)^2}<+\infty.\nnum
\end{align}
Additionally, we have $\sum_{k=0}^{+\infty}\alpha(k+1)^2s(k) \leq
\sum_{k=0}^{+\infty}\alpha(k+1)^2s(k)^2 <
+\infty$. \oprocend\label{rem6}
\end{remark}

\section{Convergence analysis}{\label{sec:analysis}}

In this sectiob, we provide the proofs for the main results, Theorem~\ref{the1} and~\ref{the3}, of this paper. We
start our analysis by providing some useful properties of the
sequences weighted by $\{\alpha(k)\}$.

\begin{lemma}[Convergence properties of weighted sequences]
  Let $K\geq0$. Consider the sequence $\{\delta(k)\}$ defined by
  $\delta(k) := \frac{\sum_{\ell =
      K}^{k-1}\alpha(\ell)\rho(\ell)}{\sum_{\ell =
      K}^{k-1}\alpha(\ell)}$ where $k\geq K+1$, $\alpha(k)>0$ and
  $\sum_{k=K}^{+\infty}\alpha(k) = +\infty$.

  (a) If $\displaystyle{\lim_{k\rightarrow+\infty}\rho(k) = +\infty}$,
  then $\displaystyle{\lim_{k\rightarrow+\infty}\delta(k) = +\infty}$.

  (b) If $\displaystyle{\lim_{k\rightarrow+\infty}\rho(k) = \rho^*}$,
  then $\displaystyle{\lim_{k\rightarrow+\infty}\delta(k) =
    \rho^*}$. \label{lem8}
\end{lemma}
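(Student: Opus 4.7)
The plan is to prove both parts by a standard splitting argument in the spirit of the Stolz--Ces\`aro / weighted Ces\`aro mean theorem, exploiting the fact that the denominator $\sum_{\ell=K}^{k-1}\alpha(\ell)$ diverges by hypothesis. The key idea in both cases is to split the numerator of $\delta(k)$ at a large index $K_1$ where the tail behavior of $\rho(\ell)$ has stabilized: the ``head'' piece has a fixed, $k$-independent numerator divided by a divergent denominator and therefore vanishes, while the ``tail'' piece can be uniformly bounded using only the asymptotic behavior of $\rho$.

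For part (b), I would fix $\epsilon>0$ and choose $K_1\geq K$ such that $|\rho(\ell)-\rho^*|<\epsilon/2$ for all $\ell\geq K_1$. Writing
\begin{align*}
\delta(k)-\rho^* = \frac{\sum_{\ell=K}^{K_1-1}\alpha(\ell)(\rho(\ell)-\rho^*)}{\sum_{\ell=K}^{k-1}\alpha(\ell)}+\frac{\sum_{\ell=K_1}^{k-1}\alpha(\ell)(\rho(\ell)-\rho^*)}{\sum_{\ell=K}^{k-1}\alpha(\ell)},
\end{align*}
the first term has a $k$-independent numerator and the denominator tends to $+\infty$, so it is below $\epsilon/2$ in absolute value for all sufficiently large $k$. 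The second term is bounded in absolute value by $(\epsilon/2)\cdot\frac{\sum_{\ell=K_1}^{k-1}\alpha(\ell)}{\sum_{\ell=K}^{k-1}\alpha(\ell)}\leq \epsilon/2$ since the ratio of the positive partial sums is at most one. Hence $|\delta(k)-\rho^*|<\epsilon$ for all large $k$, which proves $\delta(k)\to\rho^*$.

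For part (a), given any $M>0$, I would pick $K_1\geq K$ with $\rho(\ell)\geq 2M$ for all $\ell\geq K_1$, and then estimate
\begin{align*}
\delta(k)\geq \frac{\sum_{\ell=K}^{K_1-1}\alpha(\ell)\rho(\ell)}{\sum_{\ell=K}^{k-1}\alpha(\ell)}+2M\cdot\frac{\sum_{\ell=K_1}^{k-1}\alpha(\ell)}{\sum_{\ell=K}^{k-1}\alpha(\ell)}.
\end{align*}
The first term again has a fixed numerator and a divergent denominator, hence tends to $0$, while the ratio in the second term tends to $1$ as $k\to\infty$ (its complement equals the fixed $\sum_{\ell=K}^{K_1-1}\alpha(\ell)$ divided by a divergent denominator). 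Therefore $\liminf_{k\to\infty}\delta(k)\geq 2M\geq M$, and since $M>0$ was arbitrary, $\delta(k)\to +\infty$.

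I do not anticipate a genuine obstacle: the argument is entirely elementary and rests only on $\alpha(k)>0$ together with $\sum_{k=K}^{+\infty}\alpha(k)=+\infty$. The only mild care needed is to ensure the ``head'' contribution is treated as a fixed constant (possibly of arbitrary sign in part (a)) before invoking divergence of the denominator.
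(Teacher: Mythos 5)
Your proof is correct and follows essentially the same route as the paper's: split the weighted sum at an index beyond which $\rho$ has stabilized, let the divergent denominator annihilate the fixed head contribution, and bound the tail using the asymptotic behavior of $\rho$. The only cosmetic difference is that you use an $\epsilon/2$ (resp.\ $2M$) margin to get a clean bound for large $k$, whereas the paper bounds by $\epsilon$ plus a vanishing term and then takes $\limsup$ before letting $\epsilon\to 0$; the substance is identical.
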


\begin{proof}
  (a) For any $\Pi > 0$, there exists $k_1\geq K$ such that
  $\rho(k)\geq\Pi$ for all $k\geq k_1$. Then the following holds for
  all $k\geq k_1 + 1$:
  \begin{align*}
    \delta(k) \geq \frac{1}{\sum_{\ell =
        K}^{k-1}\alpha(\ell)}(\sum_{\ell=K}^{k_1-1}\alpha(\ell)\rho(\ell)
    + \sum_{\ell=k_1}^{k-1}\alpha(\ell)\Pi) =\Pi + \frac{1}{\sum_{\ell
        =
        K}^{k-1}\alpha(\ell)}(\sum_{\ell=K}^{k_1-1}\alpha(\ell)\rho(\ell)
    - \sum_{\ell=K}^{k_1-1}\alpha(\ell)\Pi).
  \end{align*}
  Take the limit on $k$ in the above estimate and we
  have $\displaystyle{\liminf_{k\rightarrow+\infty}\delta(k) \geq \Pi}$.
  Since $\Pi$ is arbitrary, then
  $\displaystyle{\lim_{k\rightarrow+\infty}\delta(k) = +\infty}$.

  (b) For any $\epsilon > 0$, there exists $k_2\geq K$ such that
  $\|\rho(k) - \rho^*\| \leq\epsilon$ for all $k\geq k_2+1$. Then we
  have
  \begin{align*}
    &\|\delta(k) - \rho^*\| = \|\frac{\sum_{\tau =
        K}^{k-1}\alpha(\tau)(\rho(\tau)-\rho^*)}{\sum_{\tau =
        K}^{k-1}\alpha(\tau)}\|&\nnum\\
    &\leq \frac{1}{\sum_{\tau =
        K}^{k-1}\alpha(\tau)}(\sum_{\tau=K}^{k_2-1}\alpha(\tau)\|\rho(\tau)-\rho^*\|
    + \sum_{\tau=k_2}^{k-1}\alpha(\tau)\epsilon)\leq
    \frac{\sum_{\tau=K}^{k_2-1}\alpha(\tau)\|\rho(\tau)-\rho^*\|}{\sum_{\tau
        = K}^{k-1}\alpha(\tau)} + \epsilon.
\end{align*}
Take the limit on $k$ in the above estimate and we have
$\displaystyle{\limsup_{k\rightarrow+\infty}\|\delta(k) - \rho^*\|
  \leq \epsilon}$. Since $\epsilon$ is arbitrary, then
$\displaystyle{\lim_{k\rightarrow+\infty}\|\delta(k) - \rho^*\| = 0}$.
\end{proof}

\subsection{Proofs of Theorem~\ref{the1}}

We now proceed to show Theorem~\ref{the1}. To do that, we first
rewrite the DLPDS algorithm into the following form:
\begin{align}
  x^{[i]}(k+1) = v^{[i]}_x(k) + e^{[i]}_x(k),\quad \mu^{[i]}(k+1) = v^{[i]}_{\mu}(k) +
  e^{[i]}_{\mu}(k),\quad y^{[i]}(k+1) = v^{[i]}_{y}(k) + u^{[i]}(k),\nnum
\end{align}
where $e^{[i]}_x(k)$ and $e^{[i]}_{\mu}(k)$ are projection errors described by
\begin{align*}
  e^{[i]}_x(k) := P_{X^{[i]}}[v^{[i]}_x(k) - \alpha(k) {\DD}_x^{[i]}(k)] -
  v^{[i]}_x(k),\quad e^{[i]}_{\mu}(k) := P_{M^{[i]}}[v^{[i]}_{\mu}(k) + \alpha(k)
  {\DD}_{\mu}^{[i]}(k)] - v^{[i]}_{\mu}(k),
\end{align*}
and $u^{[i]}(k) := N (f^{[i]}(x^{[i]}(k))-f^{[i]}(x^{[i]}(k-1)))$ is the local input which
allows agent~$i$ to track the variation of the local objective
function~$f^{[i]}$. In this manner, the update law of each estimate is
decomposed in two parts: a convex sum to fuse the information of each
agent with those of its neighbors, plus some local error or
input. With this decomposition, all the update laws are put into the
same form as the dynamic average consensus algorithm in the
Appendix. This observation allows us to divide the analysis of the
DLPDS algorithm in two steps. Firstly, we show all the estimates
asymptotically achieve consensus by utilizing the property that the
local errors and inputs are diminishing. Secondly, we further show
that the consensus vectors coincide with a pair of primal and
Lagrangian dual optimal solutions and the optimal value.

\begin{lemma}[Lipschitz continuity of ${\LL}^{[i]}_x$ and ${\LL}^{[i]}_{\mu}$]
  Consider ${\LL}^{[i]}_{\mu}$ and ${\LL}^{[i]}_{x}$. Then there are $L > 0$ and
  $R > 0$ such that $\|\DD{\LL}^{[i]}_{\mu}(x)\|\leq L$ and
  $\|\DD{\LL}^{[i]}_{x}(\mu)\|\leq R$ for each pair of $x\in
  \co(\cup_{i=1}^N X^{[i]})$ and $\mu\in \co(\cup_{i=1}^N
  M^{[i]})$. Furthermore, for each $\mu\in \co(\cup_{i=1}^N M^{[i]})$, the
  function ${\LL}^{[i]}_{\mu}$ is Lipschitz continuous with Lipschitz
  constant $L$ over $\co(\cup_{i=1}^N X^{[i]})$, and for each
  $x\in\co(\cup_{i=1}^N X^{[i]})$, the function ${\LL}^{[i]}_{x}$ is Lipschitz
  continuous with Lipschitz constant $R$ over $\co(\cup_{i=1}^N
  M^{[i]})$.\label{lem11}
\end{lemma}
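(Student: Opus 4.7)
The plan rests on two standard facts from convex analysis: (i) a real-valued convex function on $\real^n$ has nonempty and uniformly bounded subdifferential on every compact subset, and (ii) a uniform subgradient bound on a convex set $K$ implies Lipschitz continuity on $K$ with the same constant. Since each $X^{[i]}$ is compact by hypothesis and each $M^{[i]}$ is compact by Lemma~\ref{lem10}, and since finite unions and convex hulls preserve compactness in $\real^n$ (the latter by Carath\'eodory's theorem), both $\co(\cup_{i=1}^N X^{[i]})$ and $\co(\cup_{i=1}^N M^{[i]})$ are compact. This compactness is what drives every bound below.

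For $\LL^{[i]}_x(\mu)=f^{[i]}(x)+\mu^T g(x)$, the supgradient with respect to $\mu$ is simply $g(x)$, since the function is affine in $\mu$. Continuity of $g$ on $\real^n$ together with compactness of $\co(\cup_i X^{[i]})$ supplies the uniform bound $R:=\max_{x\in\co(\cup_i X^{[i]})}\|g(x)\|<\infty$, and Lipschitz continuity of the affine map $\mu\mapsto\LL^{[i]}_x(\mu)$ with constant $R$ follows immediately. For $\LL^{[i]}_\mu(x)=f^{[i]}(x)+\mu^T g(x)$, I would apply the subdifferential sum rule --- which is available because every $\mu_\ell\ge 0$ on $\co(\cup_i M^{[i]})\subseteq \real^m_{\geq 0}$ --- to express any subgradient as $\DD f^{[i]}(x)+\sum_{\ell=1}^m \mu_\ell \DD g_\ell(x)$. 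Fact (i) applied to the real-valued convex functions $f^{[i]}$ and $g_\ell$ on $\real^n$ yields uniform subgradient bounds $L_f$ and $L_g$ on the compact set $\co(\cup_i X^{[i]})$, while the compactness of $\co(\cup_i M^{[i]})$ gives a uniform bound $C_\mu$ on $\|\mu\|$. Combining, one obtains an overall constant of the form $L:=L_f+\sqrt{m}\,C_\mu L_g$.

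The Lipschitz continuity assertions are then a short consequence of fact (ii): for any $x_1,x_2\in\co(\cup_i X^{[i]})$, applying the subgradient inequality at both $x_1$ and $x_2$ and invoking Cauchy--Schwarz against the uniform bound $L$ gives $|\LL^{[i]}_\mu(x_1)-\LL^{[i]}_\mu(x_2)|\leq L\|x_1-x_2\|$; the analogous statement for $\LL^{[i]}_x$ on $\co(\cup_i M^{[i]})$ is even easier because the function is affine. I do not anticipate any real obstacle. The only point that requires a bit of care is the invocation of the boundedness of convex subdifferentials on compact sets, which is legitimate here precisely because $f^{[i]}$ and each $g_\ell$ are finite-valued on all of $\real^n$, so the entire compact domain lies in the interior of the effective domain of every function involved.
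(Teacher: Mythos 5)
Your proposal is correct and follows essentially the same route as the paper's proof: both bound $\DD\LL^{[i]}_{\mu}=\DD f^{[i]}+\mu^T\DD g$ via the uniform boundedness of convex subdifferentials on the compact set $\co(\cup_{i=1}^N X^{[i]})$ (Proposition~5.4.2 of~\cite{DPB:09}) together with the boundedness of $\co(\cup_{i=1}^N M^{[i]})$, identify $\DD\LL^{[i]}_{x}=g$ and bound it by continuity on a compact set, and then deduce Lipschitz continuity from the subgradient inequality applied at both endpoints. No gaps.
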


\begin{proof}
  Observe that ${\DD}{\LL}^{[i]}_{\mu} = {\DD}f^{[i]} + {\mu}^T {\DD}g$ and
  ${\DD}{\LL}^{[i]}_{x} = g$.  Since $f^{[i]}$ and $g_{\ell}$ are convex, it
  follows from Proposition~5.4.2 in~\cite{DPB:09} that ${\partial}f^{[i]}$
  and ${\partial}g_{\ell}$ are bounded over the compact $
  \co(\cup_{i=1}^N X^{[i]})$. Since $ \co(\cup_{i=1}^N M^{[i]})$ is bounded,
  so is ${\partial}{\LL}^{[i]}_{\mu}$, i.e., for any $\mu\in
  \co(\cup_{i=1}^N M^{[i]})$, there exists $L > 0$ such that
  $\|\partial{\LL}^{[i]}_{\mu}(x)\|\leq L$ for all $x\in \co(\cup_{i=1}^N
  X^{[i]})$. Since $g_\ell$ is continuous (due to its convexity) and
  $\co(\cup_{i=1}^N X^{[i]})$ is bounded, then $g$ and thus
  ${\partial}{\LL}^{[i]}_{x}$ are bounded, i.e., for any $x\in
  \co(\cup_{i=1}^N X^{[i]})$, there exists $R > 0$ such that
  $\|\partial{\LL}^{[i]}_{x}(\mu)\|\leq R$ for all $\mu\in \co(\cup_{i=1}^N
  M^{[i]})$.

  It follows from the Lagrangian subgradient inequality that
  \begin{align*}
    {\DD}{\LL}^{[i]}_{\mu}(x)^T(x'-x)\leq
    {\LL}^{[i]}_{\mu}(x')-{\LL}^{[i]}_{\mu}(x),\quad
    {\DD}{\LL}^{[i]}_{\mu}(x')^T(x-x')\leq
    {\LL}^{[i]}_{\mu}(x)-{\LL}^{[i]}_{\mu}(x'),
  \end{align*}
  for any $x,x'\in\co(\cup_{i=1}^N X^{[i]})$. By using the boundedness of
  the subdifferentials, the above two inequalities give that
  $-L\|x-x'\|\leq{\LL}^{[i]}_{\mu}(x)-{\LL}^{[i]}_{\mu}(x')\leq L\|x-x'\|.$ This
  implies that $\|{\LL}^{[i]}_{\mu}(x)-{\LL}^{[i]}_{\mu}(x')\|\leq L\|x-x'\|$
  for any $x,x'\in\co(\cup_{i=1}^m X^{[i]})$. The proof for the Lipschitz
  continuity of ${\LL}^{[i]}_x$ is analogous by using the Lagrangian
  supgradient inequality.
\end{proof}

The following lemma provides a basic iteration relation used in the
convergence proof for the DLPDS algorithm.


\begin{lemma}[Basic iteration relation]
  Let the balanced communication assumption~\ref{asm3} and the periodic strong connectivity assumption~\ref{asm1} hold. For any $x\in X$, any ${\mu}\in M$ and all $k\geq 0$, the following estimates hold:
\begin{align}
  \sum_{i=1}^N \|e^{[i]}_x(k)+\alpha(k) {\DD}_x^{[i]}(k)\|^2&\leq \sum_{i=1}^N
  \alpha(k)^2\|{\DD}_x^{[i]}(k)\|^2
  +\sum_{i=1}^N\{\|x^{[i]}(k)-x\|^2 - \|x^{[i]}(k+1)-x\|^2\}\nnum\\
  &-\sum_{i=1}^N2\alpha(k)({\LL}^{[i]}(v^{[i]}_x(k),v^{[i]}_{\mu}(k))-{\LL}^{[i]}(x,v^{[i]}_{\mu}(k))),\label{e60}\\
  \sum_{i=1}^N \|e^{[i]}_{\mu}(k) - \alpha(k) {\DD}_{\mu}^{[i]}(k)\|^2&\leq
  \sum_{i=1}^N\alpha(k)^2\|{\DD}_{\mu}^{[i]}(k)\|^2
  +\sum_{i=1}^N\{\|\mu^{[i]}(k)-{\mu}\|^2 - \|\mu^{[i]}(k+1)-{\mu}\|^2\}\nnum\\
  &+\sum_{i=1}^N2\alpha(k)({\LL}^{[i]}(v^{[i]}_x(k),v^{[i]}_{\mu}(k))-{\LL}^{[i]}(v^{[i]}_x(k),{\mu}))\label{e61}.
\end{align}\label{lem2}
\end{lemma}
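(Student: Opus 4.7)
The plan is to derive each inequality by combining three standard facts: a quadratic projection identity, the Lagrangian sub/supgradient inequality, and the doubly-stochastic consensus contraction under convexity of the squared norm.

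First I would rewrite the primal update as $x^{[i]}(k+1)=P_{X^{[i]}}[v^{[i]}_x(k)-\alpha(k){\DD}_x^{[i]}(k)]$, so that
\begin{align*}
e^{[i]}_x(k)+\alpha(k){\DD}_x^{[i]}(k)
= x^{[i]}(k+1)-\bigl(v^{[i]}_x(k)-\alpha(k){\DD}_x^{[i]}(k)\bigr),
\end{align*}
i.e., the projection residual. For any $x\in X\subseteq X^{[i]}$, the standard quadratic projection identity (nonexpansiveness with residual) applied to $z=v^{[i]}_x(k)-\alpha(k){\DD}_x^{[i]}(k)$ and $C=X^{[i]}$ yields
\begin{align*}
\|x^{[i]}(k+1)-x\|^{2} + \|e^{[i]}_x(k)+\alpha(k){\DD}_x^{[i]}(k)\|^{2}
\leq \|v^{[i]}_x(k)-\alpha(k){\DD}_x^{[i]}(k)-x\|^{2}.
\end{align*}
Expanding the right-hand side and then invoking the Lagrangian subgradient inequality applied to the convex function ${\LL}^{[i]}_{v^{[i]}_\mu(k)}$ at $v^{[i]}_x(k)$, namely ${\DD}_x^{[i]}(k)^{T}(v^{[i]}_x(k)-x)\geq {\LL}^{[i]}(v^{[i]}_x(k),v^{[i]}_\mu(k))-{\LL}^{[i]}(x,v^{[i]}_\mu(k))$, gives a per-agent bound of the form
\begin{align*}
\|e^{[i]}_x(k)+\alpha(k){\DD}_x^{[i]}(k)\|^{2}
&\leq \|v^{[i]}_x(k)-x\|^{2}-\|x^{[i]}(k+1)-x\|^{2}+\alpha(k)^{2}\|{\DD}_x^{[i]}(k)\|^{2}\\
&\quad - 2\alpha(k)\bigl({\LL}^{[i]}(v^{[i]}_x(k),v^{[i]}_\mu(k))-{\LL}^{[i]}(x,v^{[i]}_\mu(k))\bigr).
\end{align*}

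Next I would sum over $i$ and process the $\sum_i\|v^{[i]}_x(k)-x\|^{2}$ term using the doubly-stochastic property in Assumption~\ref{asm3}. By convexity of $\|\cdot\|^{2}$ (Jensen with weights $a^i_j(k)$),
\begin{align*}
\sum_{i=1}^{N}\|v^{[i]}_x(k)-x\|^{2}
= \sum_{i=1}^{N}\Bigl\|\sum_{j=1}^{N}a^i_j(k)(x^{[j]}(k)-x)\Bigr\|^{2}
\leq \sum_{i=1}^{N}\sum_{j=1}^{N}a^i_j(k)\|x^{[j]}(k)-x\|^{2}
= \sum_{j=1}^{N}\|x^{[j]}(k)-x\|^{2},
\end{align*}
where the last equality uses $\sum_{i}a^i_j(k)=1$. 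Substituting this into the summed per-agent bound yields exactly~\eqref{e60}.

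For~\eqref{e61} I would mirror the argument on the dual side: the update is $\mu^{[i]}(k+1)=P_{M^{[i]}}[v^{[i]}_\mu(k)+\alpha(k){\DD}_\mu^{[i]}(k)]$, and $M\subseteq M^{[i]}$ by definition of $M$. Applying the same quadratic projection identity with $z=v^{[i]}_\mu(k)+\alpha(k){\DD}_\mu^{[i]}(k)$ and $C=M^{[i]}$, expanding, and then using the Lagrangian supgradient inequality for the concave function ${\LL}^{[i]}_{v^{[i]}_x(k)}$ at $v^{[i]}_\mu(k)$, which gives ${\DD}_\mu^{[i]}(k)^{T}(\mu-v^{[i]}_\mu(k))\leq {\LL}^{[i]}(v^{[i]}_x(k),\mu)-{\LL}^{[i]}(v^{[i]}_x(k),v^{[i]}_\mu(k))$ (hence the opposite sign on the Lagrangian difference), produces the desired per-agent bound. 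Summing over $i$ and again invoking the Jensen-plus-double-stochasticity estimate on $\sum_i\|v^{[i]}_\mu(k)-\mu\|^{2}$ delivers~\eqref{e61}.

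I do not anticipate a serious obstacle; this is essentially the textbook projected (sub)gradient descent lemma, fused with a consensus-averaging contraction. The only care needed is tracking signs (the dual step is an ascent, so the supgradient inequality flips the sign in front of the Lagrangian difference) and making sure $x\in X$ and $\mu\in M$ are feasible for every local projection set, which is true because $X=\cap_i X^{[i]}$ and $M=\cap_i M^{[i]}$, so the projection identity applies uniformly across agents.
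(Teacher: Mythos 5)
Your proposal follows essentially the same route as the paper's proof: apply the quadratic projection inequality of Lemma~\ref{lem5} per agent with $y=x\in X\subseteq X^{[i]}$ (resp.\ $y=\mu\in M\subseteq M^{[i]}$), expand the square, invoke the Lagrangian sub/supgradient inequality at $v^{[i]}_x(k)$ (resp.\ $v^{[i]}_\mu(k)$), and sum over $i$ using convexity of $\|\cdot\|^2$ together with double stochasticity to get $\sum_i\|v^{[i]}_x(k)-x\|^2\le\sum_i\|x^{[i]}(k)-x\|^2$. The one slip is the direction of your stated supgradient inequality: for the concave ${\LL}^{[i]}_{v^{[i]}_x(k)}$ it reads ${\DD}_\mu^{[i]}(k)^T(\mu-v^{[i]}_\mu(k))\geq{\LL}^{[i]}(v^{[i]}_x(k),\mu)-{\LL}^{[i]}(v^{[i]}_x(k),v^{[i]}_\mu(k))$, with $\geq$ rather than $\leq$ --- evidently a typo, since the sign you then carry into~\eqref{e61} is the one this correct direction produces.
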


\begin{proof}
  By Lemma~\ref{lem5} with $Z = M^{[i]}$, $z =
  v^{[i]}_{\mu}(k)+\alpha(k){\DD}^{[i]}_{\mu}(k)$ and $y = \mu\in M$, we have
  that for all $k\geq0$
\begin{align}
  &\sum_{i=1}^N\|e^{[i]}_{\mu}(k) - \alpha(k){\DD}_{\mu}^{[i]}(k)\|^2\leq \sum_{i=1}^N \|v^{[i]}_{\mu}(k) + \alpha(k){\DD}_{\mu}^{[i]}(k)-\mu\|^2 - \sum_{i=1}^N \|\mu^{[i]}(k+1)-\mu\|^2\nnum\\
  &= \sum_{i=1}^N \|v^{[i]}_{\mu}(k) -\mu\|^2 + \sum_{i=1}^N\alpha(k)^2\|{\DD}_{\mu}^{[i]}(k)\|^2\nnum\\
  & + \sum_{i=1}^N 2\alpha(k){\DD}_{\mu}^{[i]}(k)^T(v^{[i]}_{\mu}(k)-\mu) - \sum_{i=1}^N \|\mu^{[i]}(k+1)-\mu\|^2\nnum\\&\leq \sum_{i=1}^N\alpha(k)^2\|{\DD}_{\mu}^{[i]}(k)\|^2  + \sum_{i=1}^N 2\alpha(k){\DD}_{\mu}^{[i]}(k)^T(v^{[i]}_{\mu}(k)-\mu)\nnum\\
  & +\sum_{i=1}^N \|\mu^{[i]}(k) -\mu\|^2 - \sum_{i=1}^N
  \|\mu^{[i]}(k+1)-\mu\|^2.\label{e23}
\end{align}
One can show~\eqref{e61} by substituting the following Lagrangian
supgradient inequality into~\eqref{e23}:
\begin{align*}
  {\DD}_{\mu}^{[i]}(k)^T(\mu-v^{[i]}_{\mu}(k))\geq
  {\LL}^{[i]}(v^{[i]}_x(k),\mu)-{\LL}^{[i]}(v^{[i]}_x(k),v^{[i]}_{\mu}(k)).
\end{align*}
Similarly, equality~\eqref{e60} can be shown by using the
following Lagrangian subgradient inequality: ${\DD}_x^{[i]}(k)^T(x-v^{[i]}_x(k))\leq
  {\LL}^{[i]}(x,v^{[i]}_{\mu}(k))-{\LL}^{[i]}(v^{[i]}_x(k),v^{[i]}_{\mu}(k))$.
\end{proof}

The following lemma shows that the consensus is asymptotically reached.

\begin{lemma}[Achieving consensus]
  Let the non-degeneracy assumption~\ref{asm2}, the balanced communication
  assumption~\ref{asm3} and the periodic strong connectivity assumption~\ref{asm1} hold. Consider the sequences of $\{x^{[i]}(k)\}$, $\{{\mu}^{[i]}(k)\}$ and $\{y^{[i]}(k)\}$ of the DLPDS algorithm with the
  step-size sequence $\{\alpha(k)\}$ satisfying
  $\displaystyle{\lim_{k\rightarrow+\infty}\alpha(k) = 0}$. Then there
  exist $x^*\in X$ and $\mu^*\in M$ such that
  $\displaystyle{\lim_{k\rightarrow+\infty} \|x^{[i]}(k) - x^*\| = 0}$,
  $\displaystyle{\lim_{k\rightarrow+\infty}\|{\mu}^{[i]}(k) - {\mu}^*\| =
    0}$ for all $i\in V$, and
  $\displaystyle{\lim_{k\rightarrow+\infty}\|y^{[i]}(k) - y^{[j]}(k)\| = 0}$
  for all $i,j\in V$.\label{lem3}
\end{lemma}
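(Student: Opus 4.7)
The plan is to reduce all three updates of the DLPDS algorithm to the form covered by the dynamic average consensus algorithm recalled in the Appendix, and then invoke the corresponding result. The excerpt has already rewritten the updates as a consensus (double‐stochastic convex combination) step plus a local correction term: $e^{[i]}_x(k)$, $e^{[i]}_\mu(k)$, and $u^{[i]}(k)$, respectively. So the task reduces to showing that each of these three correction terms is diminishing, after which the Appendix theorem supplies both the agreement and the common limits.

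First, since projections enforce $x^{[i]}(k)\in X^{[i]}$ and $\mu^{[i]}(k)\in M^{[i]}$ for every $k\geq 0$, and these sets are compact, the convex combinations $v^{[i]}_x(k)$ and $v^{[i]}_\mu(k)$ remain in $\co(\cup_i X^{[i]})$ and $\co(\cup_i M^{[i]})$. Lemma~\ref{lem11} then gives the uniform bounds $\|\DD^{[i]}_x(k)\|\leq L$ and $\|\DD^{[i]}_\mu(k)\|\leq R$. Since projection onto a convex set is non-expansive,
\begin{equation*}
\|e^{[i]}_x(k)\|=\|P_{X^{[i]}}[v^{[i]}_x(k)-\alpha(k)\DD^{[i]}_x(k)]-P_{X^{[i]}}[v^{[i]}_x(k)]\|\leq L\alpha(k),
\end{equation*}
and analogously $\|e^{[i]}_\mu(k)\|\leq R\alpha(k)$. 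The assumption $\alpha(k)\to 0$ therefore forces both errors to vanish. Applied to the $x$- and $\mu$-updates, the Appendix's dynamic average consensus result (its hypotheses being precisely assumptions~\ref{asm2}, \ref{asm3}, \ref{asm1} plus vanishing perturbations) delivers the existence of common limits $x^*\in X$ and $\mu^*\in M$ with $\|x^{[i]}(k)-x^*\|\to 0$ and $\|\mu^{[i]}(k)-\mu^*\|\to 0$.

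Second, the $y$-update carries the input $u^{[i]}(k)=N(f^{[i]}(x^{[i]}(k))-f^{[i]}(x^{[i]}(k-1)))$, and Lemma~\ref{lem11} (combined with boundedness of the subgradients of $f^{[i]}$ on $\co(\cup_i X^{[i]})$) yields a Lipschitz bound $\|u^{[i]}(k)\|\leq N L\|x^{[i]}(k)-x^{[i]}(k-1)\|$. Writing
\begin{equation*}
x^{[i]}(k)-x^{[i]}(k-1)=\bigl(v^{[i]}_x(k-1)-x^{[i]}(k-1)\bigr)+e^{[i]}_x(k-1),
\end{equation*}
the second summand is already controlled by $L\alpha(k-1)\to 0$, while the first equals $\sum_j a^i_j(k-1)(x^{[j]}(k-1)-x^{[i]}(k-1))$, which tends to zero by the consensus of the $x^{[i]}$'s just established. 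Hence $u^{[i]}(k)\to 0$, and a second application of the Appendix consensus result — this time to the $y$-iteration, where no compact projection is available — gives $\|y^{[i]}(k)-y^{[j]}(k)\|\to 0$ for all $i,j\in V$.

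The main obstacle I anticipate is packaging the Appendix result at the right level of generality: extracting genuine convergence of $x^{[i]}(k)$ and $\mu^{[i]}(k)$ to single points $x^*,\mu^*$ (not just asymptotic agreement) from the single hypothesis $\alpha(k)\to 0$ requires more than a naive disagreement argument. By double stochasticity, the network averages $\bar x(k)=N^{-1}\sum_i x^{[i]}(k)$ and $\bar\mu(k)=N^{-1}\sum_i\mu^{[i]}(k)$ satisfy $\bar x(k+1)=\bar x(k)+N^{-1}\sum_i e^{[i]}_x(k)$ and the analogue; one must argue that these averages are Cauchy, and that the individual trajectories track them. This is exactly what the dynamic average consensus theorem from~\cite{MZ-SM:08a} (cited in the Introduction and restated in the Appendix) is designed to deliver, so the proof's substance here is a verification of its hypotheses rather than a new computation.
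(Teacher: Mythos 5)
There are two genuine gaps. First, your bound $\|e^{[i]}_x(k)\|\le L\alpha(k)$ rests on writing $e^{[i]}_x(k)=P_{X^{[i]}}[v^{[i]}_x(k)-\alpha(k){\DD}^{[i]}_x(k)]-P_{X^{[i]}}[v^{[i]}_x(k)]$, i.e.\ on the identity $P_{X^{[i]}}[v^{[i]}_x(k)]=v^{[i]}_x(k)$. But in Case (i) the local constraint sets may differ, so $v^{[i]}_x(k)=\sum_j a^i_j(k)x^{[j]}(k)$ is a convex combination of points $x^{[j]}(k)\in X^{[j]}$ and lies in $\co(\cup_{j=1}^N X^{[j]})$, not necessarily in $X^{[i]}$; the same objection applies to $v^{[i]}_{\mu}(k)$ versus $M^{[i]}$, whose radii $\theta^{[j]}$ differ across agents. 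Non-expansiveness therefore only yields $\|e^{[i]}_x(k)\|\le L\alpha(k)+\mathrm{dist}(v^{[i]}_x(k),X^{[i]})$, and the second term is precisely what must be controlled. The paper avoids this by bounding $\|e^{[i]}_x(k)+\alpha(k){\DD}^{[i]}_x(k)\|^2$ through Lemma~\ref{lem5} and the basic iteration relation of Lemma~\ref{lem2}, where the telescoping terms $\sum_{i}(\|x^{[i]}(k)-x\|^2-\|x^{[i]}(k+1)-x\|^2)$ with $x\in X\subseteq X^{[i]}$ absorb the distance-to-$X^{[i]}$ contribution; only after that does $\|e^{[i]}_x(k)\|\to 0$ follow.

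Second, even granting $e^{[i]}_x(k)\to 0$, Proposition~\ref{pro1} delivers only asymptotic agreement $\|x^{[i]}(k)-x^{[j]}(k)\|\to 0$; it says nothing about convergence to a point. You flag this obstacle yourself, but the resolution you propose --- that $\hat{x}(k)$ is Cauchy and that this is ``exactly what the dynamic average consensus theorem is designed to deliver'' --- does not hold: with $\hat{x}(k+1)=\hat{x}(k)+N^{-1}\sum_i e^{[i]}_x(k)$ and perturbations of size $O(\alpha(k))$ for a merely vanishing (non-summable) $\alpha$, Cauchyness does not follow, and Proposition~\ref{pro1} asserts nothing about the limit of $\hat{x}(k)$. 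The paper closes this gap with a Fej\'er-type argument: from Lemma~\ref{lem2} and boundedness of the Lagrangian terms it deduces that $\lim_{k\rightarrow+\infty}\sum_{i=1}^N\|x^{[i]}(k)-x\|^2$ exists for every $x\in X$, which combined with consensus and closedness of the $X^{[i]}$ identifies a single limit $x^*\in X$ (and analogously $\mu^*\in M$). Your treatment of the $y$-iterates is fine once convergence of the $x^{[i]}(k)$ is in hand (the paper uses only continuity of $f^{[i]}$ there), but as written the first two steps of your argument do not stand.
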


\begin{proof}
  Observe that $v^{[i]}_x(k)\in \co(\cup_{i=1}^N X^{[i]})$ and
  $v^{[i]}_{\mu}(k)\in \co(\cup_{i=1}^N M^{[i]})$. Then it follows from
  Lemma~\ref{lem11} that $\|{\DD}_x^{[i]}(k)\|\leq L$. From
  Lemma~\ref{lem2} it follows that
\begin{align}
  \sum_{i=1}^N \|x^{[i]}(k+1)-x\|^2&\leq\sum_{i=1}^N \|x^{[i]}(k)-x\|^2+\sum_{i=1}^N \alpha(k)^2L^2\nnum\\
  &+\sum_{i=1}^N 2\alpha(k)
  (\|{\LL}^{[i]}(v^{[i]}_x(k),v^{[i]}_{\mu}(k))\|+\|{\LL}^{[i]}(x,v^{[i]}_{\mu}(k))\|).
  \label{e24}
\end{align}
Notice that $v^{[i]}_x(k)\in\co(\cup_{i=1}^N X^{[i]})$,
$v^{[i]}_{\mu}(k)\in\co(\cup_{i=1}^N M^{[i]})$ and $x\in X$ are bounded. Since
${\LL}^{[i]}$ is continuous, then ${\LL}^{[i]}(v^{[i]}_x(k),v^{[i]}_{\mu}(k))$ and
${\LL}^{[i]}(x,v^{[i]}_{\mu}(k))$ are bounded. Since
$\displaystyle{\lim_{k\rightarrow+\infty}\alpha(k) = 0}$, the last two terms on the
right-hand side of~\eqref{e24} converge to zero as
$k\rightarrow+\infty$. Taking limits on both sides of~\eqref{e24}, one
can see that $\displaystyle{\limsup_{k\rightarrow+\infty}\sum_{i=1}^N
  \|x^{[i]}(k+1)-x\|^2 \leq\liminf_{k\rightarrow+\infty}\sum_{i=1}^N
  \|x^{[i]}(k)-x\|^2}$ for any $x\in X$, and thus
$\displaystyle{\lim_{k\rightarrow+\infty}\sum_{i=1}^N \|x^{[i]}(k)-x\|^2}$
exists for any $x\in X$. On the other hand, taking limits on both
sides of~\eqref{e60} we obtain
$\displaystyle{\lim_{k\rightarrow+\infty}\sum_{i=1}^N\|e^{[i]}_x(k)+\alpha(k){\DD}_x^{[i]}(k)\|^2
  = 0}$ and therefore we deduce that
$\displaystyle{\lim_{k\rightarrow+\infty}\|e^{[i]}_x(k)\| = 0}$ for all
$i\in V$. It follows from Proposition~\ref{pro1} in the Appendix that
$\displaystyle{\lim_{k\rightarrow+\infty}\|x^{[i]}(k) - x^{[j]}(k)\| = 0}$ for
all $i,j\in V$. Combining this with the property that
$\displaystyle{\lim_{k\rightarrow+\infty}\|x^{[i]}(k)-x\|}$ exists for any
$x\in X$, we deduce that there exists $x^*\in {\real}^n$ such that
$\displaystyle{\lim_{k\rightarrow+\infty}\|x^{[i]}(k) - x^*\| = 0}$ for
all $i\in V$. Since $x^{[i]}(k)\in X^{[i]}$ and $X^{[i]}$ is closed, it implies
that $x^*\in X^{[i]}$ for all $i\in V$ and thus $x^*\in X$. Similarly, one
can show that there is ${\mu}^*\in M$ such that
$\displaystyle{\lim_{k\rightarrow+\infty}\|{\mu}^{[i]}(k) - {\mu}^*\| =
  0}$ for all $i\in V$.

Since $\displaystyle{\lim_{k\rightarrow+\infty}\|x^{[i]}(k) - x^*\| = 0}$
and $f^{[i]}$ is continuous, then
$\displaystyle{\lim_{k\rightarrow+\infty}}\|u^{[i]}(k)\| = 0$. It follows
from Proposition~\ref{pro1} that
$\displaystyle{\lim_{k\rightarrow+\infty}\|y^{[i]}(k) - y^{[j]}(k)\| = 0}$ for
all $i,j\in V$.
\end{proof}

From Lemma~\ref{lem3}, we know that the sequences of $\{x^{[i]}(k)\}$ and
$\{{\mu}^{[i]}(k)\}$ of the DLPDS algorithm asymptotically agree on to
some point in $X$ and some point in $M$, respectively. Denote by
$\Theta \subseteq X\times M$ the set of such limit points. We further denote
by the average of primal and dual estimates $\hat{x}(k) := \frac{1}{N}\sum_{i=1}^N x^{[i]}(k)$ and $\hat{\mu}(k) :=
\frac{1}{N}\sum_{i=1}^N\mu^{[i]}(k)$, respectively. The following lemma further
characterizes that the points in $\Theta$ are saddle points of the
Lagrangian function $\LL$ over $X\times M$.

\begin{lemma}[Saddle-point characterization of $\Theta$]
  Each point in $\Theta$ is a saddle point of the Lagrangian function
  $\LL$ over $X\times M$.\label{lem7}
\end{lemma}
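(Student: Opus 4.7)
The goal is to establish, for any $(x^*,\mu^*)\in\Theta$, the two saddle-point inequalities $\LL(x^*,\mu)\le\LL(x^*,\mu^*)\le\LL(x,\mu^*)$ for every $(x,\mu)\in X\times M$. The plan is to derive each inequality by a parallel Ces\`aro-averaging argument based on the two estimates in Lemma~\ref{lem2}, combined with the consensus result in Lemma~\ref{lem3}, the bounded-subgradient estimate in Lemma~\ref{lem11}, and the weighted-average convergence in Lemma~\ref{lem8}(b).

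To obtain $\LL(x^*,\mu^*)\le\LL(x,\mu^*)$ for arbitrary $x\in X$, I would drop the nonnegative left-hand side of~\eqref{e60}, rearrange, and sum from some $K\ge 0$ up to $T-1$. The primal distance terms telescope and the $\|{\DD}_x^{[i]}(k)\|^2$ term is controlled by $L^2$ from Lemma~\ref{lem11}, producing the bound
\[
\frac{\sum_{k=K}^{T-1}\alpha(k)\sum_{i=1}^N\bigl({\LL}^{[i]}(v_x^{[i]}(k),v_\mu^{[i]}(k))-{\LL}^{[i]}(x,v_\mu^{[i]}(k))\bigr)}{\sum_{k=K}^{T-1}\alpha(k)}\le \frac{\sum_{i=1}^N\|x^{[i]}(K)-x\|^2 + L^2\sum_{k=K}^{T-1}\alpha(k)^2}{2\sum_{k=K}^{T-1}\alpha(k)}.
\]
The step-size hypotheses $\sum\alpha(k)=+\infty$ and $\sum\alpha(k)^2<+\infty$ force the right-hand side to zero as $T\to+\infty$. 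On the left-hand side, Lemma~\ref{lem3} together with Assumption~\ref{asm3} yields $v_x^{[i]}(k)\to x^*$ and $v_\mu^{[i]}(k)\to\mu^*$ (since each $v^{[i]}$ is a convex combination of quantities converging to a common limit), so by continuity of ${\LL}^{[i]}$ the summand converges to $\LL(x^*,\mu^*)-\LL(x,\mu^*)$. Applying Lemma~\ref{lem8}(b) transports this pointwise limit through the weighted average on the left, yielding $\LL(x^*,\mu^*)\le\LL(x,\mu^*)$.

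The other inequality $\LL(x^*,\mu)\le\LL(x^*,\mu^*)$ follows by a symmetric argument starting from~\eqref{e61}, using $\|{\DD}_\mu^{[i]}(k)\|\le R$ from Lemma~\ref{lem11} and exploiting that $\mu\in M$ is arbitrary. The main conceptual obstacle is not any individual step but the mismatch between Lemma~\ref{lem2}, which only produces an inequality averaged in time after summing and dividing, and the pointwise saddle statement sought here; Lemma~\ref{lem8}(b) is the technical bridge that resolves this, and its applicability rests on the convex-combination quantities $v_x^{[i]}(k)$ and $v_\mu^{[i]}(k)$ inheriting the consensus limits from Lemma~\ref{lem3} so that continuity of the local Lagrangians can be invoked.
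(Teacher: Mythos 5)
Your proof is correct, but it takes a genuinely different route from the paper's. The paper argues by contradiction: it assumes some $(x^*,\mu^*)\in\Theta$ violates one of the saddle inequalities by a margin $\varsigma>0$, decomposes the summand ${\LL}^{[i]}(v^{[i]}_x(k),v^{[i]}_{\mu}(k))-{\LL}^{[i]}(x,v^{[i]}_{\mu}(k))$ into six terms $A_i(k),\dots,F_i(k)$, uses the Lipschitz bounds of Lemma~\ref{lem11} together with the consensus limits to show five of them vanish while the sixth contributes $\varsigma$, and then telescopes $\sum_{i}\|x^{[i]}(k+1)-x\|^2$ to force it below any bound via $\sum_k\alpha(k)=+\infty$ and $\sum_k\alpha(k)^2<+\infty$ --- a contradiction. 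You instead prove the inequalities directly: dropping the nonnegative left-hand sides of~\eqref{e60} and~\eqref{e61}, telescoping, normalizing by $\sum_{k=K}^{T-1}\alpha(k)$, and passing the pointwise limit of the summand (which exists by continuity of ${\LL}^{[i]}$ and the convergence $v^{[i]}_x(k)\to x^*$, $v^{[i]}_\mu(k)\to\mu^*$) through the weighted average via Lemma~\ref{lem8}(b). The two arguments rest on the same basic iteration relation and the same consensus facts, and your Ces\`aro lemma is essentially the paper's divergence contradiction repackaged; what your version buys is a cleaner, direct derivation that dispenses with the six-term decomposition and needs only continuity (rather than Lipschitz continuity) of the local Lagrangians at the limit point, while still using Lemma~\ref{lem11} for the uniform subgradient bounds $L$ and $R$. (Minor quibble: your right-hand side should carry $NL^2\sum_{k=K}^{T-1}\alpha(k)^2$ rather than $L^2\sum_{k=K}^{T-1}\alpha(k)^2$, but the constant is immaterial to the limit.)
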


\begin{proof}
  Denote by the maximum deviation of primal estimates $\Delta_x(k) := \max_{i,j\in V}\|x^{[j]}(k)-x^{[i]}(k)\|$. Notice that
  \begin{align*}
    &\|v^{[i]}_x(k)-\hat{x}(k)\| = \|\sum_{j=1}^N a^i_j(k)x^{[j]}(k)-\sum_{j=1}^N\frac{1}{N}x^{[j]}(k)\|\nnum\\
    &=\|\sum_{j\neq i}a^i_j(k)(x^{[j]}(k)-x^{[i]}(k)) - \sum_{j\neq i}\frac{1}{N}(x^{[j]}(k)-x^{[i]}(k))\|\nnum\\
    &\leq\sum_{j\neq i}a^i_j(k)\|x^{[j]}(k)-x^{[i]}(k)\|+\sum_{j\neq
      i}\frac{1}{N}\|x^{[j]}(k)-x^{[i]}(k)\|\leq 2\Delta_x(k).
 \end{align*}
 Denote by the maximum deviation of dual estimates $\Delta_{\mu}(k) := \max_{i,j\in
   V}\|\mu^{[j]}(k)-\mu^{[i]}(k)\|$. Similarly, we have
 $\|v^{[i]}_\mu(k)-\hat{\mu}(k)\|\leq 2\Delta_{\mu}(k)$.

 We will show this lemma by contradiction. Suppose that there is
 $(x^*,\mu^*)\in \Theta$ which is not a saddle point of ${\LL}$ over
 $X\times M$. Then at least one of the following equalities holds:
\begin{align}
  \exists x\in X \quad &{\rm s.t.} \quad {\LL}(x^*,\mu^*) > {\LL}(x,\mu^*),\label{e30}\\
  \exists \mu\in M \quad &{\rm s.t.}\quad {\LL}(x^*,\mu) >
  {\LL}(x^*,\mu^*).\label{e29}\end{align} Suppose first
that~\eqref{e30} holds. Then, there exists $\varsigma > 0$ such that
${\LL}(x^*,\mu^*) = {\LL}(x,\mu^*) +\varsigma$. Consider the sequences
of $\{x^{[i]}(k)\}$ and $\{\mu^{[i]}(k)\}$ which converge respectively to
$x^*$ and $\mu^*$ defined above. Notice that estimate~\eqref{e60} leads to
\begin{align*}
  &\sum_{i=1}^N\|x^{[i]}(k+1)-x\|^2 \leq \sum_{i=1}^N\|x^{[i]}(k)-x\|^2\ +
  \alpha(k)^2\sum_{i=1}^N
  \|{\DD}^{[i]}_x(k)\|^2\\
  &-2\alpha(k)\sum_{i=1}^N(A_i(k) + B_i(k) + C_i(k) + D_i(k) + E_i(k)
  + F_i(k)),
\end{align*}
where
\begin{align}
  &A_i(k) :=
  {\LL}^{[i]}(v^{[i]}_x(k),v^{[i]}_{\mu}(k))-{\LL}^{[i]}(\hat{x}(k),v^{[i]}_{\mu}(k)),\;\;
  B_i(k) := {\LL}^{[i]}(\hat{x}(k),v^{[i]}_{\mu}(k))-{\LL}^{[i]}(\hat{x}(k),\hat{\mu}(k)),\nnum\\
  &C_i(k) := {\LL}^{[i]}(\hat{x}(k),\hat{\mu}(k))-{\LL}^{[i]}(x^*,\hat{\mu}(k)),\;\; D_i(k) := {\LL}^{[i]}(x^*,\hat{\mu}(k))-{\LL}^{[i]}(x^*,\mu^*),\nnum\\
  &E_i(k) := {\LL}^{[i]}(x^*,\mu^*)-{\LL}^{[i]}(x,\mu^*),\;\; F_i(k) =
  {\LL}^{[i]}(x,\mu^*)-{\LL}^{[i]}(x,v^{[i]}_{\mu}(k)).\nnum
\end{align}
It follows from the Lipschitz continuity property of ${\LL}^{[i]}$; see
Lemma~\ref{lem11}, that
\begin{align}
  \|A_i(k)\|&\leq L\|v^{[i]}_x(k)-\hat{x}(k)\|\leq 2L\Delta_x(k),\;\;
  \|B_i(k)\|\leq R\|v^{[i]}_{\mu}(k)-\hat{\mu}(k)\|\leq 2R\Delta_{\mu}(k),\nnum\\
  \|C_i(k)\|&\leq L\|\hat{x}(k)-x^*\|\leq \frac{L}{N}\sum_{i=1}^N \|x^{[i]}(k) - x^*\|,\nnum\\
  \|D_i(k)\|&\leq R\|\hat{\mu}(k)-\mu^*\|\leq \frac{R}{N}\sum_{i=1}^N \|\mu^{[i]}(k) - \mu^*\|,\nnum\\
  \|F_i(k)\|&\leq R\|\mu^*-v^{[i]}_{\mu}(k)\|
\leq R\|\mu^*-\hat{\mu}(k)\| + R\|\hat{\mu}(k)-v^{[i]}_{\mu}(k)\|\nnum\\
&\leq \frac{R}{N}\sum_{i=1}^N\|\mu^*(k)-\mu^{[i]}(k)\|
  +2R\Delta_{\mu}(k).\nnum
\end{align}
Since $\displaystyle{\lim_{k\rightarrow+\infty}\|x^{[i]}(k)-x^*\| = 0}$,
$\displaystyle{\lim_{k\rightarrow+\infty}\|\mu^{[i]}(k) - \mu^*\| = 0}$,
$\displaystyle{\lim_{k\rightarrow+\infty}\Delta_x(k) = 0}$ and
$\displaystyle{\lim_{k\rightarrow+\infty}\Delta_{\mu}(k) = 0}$, then
all $A_i(k), B_i(k), C_i(k), D_i(k), F_i(k)$ converge to zero as
$k\rightarrow+\infty$. Then there exists $k_0\geq 0$ such that for all
$k\geq k_0$, it holds that
\begin{align*}
  \sum_{i=1}^N\|x^{[i]}(k+1)-x\|^2 \leq \sum_{i=1}^N\|x^{[i]}(k)-x\|^2 + N \alpha(k)^2 L^2 - \varsigma\alpha(k).
\end{align*}
Following a recursive argument, we have that for all $k\geq k_0$, it
holds that
\begin{align*}
  \sum_{i=1}^N\|x^{[i]}(k+1)-x\|^2 \leq\sum_{i=1}^N\|x^{[i]}(k_0)-x\|^2 +
  NL^2\sum_{\tau=k_0}^k\alpha(\tau)^2 -
  \varsigma\sum_{\tau=k_0}^k\alpha(\tau).
\end{align*}
Since $\sum_{k=k_0}^{+\infty}\alpha(k) = +\infty$ and
$\sum_{k=k_0}^{+\infty}\alpha(k)^2<+\infty$ and $x^{[i]}(k_0)\in X^{[i]}$,
$x\in X$ are bounded, the above estimate yields a contradiction by
taking $k$ sufficiently large. In other words,~\eqref{e30} cannot
hold. Following a parallel argument, one can show that~\eqref{e29}
cannot hold either. This ensures that each $(x^*,\mu^*)\in \Theta$ is
a saddle point of ${\LL}$ over $X\times M$.
\end{proof}

The combination of (c) in Lemmas~\ref{lem9} and Lemma~\ref{lem7} gives
that, for each $(x^*,\mu^*)\in\Theta$, we have that ${\LL}(x^*,\mu^*)
= p^*$ and $\mu^*$ is Lagrangian dual optimal. We still need to verify
that $x^*$ is a primal optimal solution. We are now in the position to
show Theorem~\ref{the1} based on the following two claims.

\noindent\textbf{Proofs of Theorem~\ref{the1}:}

\noindent \textbf{Claim 1: Each point $(x^*, \mu^*)\in\Theta$ is a
  point in $X^*\times D^*_L$.}

\begin{proof}
  The Lagrangian dual optimality of $\mu^*$ follows from (c) in
  Lemma~\ref{lem9} and Lemma~\ref{lem7}. To characterize the primal
  optimality of $x^*$, we define an auxiliary sequence $\{z(k)\}$ by
  $z(k) := \frac{\sum_{\tau =
      0}^{k-1}\alpha(\tau)\hat{x}(\tau)}{\sum_{\tau =
      0}^{k-1}\alpha(\tau)}$ which is a weighted version of the average of primal estimates. Since
  $\displaystyle{\lim_{k\rightarrow+\infty}\hat{x}(k) = x^*}$, it
  follows from Lemma~\ref{lem8} (b) that
  $\displaystyle{\lim_{k\rightarrow+\infty}z(k) = x^*}$.

  Since $(x^*,\mu^*)$ is a saddle point of $\LL$ over $X\times M$,
  then ${\LL}(x^*,\mu)\leq{\LL}(x^*,\mu^*)$ for any $\mu\in M$; i.e.,
  the following relation holds for any $\mu\in M$:
  \begin{align}
    g(x^*)^T(\mu-\mu^*) \leq 0.\label{e34}
  \end{align}
  Choose $\mu_a = \mu^* + \min_{i\in V}\theta^{[i]}
  \frac{\mu^*}{\|\mu^*\|}$ where $\theta^{[i]} > 0$ is given in the
  definition of $M^{[i]}$. Then $\mu_a\geq0$ and $\|\mu_a\| \leq \|\mu^*\|
  + \min_{i\in V}\theta^{[i]}$ implying $\mu_a\in M$. Letting $\mu =
  \mu_a$ in~\eqref{e34} gives that \begin{align*}\frac{\min_{i\in
      V}\theta^{[i]}}{\|\mu^*\|} g(x^*)^T\mu^* \leq 0.\end{align*} Since $\theta^{[i]} >
  0$, we have $g(x^*)^T\mu^* \leq 0$. On the other hand, we choose
  $\mu_b = \frac{1}{2}\mu^*$ and then $\mu_b\in M$. Letting $\mu =
  \mu_b$ in~\eqref{e34} gives that $-\frac{1}{2}g(x^*)^T\mu^* \leq 0$
  and thus $g(x^*)^T\mu^* \geq 0$. The combination of the above two
  estimates guarantees the property of $g(x^*)^T\mu^* = 0$.

  We now proceed to show $g(x^*)\leq0$ by contradiction. Assume that
  $g(x^*)\leq0$ does not hold. Denote $J^+(x^*) := \{1\leq \ell\leq m
  \; | \; g_{\ell}(x^*) > 0\} \neq \emptyset$ and $\eta :=
  \min_{\ell\in J^+(x^*)}\{g_{\ell}(x^*)\}$.  Then $\eta > 0$. Since
  $g$ is continuous and $v^{[i]}_x(k)$ converges to $x^*$, there exists
  $K\geq0$ such that $g_{\ell}(v^{[i]}_x(k)) \geq \frac{\eta}{2}$ for all
  $k\geq K$ and all $\ell\in J^+(x^*)$. Since $v^{[i]}_{\mu}(k)$ converges
  to $\mu^*$, without loss of generality, we say that $\|v^{[i]}_{\mu}(k)
  - \mu^*\| \leq \frac{1}{2}\min_{i\in V}\theta^{[i]}$ for all $k\geq
  K$. Choose $\hat{\mu}$ such that $\hat{\mu}_{\ell} = \mu^*_{\ell}$
  for $\ell\notin J^+(x^*)$ and $\hat{\mu}_{\ell} = \mu^*_{\ell} +
  \frac{1}{\sqrt{m}}\min_{i\in V}\theta^{[i]}$ for $\ell\in J^+(x^*)$. Since $\mu^*\geq0$
  and $\theta^{[i]} > 0$, thus $\hat{\mu}\geq0$. Furthermore,
  $\|\hat{\mu}\| \leq \|\mu^*\| + \min_{i\in V}\theta^{[i]}$, then
  $\hat{\mu} \in M$. Equating $\mu$ to $\hat{\mu}$ and letting
  ${\DD}_{\mu}^{[i]}(k) = g(v^{[i]}_x(k))$ in the estimate~\eqref{e23}, the
  following holds for $k\geq K$:
\begin{align}
  &N |J^+(x^*)| \eta\min_{i\in V}\theta^{[i]}\alpha(k) \leq
  2\alpha(k)\sum_{i=1}^N\sum_{\ell\in
    J^+(x^*)}g_{\ell}(v^{[i]}_x(k))(\hat{\mu}-v^{[i]}_{\mu}(k))_{\ell}\nnum\\&\leq
  \sum_{i=1}^N \|{\mu}^{[i]}(k) -\hat{\mu}\|^2 - \sum_{i=1}^N \|\mu^{[i]}(k+1)-\hat{\mu}\|^2 + NR^2\alpha(k)^2\nnum\\
  & - 2\alpha(k)\sum_{i=1}^N\sum_{\ell\notin
    J^+(x^*)}g_{\ell}(v^{[i]}_x(k))(\hat{\mu}-v^{[i]}_{\mu}(k))_{\ell}.\label{e31}
\end{align}

Summing~\eqref{e31} over $[K, k-1]$ with $k\geq K+1$, dividing by
$\sum_{\tau=K}^{k-1}\alpha(\tau)$ on both sides, and using $-
\sum_{i=1}^N \|\mu^{[i]}(k) - \hat{\mu}\|^2 \leq 0$, we obtain
\begin{align}
  &N |J^+(x^*)| \eta\min_{i\in V}\theta^{[i]}\leq \frac{1}{\sum_{\tau=K}^{k-1}\alpha(\tau)} \{\sum_{i=1}^N
  \|{\mu}^{[i]}(K) -\hat{\mu}\|^2 +
  NR^2\sum_{\tau=K}^{k-1}\alpha(\tau)^2\nnum\\&-\sum_{\tau=K}^{k-1}2\alpha(\tau)\sum_{i=1}^N\sum_{\ell\notin
    J^+(x^*)}g_{\ell}(v^{[i]}_x(\tau))(\hat{\mu}-v^{[i]}_{\mu}(\tau))_{\ell}\}.\label{e20}
\end{align}

Since ${\mu}^{[i]}(K)\in M^{[i]}$, $\hat{\mu}\in M$ are bounded and
$\sum_{\tau = K}^{+\infty}\alpha(\tau) = +\infty$, then the limit of
the first term on the right hand side of~\eqref{e20} is zero as
$k\rightarrow+\infty$. Since $\sum_{\tau = K}^{+\infty}\alpha(\tau)^2
< +\infty$, then the limit of the second term is zero as
$k\rightarrow+\infty$. Since
$\displaystyle{\lim_{k\rightarrow+\infty}v^{[i]}_x(k) = x^*}$ and
$\displaystyle{\lim_{k\rightarrow+\infty}v^{[i]}_{\mu}(k) = \mu^*}$, thus
$\displaystyle{\lim_{k\rightarrow+\infty}2\sum_{i=1}^N\sum_{\ell\notin
    J^+(x^*)}g_{\ell}(v^{[i]}_x(k))(\hat{\mu}-v^{[i]}_{\mu}(k))_{\ell} =
  0}$. Then it follows from Lemma~\ref{lem8} (b) that then the limit
of the third term is zero as $k\rightarrow+\infty$. Then we have $N
|J^+(x^*)| \eta\min_{i\in V}\theta^{[i]}\leq 0$. Recall that $|J^+(x^*)| >
0$, $\eta > 0$ and $\theta^{[i]} > 0$. Then we reach a contradiction,
implying that $g(x^*)\leq0$.

Since $x^*\in X$ and $g(x^*)\leq 0$, then $x^*$ is a feasible solution
and thus $f(x^*)\geq p^*$. On the other hand, since $z(k)$ is a convex
combination of $\hat{x}(0),\cdots,\hat{x}(k-1)$ and $f$ is convex,
thus we have the following estimate:
\begin{align}
  f(z(k)) \leq \frac{\sum_{\tau=0}^{k-1}\alpha(\tau)f(\hat{x}(\tau))}{\sum_{\tau=0}^{k-1}\alpha(\tau)}
  =
  \frac{1}{\sum_{\tau=0}^{k-1}\alpha(\tau)}\{\sum_{\tau=0}^{k-1}\alpha(\tau){\LL}(\hat{x}(\tau),
  \hat{\mu}(\tau))
  -\sum_{\tau=0}^{k-1}N\alpha(\tau)\hat{\mu}(\tau)^Tg(\hat{x}(\tau))\}.\nnum
\end{align}

Recall the following convergence properties: \begin{align*}\lim_{k\rightarrow+\infty}z(k) = x^*,\quad
\lim_{k\rightarrow+\infty}{\LL}(\hat{x}(k), \hat{\mu}(k)) = {\LL}(x^*,
\mu^*) = p^*,\quad
\lim_{k\rightarrow+\infty}\hat{\mu}(k)^Tg(\hat{x}(k)) = g(x^*)^T\mu^*
= 0.\end{align*} It follows from Lemma~\ref{lem8} (b) that $f(x^*) \leq p^*.$
Therefore, we have $f(x^*) = p^*$, and thus $x^*$ is a primal optimal
point.
\end{proof}

\noindent \textbf{Claim 2: It holds that
  $\displaystyle{\lim_{k\rightarrow+\infty}\|y^{[i]}(k) - p^*\| = 0}$.}

\begin{proof}
  The following can be proven by induction on $k$ for a fixed
  $k'\geq1$:
  \begin{align}
    \sum_{i=1}^N y^{[i]}(k+1) = \sum_{i=1}^N y^{[i]}(k') + N \sum_{\ell=k'}^{k} \sum_{i=1}^N (f^{[i]}(x^{[i]}(\ell)) -
    f^{[i]}(x^{[i]}(\ell-1))).\label{e41}
  \end{align}
  Let $k'=1$ in~\eqref{e41} and recall that initial state $y^{[i]}(1) = N
  f^{[i]}(x^{[i]}(0))$ for all $i\in V$. Then we have
\begin{align}
  \sum_{i=1}^N y^{[i]}(k+1) &= \sum_{i=1}^N y^{[i]}(1) + N\sum_{i=1}^N
  (f^{[i]}(x^{[i]}(k)) - f^{[i]}(x^{[i]}(0))) = N\sum_{i=1}^N f^{[i]}(x^{[i]}(k)).\label{e45}
\end{align}
The combination of~\eqref{e45} with
$\displaystyle{\lim_{k\rightarrow+\infty}\|y^{[i]}(k)-y^{[j]}(k)\| = 0}$ gives
the desired result. We then finish the proofs of Theorem~\ref{the1}.
\end{proof}

\subsection{Proofs of Theorem~\ref{the3}}

In this part, we present the proofs of Theorem~\ref{the3}. In order to analyze the DPPDS algorithm, we first rewrite it into the
following form:
\begin{align}
  & \mu^{[i]}(k+1) = v^{[i]}_{\mu}(k) + u^{[i]}_{\mu}(k),\quad \lambda^{[i]}(k+1) =
  v^{[i]}_{\lambda}(k) + u^{[i]}_{\lambda}(k), \nnum\\ & x^{[i]}(k+1) = v^{[i]}_x(k) +
  e^{[i]}_x(k),\quad y^{[i]}(k+1) = v^{[i]}_{y}(k) + u^{[i]}_y(k),\nnum
\end{align}
where $e^{[i]}_x(k)$ is projection error described by
\begin{align*}
  e^{[i]}_x(k) &:= P_X[v^{[i]}_x(k) - \alpha(k) {\mathcal{S}}_x^{[i]}(k)] -
  v^{[i]}_x(k),
\end{align*}
and $u^{[i]}_{\mu}(k) := \alpha(k)[g(v^{[i]}_x(k))]^+$, $u^{[i]}_{\lambda}(k) :=
\alpha(k)|h(v^{[i]}_x(k))|$, $u^{[i]}_y(k) = N(f^{[i]}(x^{[i]}(k))-f^{[i]}(x^{[i]}(k-1)))$ are
some local inputs. Denote by the maximum deviations of dual estimates $M_{\mu}(k) := \max_{i\in V}\|\mu^{[i]}(k)\|$
and $M_{\lambda}(k) := \max_{i\in V}\|\lambda^{[i]}(k)\|$. We further denote by the averages of primal and dual estimates $\hat{x}(k) := \frac{1}{N}\sum_{i=1}^N x^{[i]}(k)$,
$\hat{\mu}(k) := \frac{1}{N}\sum_{i=1}^N \mu^{[i]}(k)$ and
$\hat{\lambda}(k) := \frac{1}{N}\sum_{i=1}^N \lambda^{[i]}(k)$.

Before showing Lemma~\ref{lem6}, we present some useful facts. Since $X$ is compact,
and $f^{[i]}$, $[g(\cdot)]^+$ and $h$ are continuous, there exist $F, G^+,
H > 0$ such that for all $x\in X$, it holds that $\|f^{[i]}(x)\| \leq F$
for all $i\in V$, $\|[g(x)]^+\| \leq G^+$ and $\|h(x)\|\leq H$. Since
$X$ is a compact set and $f^{[i]}$, $[g_{\ell}(\cdot)]^+$,
$|h_{\ell}(\cdot)|$ are convex, then it follows from Proposition 5.4.2
in~\cite{DPB:09} that there exist $D_F, {D_{G^+}}, D_H > 0$ such that
for all $x\in X$, it holds that $\|{\DD} f^{[i]}(x)\| \leq D_F$ ($i\in
V$), $m\|{\DD} [g_{\ell}(x)]^+\| \leq {D_{G^+}}$ ($1\leq \ell \leq m$)
and $\nu\|{\DD}|h_{\ell}|(x)\| \leq D_H$ ($1\leq \ell \leq
\nu$).

\begin{lemma}[Diminishing and summable properties]
  Suppose the balanced communication assumption~\ref{asm3} and the
  step-size assumption~\ref{asm6} hold.

  (a) It holds that
  $\displaystyle{\lim_{k\rightarrow+\infty}\alpha(k)M_{\mu}(k) = 0}$,
  $\displaystyle{\lim_{k\rightarrow+\infty}\alpha(k)M_{\lambda}(k) =
    0}$,
  $\displaystyle{\lim_{k\rightarrow+\infty}\alpha(k)\|{\mathcal{S}}^{[i]}_x(k)\|
    = 0}$, and the sequences of $\{\alpha(k)^2M^2_{\mu}(k)\}$,
  $\{\alpha(k)^2M^2_{\lambda}(k)\}$ and
  $\{\alpha(k)^2\|{\mathcal{S}}_x^{[i]}(k)\|^2\}$ are summable.

  (b) The sequences $\{\alpha(k)\|\hat{\mu}(k) - v^{[i]}_{\mu}(k)\|\}$,
  $\{\alpha(k)\|\hat{\lambda}(k) - v^{[i]}_{\lambda}(k)\|\}$,
  $\{\alpha(k)M_{\mu}(k)\|\hat{x}(k) - v^{[i]}_x(k)\|\}$,
  $\{\alpha(k)M_{\lambda}(k)\|\hat{x}(k) - v^{[i]}_x(k)\|\}$ and
  $\{\alpha(k)\|\hat{x}(k) - v^{[i]}_x(k)\|\}$ are summable.\label{lem6}
\end{lemma}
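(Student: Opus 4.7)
The plan is to prove (a) first, since (b) depends on growth estimates for $M_{\mu}$, $M_{\lambda}$ and pointwise bounds on $\mathcal{S}_x^{[i]}$ that follow from (a). For (a), I would first extract a linear growth estimate on the dual maxima. The update $\mu^{[i]}(k+1) = v^{[i]}_\mu(k) + \alpha(k)[g(v^{[i]}_x(k))]^+$ together with double stochasticity, the triangle inequality, and the uniform bound $\|[g(x)]^+\|\leq G^+$ on the compact set $X$ gives $M_\mu(k+1)\leq M_\mu(k) + \alpha(k) G^+$ and hence $M_\mu(k)\leq M_\mu(0) + G^+ s(k-1)$; an identical argument yields $M_\lambda(k)\leq M_\lambda(0) + H\, s(k-1)$. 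The step-size assumption~\ref{asm6} — in particular $\lim_{k\to\infty}\alpha(k+1)s(k)=0$, $\sum \alpha(k+1)^2 s(k)^2 < +\infty$, and $\sum \alpha(k)^2 < +\infty$ — immediately delivers $\alpha(k)M_\mu(k)\to 0$, $\alpha(k)M_\lambda(k)\to 0$, and (after expanding the square) summability of $\alpha(k)^2 M_\mu(k)^2$ and $\alpha(k)^2 M_\lambda(k)^2$. Combining the subgradient bounds $\|{\DD}f^{[i]}\|\leq D_F$, $m\|{\DD}[g_\ell]^+\|\leq D_{G^+}$, $\nu\|{\DD}|h_\ell|\,\|\leq D_H$ with Cauchy--Schwarz on the multiplier sums inside $\mathcal{S}_x^{[i]}(k)$ yields an estimate of the form $\|\mathcal{S}_x^{[i]}(k)\|\leq C_0 + C_1 M_\mu(k) + C_2 M_\lambda(k)$, which propagates the above conclusions to $\alpha(k)\|\mathcal{S}_x^{[i]}(k)\|$ and $\alpha(k)^2\|\mathcal{S}_x^{[i]}(k)\|^2$.

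For (b) I would rely on the dynamic average consensus estimate recorded as Proposition~\ref{pro1} in the Appendix, which, under assumptions~\ref{asm2},~\ref{asm3} and~\ref{asm1}, bounds a consensus error by a geometric convolution of past local inputs. Writing the dual iterate as $\mu^{[i]}(k+1)=v_\mu^{[i]}(k)+u_\mu^{[i]}(k)$ with $\|u_\mu^{[i]}(k)\|\leq \alpha(k) G^+$, the Appendix result provides constants $C>0$ and $\beta\in(0,1)$ with
\begin{equation*}
\|\hat\mu(k)-v_\mu^{[i]}(k)\|\leq C\sum_{\tau=0}^{k-1}\beta^{k-1-\tau}\alpha(\tau),
\end{equation*}
and symmetrically for $\lambda$. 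Multiplying by $\alpha(k)$, summing over $k$, swapping the order, and using $\alpha(k)\alpha(\tau)\leq \tfrac{1}{2}(\alpha(k)^2+\alpha(\tau)^2)$ together with $\sum_{k\geq\tau}\beta^{k-\tau}\leq (1-\beta)^{-1}$ reduces the claim to $\sum \alpha(k)^2 < +\infty$. For the primal deviation, $\|e_x^{[i]}(\tau)\|\leq \alpha(\tau)\|\mathcal{S}_x^{[i]}(\tau)\|\leq C\alpha(\tau)(1+s(\tau))$ by (a), so Proposition~\ref{pro1} gives $\|\hat x(k)-v_x^{[i]}(k)\|\leq C\sum_{\tau<k}\beta^{k-1-\tau}\alpha(\tau)(1+s(\tau))$. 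Summability of $\alpha(k)\|\hat x(k)-v_x^{[i]}(k)\|$ then follows by the same Young-type argument. For the two harder terms $\alpha(k)M_\mu(k)\|\hat x(k)-v_x^{[i]}(k)\|$ and $\alpha(k)M_\lambda(k)\|\hat x(k)-v_x^{[i]}(k)\|$, substituting $M_\mu(k)\leq M_\mu(0)+G^+ s(k-1)$ and swapping sums reduces matters to bounding expressions of the form $\sum_k\sum_{\tau<k}\beta^{k-1-\tau}\alpha(k)\alpha(\tau)(1+s(k-1))(1+s(\tau))$; applying $ab\leq \tfrac{1}{2}(a^2+b^2)$ with $a=\alpha(k)(1+s(k-1))$ and $b=\alpha(\tau)(1+s(\tau))$ and summing the geometric tail reduces everything to $\sum \alpha(k)^2(1+s(k))^2$, which expands into $\sum \alpha(k)^2$, $\sum \alpha(k+1)^2 s(k)$ and $\sum \alpha(k+1)^2 s(k)^2$ — all finite by assumption~\ref{asm6}.

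The main obstacle is precisely the joint growth of dual iterates and subgradients: the absence of a dual projection means $M_\mu(k)$ and $M_\lambda(k)$ are controlled only by the recursive estimate $O(s(k))$, and this $s(k)$ factor is then dragged through the subgradient $\mathcal{S}_x^{[i]}(k)$ and through every subsequent consensus convolution. This is the reason the step-size assumption~\ref{asm6} is strictly stronger than the standard diminishing scheme used for the DLPDS algorithm; its additional conditions $\lim_k \alpha(k+1)s(k)=0$, $\sum \alpha(k+1)^2 s(k)<+\infty$ and $\sum \alpha(k+1)^2 s(k)^2 < +\infty$ are tuned exactly to absorb the $s(k)$ and $s(k)^2$ factors that appear after convolving the geometric consensus kernel against the input bound $\alpha(\tau)(1+s(\tau))$.
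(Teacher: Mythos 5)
Your proposal is correct and follows essentially the same route as the paper: part (a) via the linear-growth recursions $M_{\mu}(k)\leq M_{\mu}(0)+G^{+}s(k-1)$, $M_{\lambda}(k)\leq M_{\lambda}(0)+Hs(k-1)$ and the bound $\|{\mathcal{S}}^{[i]}_x(k)\|\leq D_F+D_{G^+}M_{\mu}(k)+D_HM_{\lambda}(k)$ combined with the step-size assumption, and part (b) via the geometric-convolution consensus estimate, the Young inequality $ab\leq\tfrac12(a^2+b^2)$, and summation of the geometric tail (the paper packages this last double-sum step as Lemma 7 of the Nedic--Ozdaglar--Parrilo reference, which is the same computation you carry out by exchanging the order of summation). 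One small correction: the quantitative convolution bound you need is Lemma~\ref{lem4} in the Appendix, not Proposition~\ref{pro1}, which only gives the qualitative consensus statement.
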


\begin{proof}
  (a) Notice that
  \begin{align*}
    \|v^{[i]}_{\mu}(k)\| = \|\sum_{j=1}^N a^i_j(k)\mu^{[j]}(k)\| \leq
    \sum_{j=1}^N a^i_j(k)\|\mu^{[j]}(k)\|\leq \sum_{j=1}^N
    a^i_j(k)M_{\mu}(k) = M_{\mu}(k),
  \end{align*}
  where in the last equality we use the balanced communication
  assumption~\ref{asm3}. Recall that $v^{[i]}_x(k)\in X$. This implies
  that the following inequalities hold for all $k\geq0$:
  \begin{align}
    \|\mu^{[i]}(k+1)\| \leq \|v^{[i]}_{\mu}(k)+\alpha(k)[g(v^{[i]}_x(k))]^+\| \leq
    \|v^{[i]}_{\mu}(k)\|+G^+\alpha(k)\leq M_{\mu}(k)+G^+\alpha(k).\nnum
  \end{align}
  From here, then we deduce the following recursive estimate on
  $M_{\mu}(k+1)$: $M_{\mu}(k+1) \leq
  M_{\mu}(k)+G^+\alpha(k)$. Repeatedly applying the above estimates
  yields that
  \begin{align}
    M_{\mu}(k+1) &\leq M_{\mu}(0)+G^+ s(k)\label{e52}.
  \end{align}
  Similar arguments can be employed to show that
  \begin{align}
    M_{\lambda}(k+1) &\leq M_{\lambda}(0) + H s(k).\label{e53}
  \end{align}
  Since $\displaystyle{\lim_{k\rightarrow+\infty}\alpha(k+1)s(k) = 0}$
  and $\displaystyle{\lim_{k\rightarrow+\infty}\alpha(k) = 0}$, then
  we know that
  $\displaystyle{\lim_{k\rightarrow+\infty}\alpha(k+1)M_{\mu}(k+1) =
    0}$ and
  $\displaystyle{\lim_{k\rightarrow+\infty}\alpha(k+1)M_{\lambda}(k+1)
    = 0}$. Notice that the following estimate on
  ${\mathcal{S}}^{[i]}_x(k)$ holds:
  \begin{align}
    \|{\mathcal{S}}^{[i]}_x(k)\|&\leq D_F + {D_{G^+}}M_{\mu}(k)
    +D_HM_{\lambda}(k).\label{e54}
  \end{align}
  Recall that $\displaystyle{\lim_{k\rightarrow+\infty}\alpha(k) =
    0}$, $\displaystyle{\lim_{k\rightarrow+\infty}\alpha(k)M_{\mu}(k)
    = 0}$ and
  $\displaystyle{\lim_{k\rightarrow+\infty}\alpha(k)M_{\lambda}(k) =
    0}$. Then the result of
  $\displaystyle{\lim_{k\rightarrow+\infty}\alpha(k)\|{\mathcal{S}}^{[i]}_x(k)\|
    = 0}$ follows. By~\eqref{e52}, we have
  \begin{align*}
    \sum_{k=0}^{+\infty}\alpha(k)^2M^2_{\mu}(k)\leq
    \alpha(0)^2M_{\mu}^2(0)+
    \sum_{k=1}^{+\infty}\alpha(k)^2(M_{\mu}(0)+G^+ s(k-1))^2.\nnum
  \end{align*}
  It follows from the step-size assumption~\ref{asm6} that
  $\sum_{k=0}^{+\infty}\alpha(k)^2M^2_{\mu}(k) < +\infty$. Similarly,
  one can show that $\sum_{k=0}^{+\infty}\alpha(k)^2M^2_{\lambda}(k) <
  +\infty$. By using~\eqref{e52},~\eqref{e53} and~\eqref{e54}, we have the
  following estimate:
  \begin{align}
    &\sum_{k=0}^{+\infty}\alpha(k)^2\|{\mathcal{S}}_x^{[i]}(k)\|^2
    \leq \alpha(0)^2(D_F + {D_{G^+}}M_{\mu}(0) +
    D_HM_{\lambda}(0))^2\nnum\\& + \sum_{k=1}^{+\infty}
    \alpha(k)^2(D_F + {D_{G^+}}(M_{\mu}(0)+G^+s(k-1)) +
    D_H(M_{\lambda}(0) + Hs(k-1)))^2.\nnum
  \end{align}
  Then the summability of $\{\alpha(k)^2\}$, $\{\alpha(k+1)^2s(k)\}$
  and $\{\alpha(k+1)^2s(k)^2\}$ verifies that of
  $\{\alpha(k)^2\|{\mathcal{S}}_x^{[i]}(k)\|^2\}$.

  (b) Consider the dynamics of $\mu^{[i]}(k)$ which is in the same form
  as the distributed projected subgradient algorithm in~\cite{AN-AO-PAP:08}. Recall that $\{[g(v^{[i]}_x(k))]^+\}$ is uniformly bounded. Then following from Lemma~\ref{lem4} in the Appendix with
  $Z = {\real}^m_{\geq0}$ and $d^{[i]}(k) = -[g(v^{[i]}_x(k))]^+$, we have the
  summability of $\{\alpha(k)\max_{i\in V}\|\hat{\mu}(k) -
  {\mu}^{[i]}(k)\|\}$. Then $\{\alpha(k)\|\hat{\mu}(k) - v^{[i]}_{\mu}(k)\|\}$
  is summable by using the following set of inequalities:
\begin{align}
  \|\hat{\mu}(k) - v^{[i]}_{\mu}(k)\| \leq \sum_{j=1}^N
  a^i_j(k)\|\hat{\mu}(k) - {\mu}^{[j]}(k)\| \leq \max_{i\in V}
  \|\hat{\mu}(k) - {\mu}^{[i]}(k)\|,\label{e57}
\end{align}
where we use $\sum_{j=1}^N a^i_j(k) = 1$. Similarly, it holds that
$\sum_{k=0}^{+\infty} \alpha(k)\|\hat{\lambda}(k) - v^{[i]}_{\lambda}(k)\|
< +\infty$.

We now consider the evolution of $x^{[i]}(k)$. Recall that $v^{[i]}_x(k)\in
X$. By Lemma~\ref{lem5} with $Z = X$, $z =
v^{[i]}_x(k)-\alpha(k){\mathcal{S}}^{[i]}_x(k)$ and $y = v^{[i]}_x(k)$, we have
\begin{align}
  &\|x^{[i]}(k+1) - v^{[i]}_x(k)\|^2\leq \|v^{[i]}_x(k)-\alpha(k){\mathcal{S}}^{[i]}_x(k)-v^{[i]}_x(k)\|^2\nnum\\
  &- \|x^{[i]}(k+1)-(v^{[i]}_x(k)-\alpha(k){\mathcal{S}}^{[i]}_x(k))\|^2,\nnum
\end{align}
and thus $\|e^{[i]}_x(k) + \alpha(k){\mathcal{S}}^{[i]}_x(k)\|\leq
\alpha(k)\|{\mathcal{S}}^{[i]}_x(k)\|$. With this relation, from
Lemma~\ref{lem4} with $Z = X$ and $d^{[i]}(k) = {\mathcal{S}}^{[i]}_x(k)$, the
following holds for some $\gamma >0$ and $0<\beta<1$:
\begin{align}
  \|x^{[i]}(k) - \hat{x}(k)\|\leq N \gamma
  \beta^{k-1}\sum_{i=0}^N\|x^{[i]}(0)\| +
  2N\gamma\sum_{\tau=0}^{k-1}\beta^{k-\tau}\alpha(\tau)\|{\mathcal{S}}^{[i]}_x(\tau)\|.\label{e39}
\end{align}

Multiplying both sides of~\eqref{e39} by $\alpha(k)M_{\mu}(k)$ and
using~\eqref{e54}, we obtain
\begin{align}
  \alpha(k)M_{\mu}(k)\|x^{[i]}(k) - \hat{x}(k)\|
  &\leq N \gamma \sum_{i=0}^N\|x^{[i]}(0)\|\alpha(k)M_{\mu}(k)\beta^{k-1} + 2N\gamma\alpha(k)M_{\mu}(k)\nnum\\
  &\times\sum_{\tau=0}^{k-1}\beta^{k-\tau}\alpha(\tau)(D_F + {D_{G^+}}
  M_{\mu}(\tau) + D_H M_{\lambda}(\tau)).\nnum
\end{align}

Notice that the above inequalities hold for all $i\in V$. Then by
employing the relation of $ab\leq \frac{1}{2}(a^2+b^2)$ and regrouping
similar terms, we obtain
\begin{align}
  &\alpha(k)M_{\mu}(k)\max_{i\in V}\|x^{[i]}(k) - \hat{x}(k)\|\leq N\gamma\big(\frac{1}{2}\sum_{i=0}^N\|x^{[i]}(0)\|+(D_F+{D_{G^+}}+D_H)\sum_{\tau=0}^{k-1}\beta^{k-\tau}\big)\nnum\\
  &\times\alpha(k)^2M_{\mu}^2(k)+\frac{1}{2}N \gamma
  \sum_{i=0}^N\|x^{[i]}(0)\|\beta^{2(k-1)}
  +N\gamma\sum_{\tau=0}^{k-1}\beta^{k-\tau}\alpha(\tau)^2(D_F+{D_{G^+}}M_{\mu}^2(\tau)+D_HM_{\lambda}^2(\tau)).\nnum
\end{align}

Part (a) gives that $\{\alpha(k)^2M_{\mu}^2(k)\}$ is
summable. Combining this fact with $\sum_{\tau=0}^{k-1}\beta^{k-\tau}
\leq \sum_{k=0}^{+\infty}\beta^k = \frac{1}{1-\beta}$, then we can say
that the first term on the right-hand side in the above estimate is
summable. It is easy to check that the second term is also
summable. It follows from Part (a) that
$\displaystyle{\lim_{k\rightarrow+\infty}\alpha(k)^2(D_F+{D_{G^+}}M_{\mu}^2(k)+D_HM_{\lambda}^2(k))
  = 0}$ and
$\{\alpha(k)^2(D_F+{D_{G^+}}M_{\mu}^2(k)+D_HM_{\lambda}^2(k))\}$ is
summable. Then Lemma 7 in~\cite{AN-AO-PAP:08} with $\gamma_{\ell} =
N\gamma
\alpha(\ell)^2(D_F+{D_{G^+}}M_{\mu}^2(\ell)+D_HM_{\lambda}^2(\ell))$
ensures that the third term is summable. Therefore, the summability of
$\{\alpha(k)M_{\mu}(k)\max_{i\in V}\|x^{[i]}(k) - \hat{x}(k)\|\}$ is
guaranteed. Following the same lines in~\eqref{e57}, one can show the
summability of $\{\alpha(k)M_{\mu}(k)\|v^{[i]}_x(k) -
\hat{x}(k)\|\}$. Following analogous arguments, we have that
$\{\alpha(k)M_{\lambda}(k)\|v^{[i]}_x(k) - \hat{x}(k)\|\}$ and
$\{\alpha(k)\|v^{[i]}_x(k) - \hat{x}(k)\|\}$ are summable.
\end{proof}

\begin{remark} In Lemma~\ref{lem6}, the assumption of all local constraint sets being identical is utilized to find an upper bound of the convergence rate of $\|\hat{x}(k) - v^{[i]}_x(k)\|$ to zero. This property is crucial to establish the summability of expansions pertaining to $\|\hat{x}(k) - v^{[i]}_x(k)\|$ in part (b). \oprocend\label{rem4}
\end{remark}

The following is a basic iteration relation of the DPPDS algorithm.

\begin{lemma}[Basic iteration relation] The following estimates hold
  for any $x\in X$ and
  $(\mu,\lambda)\in{\real}^m_{\geq0}\times{\real}^{\nu}_{\geq0}$:
\begin{align}
  &\sum_{i=1}^N \|e^{[i]}_x(k)+\alpha(k) {\mathcal{S}}_x^{[i]}(k)\|^2 \leq \sum_{i=1}^N \alpha(k)^2\|{\mathcal{S}}_x^{[i]}(k)\|^2\nnum\\
  &-\sum_{i=1}^N2\alpha(k)({\HH}^{[i]}(v^{[i]}_x(k),v_{\mu}^{[i]}(k),v_{\lambda}^{[i]}(k))
  -{\HH}^{[i]}(x,v_{\mu}^{[i]}(k),v_{\lambda}^{[i]}(k)))\nnum\\
  &+\sum_{i=1}^N(\|x^{[i]}(k)-x\|^2 - \|x^{[i]}(k+1)-x\|^2),\label{e27}
\end{align}
and,
\begin{align}
  &0\leq \sum_{i=1}^N(\|\mu^{[i]}(k)-{\mu}\|^2 - \|\mu^{[i]}(k+1)-{\mu}\|^2) + \sum_{i=1}^N(\|\lambda^{[i]}(k)-{\lambda}\|^2 - \|\lambda^{[i]}(k+1)-{\lambda}\|^2)+\nnum\\
  &\sum_{i=1}^N2\alpha(k)({\HH}^{[i]}(v^{[i]}_x(k),v_{\mu}^{[i]}(k),v_{\lambda}^{[i]}(k))
  -{\HH}^{[i]}(v^{[i]}_x(k),{\mu},\lambda))+
  \sum_{i=1}^N\alpha(k)^2(\|[g(v^{[i]}_x(k))]^+\|^2+\|h(v^{[i]}_x(k))\|^2)\label{e28}.
\end{align}\end{lemma}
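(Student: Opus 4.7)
The plan is to mirror the argument of Lemma~\ref{lem2} (the basic iteration relation for the DLPDS algorithm) while adjusting for two essential differences: the primal subgradient is now $\mathcal{S}^{[i]}_x(k)$ arising from the penalty function $\mathcal{H}$ rather than the Lagrangian $\mathcal{L}$, and the dual updates are no longer projected but driven by the local inputs $\alpha(k)[g(v^{[i]}_x(k))]^+$ and $\alpha(k)|h(v^{[i]}_x(k))|$. Throughout, the pivotal ingredient on the primal side is the non-expansive projection inequality of Lemma~\ref{lem5}, while on the dual side we exploit the penalty supgradient inequality recorded in Remark~\ref{rem8}.

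For the first inequality, I would first apply Lemma~\ref{lem5} with $Z = X$, $z = v^{[i]}_x(k)-\alpha(k)\mathcal{S}^{[i]}_x(k)$ and $y = x\in X$, and then expand $\|v^{[i]}_x(k)-\alpha(k)\mathcal{S}^{[i]}_x(k)-x\|^2$ to isolate the cross term $-2\alpha(k)\mathcal{S}^{[i]}_x(k)^T(v^{[i]}_x(k)-x)$. Since $\mathcal{S}^{[i]}_x(k)$ is a subgradient of $\mathcal{H}^{[i]}_{w^{[i]}(k)}$ at $v^{[i]}_x(k)$, the subgradient inequality upper-bounds this cross term by $2\alpha(k)(\mathcal{H}^{[i]}(x,v^{[i]}_\mu(k),v^{[i]}_\lambda(k)) - \mathcal{H}^{[i]}(v^{[i]}_x(k),v^{[i]}_\mu(k),v^{[i]}_\lambda(k)))$. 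Summing over $i$ and using convexity of $\|\cdot\|^2$ together with the balanced communication assumption~\ref{asm3}, which gives $\sum_i \|v^{[i]}_x(k)-x\|^2 \leq \sum_i\sum_j a^i_j(k)\|x^{[j]}(k)-x\|^2 = \sum_i \|x^{[i]}(k)-x\|^2$ via the double stochasticity of $A(k)$, produces the desired bound~\eqref{e27}.

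For the second inequality, since no dual projection is applied, I would simply compute
\begin{align*}
\|\mu^{[i]}(k+1)-\mu\|^2 &= \|v^{[i]}_\mu(k)-\mu\|^2 + \alpha(k)^2\|[g(v^{[i]}_x(k))]^+\|^2 + 2\alpha(k)[g(v^{[i]}_x(k))]^+{}^T(v^{[i]}_\mu(k)-\mu),\\
\|\lambda^{[i]}(k+1)-\lambda\|^2 &= \|v^{[i]}_\lambda(k)-\lambda\|^2 + \alpha(k)^2\|h(v^{[i]}_x(k))\|^2 + 2\alpha(k)|h(v^{[i]}_x(k))|^T(v^{[i]}_\lambda(k)-\lambda).
\end{align*}
Adding these two identities and then invoking the penalty supgradient inequality~\eqref{e56} from Remark~\ref{rem8} (with the signs flipped) upper-bounds the two cross terms jointly by $2\alpha(k)(\mathcal{H}^{[i]}(v^{[i]}_x(k),v^{[i]}_\mu(k),v^{[i]}_\lambda(k)) - \mathcal{H}^{[i]}(v^{[i]}_x(k),\mu,\lambda))$. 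Summing over $i$ and applying, once more, convexity of $\|\cdot\|^2$ together with double stochasticity to pass from $\sum_i\|v^{[i]}_\mu(k)-\mu\|^2$ to $\sum_i\|\mu^{[i]}(k)-\mu\|^2$ (and analogously for $\lambda$), then rearranging, yields~\eqref{e28}.

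The main obstacle, which in practice amounts only to bookkeeping rather than a deep difficulty, is keeping the $\mu$ and $\lambda$ contributions coupled through a single supgradient inequality: one must treat $w=(\mu,\lambda)$ as a single concave variable for $\mathcal{H}^{[i]}_x$ so that a single application of~\eqref{e56} simultaneously handles both cross terms, rather than splitting the argument. Once this is done, the remaining manipulations are exactly parallel to those in the proof of Lemma~\ref{lem2}.
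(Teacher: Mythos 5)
Your proposal is correct and follows exactly the route the paper intends: the paper's own proof is a one-line remark that the argument is analogous to Lemma~\ref{lem2}, and you have spelled out that analogy faithfully, applying Lemma~\ref{lem5} with $Z=X$ on the primal side, expanding the unprojected dual updates directly, and invoking the joint penalty supgradient inequality~\eqref{e56} together with double stochasticity. The details check out, including the correct observation that the $\mu$ and $\lambda$ cross terms must be handled by a single application of~\eqref{e56} with $w=(\mu,\lambda)$ treated as one concave variable.
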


\begin{proof}
  One can finish the proof by following analogous arguments in
  Lemma~\ref{lem2}.
\end{proof}


\begin{lemma}[Achieving consensus]
  Let us suppose that the non-degeneracy assumption ~\ref{asm2},
  the balanced communication assumption~\ref{asm3} and the periodical strong connectivity
  assumption~\ref{asm1} hold. Consider the sequences of $\{x^{[i]}(k)\}$, $\{\mu^{[i]}(k)\}$, $\{\lambda^{[i]}(k)\}$ and
  $\{y^{[i]}(k)\}$ of the distributed penalty primal-dual subgradient
  algorithm with the step-size sequence $\{\alpha(k)\}$ and the
  associated $\{s(k)\}$ satisfying
  $\displaystyle{\lim_{k\rightarrow+\infty}\alpha(k) = 0}$ and
  $\displaystyle{\lim_{k\rightarrow+\infty}\alpha(k+1)s(k) = 0}$. Then
  there exists $\tilde{x}\in X$ such that
  $\displaystyle{\lim_{k\rightarrow+\infty}\|x^{[i]}(k) - \tilde{x}\| =
    0}$ for all $i\in V$. Furthermore,
  $\displaystyle{\lim_{k\rightarrow+\infty}\|\mu^{[i]}(k) - \mu^{[j]}(k)\| =
    0}$, $\displaystyle{\lim_{k\rightarrow+\infty}\|\lambda^{[i]}(k) -
    \lambda^{[j]}(k)\| = 0}$ and
  $\displaystyle{\lim_{k\rightarrow+\infty}\|y^{[i]}(k) - y^{[j]}(k)\| = 0}$
  for all $i,j\in V$.\label{lem12}
\end{lemma}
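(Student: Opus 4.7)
My plan follows the template of the DLPDS analysis in Lemma~\ref{lem3}, with the additional care needed since the dual estimates and hence the primal subgradients $\mathcal{S}^{[i]}_x(k)$ are no longer uniformly bounded. The idea is to rewrite each update in the consensus-plus-perturbation form and apply Proposition~\ref{pro1} from the Appendix. For the dual variables, $\mu^{[i]}(k+1) = v^{[i]}_{\mu}(k) + u^{[i]}_{\mu}(k)$ with local input $u^{[i]}_\mu(k) = \alpha(k)[g(v^{[i]}_x(k))]^+$ (and likewise for $\lambda^{[i]}$). Compactness of $X$ and continuity of $g,h$ yield uniform bounds on $[g(\cdot)]^+$ and $|h(\cdot)|$ over $X$, so $\|u^{[i]}_\mu(k)\|,\|u^{[i]}_\lambda(k)\|\to 0$, and Proposition~\ref{pro1} delivers the dual consensus $\|\mu^{[i]}(k)-\mu^{[j]}(k)\|\to 0$ and $\|\lambda^{[i]}(k)-\lambda^{[j]}(k)\|\to 0$. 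For the primal estimates, non-expansiveness of $P_X$ together with $v^{[i]}_x(k)\in X$ gives $\|e^{[i]}_x(k)\|\leq\alpha(k)\|\mathcal{S}^{[i]}_x(k)\|$, which vanishes by Lemma~\ref{lem6}(a); a second application of Proposition~\ref{pro1} yields the primal consensus $\|x^{[i]}(k)-x^{[j]}(k)\|\to 0$.

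The main obstacle is upgrading primal consensus to convergence to a \emph{common} limit $\tilde{x}\in X$. I plan to accomplish this by showing that $V_x(k):=\sum_i\|x^{[i]}(k)-x\|^2$ has a limit for every $x\in X$. From the basic iteration relation~\eqref{e27},
\begin{align*}
V_x(k+1)\leq V_x(k)+\sum_i\alpha(k)^2\|\mathcal{S}^{[i]}_x(k)\|^2+2\alpha(k)\sum_i\bigl|\HH^{[i]}(v^{[i]}_x(k),v^{[i]}_\mu(k),v^{[i]}_\lambda(k))-\HH^{[i]}(x,v^{[i]}_\mu(k),v^{[i]}_\lambda(k))\bigr|.
\end{align*}
The first perturbation is summable by Lemma~\ref{lem6}(a). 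The second is bounded using the $w^{[i]}(k)$-dependent Lipschitz constant $D_F+D_{G^+}M_\mu(k)+D_H M_\lambda(k)$, after splitting $\|v^{[i]}_x(k)-x\|\leq\|v^{[i]}_x(k)-\hat{x}(k)\|+\|\hat{x}(k)-x\|$. The consensus-error piece is then summable precisely by Lemma~\ref{lem6}(b), which was designed for this purpose, while the remaining piece is controlled via $\|\hat{x}(k)-x\|\leq\diam(X)$ together with $\alpha(k)(1+M_\mu(k)+M_\lambda(k))\to 0$ from Lemma~\ref{lem6}(a). A Fejér-monotonicity argument patterned on the limsup-versus-liminf analysis in the proof of Lemma~\ref{lem3} then delivers $\lim_k V_x(k)$ for each $x\in X$.

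With the limit of $V_x(k)$ established, primal consensus implies that $\lim_k\|\hat{x}(k)-x\|$ exists for every $x\in X$. Compactness of $X$ gives convergent subsequences of $\{\hat{x}(k)\}$ with limits in $X$; evaluating $\lim_k\|\hat{x}(k)-x\|$ at $x$ equal to each candidate subsequential limit forces all such limits to coincide, so $\hat{x}(k)\to\tilde{x}$ for some $\tilde{x}\in X$, whence $x^{[i]}(k)\to\tilde{x}$ for every $i$ by consensus. Finally, continuity of each $f^{[i]}$ implies $u^{[i]}_y(k)=N(f^{[i]}(x^{[i]}(k))-f^{[i]}(x^{[i]}(k-1)))\to 0$, and one more application of Proposition~\ref{pro1} yields the value consensus $\|y^{[i]}(k)-y^{[j]}(k)\|\to 0$, completing the proof.
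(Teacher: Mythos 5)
Your proposal is correct and follows essentially the same route as the paper's own (very terse) proof: the paper likewise derives a perturbed Fej\'er-type inequality from the basic iteration relation, uses $\alpha(k)\|\mathcal{S}^{[i]}_x(k)\|\to 0$ from Lemma~\ref{lem6}(a) to reduce the primal part to the argument of Lemma~\ref{lem3}, and obtains the dual and $y$ consensus from Proposition~\ref{pro1} with vanishing local inputs. Your detour through the Lipschitz constant $D_F+D_{G^+}M_{\mu}(k)+D_H M_{\lambda}(k)$ and the splitting of $\|v^{[i]}_x(k)-x\|$ via $\hat{x}(k)$ with Lemma~\ref{lem6}(b) is harmless but unnecessary, since the paper simply bounds the cross term by $2\alpha(k)\|\mathcal{S}^{[i]}_x(k)\|\,\|v^{[i]}_x(k)-x\|$ with $\|v^{[i]}_x(k)-x\|$ controlled by the diameter of the compact set $X$.
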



\begin{proof}
Similar to~\eqref{e23}, we have
\begin{align}
  \sum_{i=1}^N \|x^{[i]}(k+1)-x\|^2 \leq \sum_{i=1}^N \|x^{[i]}(k) -x\|^2 +
  \sum_{i=1}^N\alpha(k)^2\|{\mathcal{S}}_x^{[i]}(k)\|^2 + \sum_{i=1}^N
  2\alpha(k)\|{\mathcal{S}}_x^{[i]}(k)\|\|v^{[i]}_x(k)-x\|.\nnum
\end{align}
Since
$\displaystyle{\lim_{k\rightarrow+\infty}\alpha(k)\|{\mathcal{S}}_x^{[i]}(k)\|
  = 0}$, the proofs of
$\displaystyle{\lim_{k\rightarrow+\infty}\|x^{[i]}(k) - \tilde{x}\| = 0}$
for all $i\in V$ are analogous to those in Lemma~\ref{lem3}. The
remainder of the proofs can be finished by Proposition~\ref{pro1} with
the properties of
$\displaystyle{\lim_{k\rightarrow+\infty}u^{[i]}_{\mu}(k)} = 0$,
$\displaystyle{\lim_{k\rightarrow+\infty}u^{[i]}_{\lambda}(k)} = 0$ and
$\displaystyle{\lim_{k\rightarrow+\infty}u^{[i]}_y(k) = 0}$ (due to
$\displaystyle{\lim_{k\rightarrow+\infty}x^{[i]}(k) = \tilde{x}}$ and
$f^{[i]}$ is continuous).
\end{proof}


%

We now proceed to show Theorem~\ref{the3} based on five claims.

\noindent\textbf{Proof of Theorem~\ref{the3}:}

\noindent\textbf{Claim 1:} For any $x^*\in X^*$ and
$(\mu^*,\lambda^*)\in D^*_P$, the sequences of
$\{\alpha(k)\big[\sum_{i=1}^N{\HH}^{[i]}(x^*,v_{\mu}^{[i]}(k),v_{\lambda}^{[i]}(k))
-{\HH}(x^*,\hat{\mu}(k),\hat{\lambda}(k))\big]\}$ and
$\{\alpha(k)\big[\sum_{i=1}^N{\HH}^{[i]}(v_x^{[i]}(k),\mu^*,\lambda^*)
-{\HH}(\hat{x}(k),\mu^*,\lambda^*)\big]\}$ are summable.

\begin{proof}
  Observe that
  \begin{align}
    &\|{\HH}^{[i]}(x^*,v_{\mu}^{[i]}(k),v_{\lambda}^{[i]}(k))
    -{\HH}^{[i]}(x^*,\hat{\mu}(k),\hat{\lambda}(k))\|\nnum\\
    &\leq \|v_{\mu}^{[i]}(k)-\hat{\mu}(k)\|\|[g(x^*)]^+\| +
    \|v_{\lambda}^{[i]}(k)-\hat{\lambda}(k)\|\|h(x^*)\|\nnum\\&\leq G^+
    \|v_{\mu}^{[i]}(k)-\hat{\mu}(k)\| + H
    \|v_{\lambda}^{[i]}(k)-\hat{\lambda}(k)\|.\label{e25}
  \end{align}
  By using the summability of $\{\alpha(k)\|\hat{\mu}(k) -
  v^{[i]}_{\mu}(k)\|\}$ and $\{\alpha(k)\|\hat{\lambda}(k) -
  v^{[i]}_{\lambda}(k)\|\}$ in Part (b) of Lemma~\ref{lem6}, we have that
  $\{\alpha(k)\sum_{i=1}^N\|{\HH}^{[i]}(x^*,v_{\mu}^{[i]}(k),v_{\lambda}^{[i]}(k))
  -{\HH}^{[i]}(x^*,\hat{\mu}(k),\hat{\lambda}(k))\|\}$ and thus
  $\{\alpha(k)\big[\sum_{i=1}^N\big({\HH}^{[i]}(x^*,v_{\mu}^{[i]}(k),v_{\lambda}^{[i]}(k))
  -{\HH}^{[i]}(x^*,\hat{\mu}(k),\hat{\lambda}(k))\big)\big]\}$ are
  summable. Similarly, the following estimates hold:
  \begin{align*}
    &\|{\HH}^{[i]}(v_x^{[i]}(k),\mu^*,\lambda^*)
    -{\HH}^{[i]}(\hat{x}(k),\mu^*,\lambda^*)\|\leq \|f^{[i]}(v_x^{[i]}(k))-f^{[i]}(\hat{x}(k))\|\nnum\\
    &+ \|(\mu^*)^T([g(v_x^{[i]}(k))]^+ - [g(\hat{x}(k))]^+)\| + \|(\lambda^*)^T(|h(v^{[i]}_x(k))| - |h(\hat{x}(k))|)\|\nnum\\
    &\leq (D_F + {D_{G^+}}\|\mu^*\| + D_H\|\lambda^*\|)\|v^{[i]}_x(k) -
    \hat{x}(k)\|.
  \end{align*}
  Then the property of $\sum_{k=0}^{+\infty} \alpha(k)\|\hat{x}(k) -
  v^{[i]}_x(k)\| < +\infty$ in Part (b) of Lemma~\ref{lem6} implies the
  summability of the sequence
  $\{\alpha(k)\sum_{i=1}^N\|{\HH}^{[i]}(v_x^{[i]}(k),\mu^*,\lambda^*)
  -{\HH}^{[i]}(\hat{x}(k),\mu^*,\lambda^*)\|\}$ and thus the sequence
  $\{\alpha(k)\sum_{i=1}^N\big({\HH}^{[i]}(v_x^{[i]}(k),\mu^*,\lambda^*)
  -{\HH}^{[i]}(\hat{x}(k),\mu^*,\lambda^*)\big)\}$.
\end{proof}

\noindent\textbf{Claim 2:} Denote the weighted version of the local penalty function $\HH^{[i]}$ over $[0,k-1]$ as
\begin{align*}
  \hat{\HH}^{[i]}(k) := \frac{1}{s(k-1)}
  \sum_{\ell=0}^{k-1}\alpha(\ell){\HH}^{[i]}(v^{[i]}_x(\ell),v_{\mu}^{[i]}(\ell),v_{\lambda}^{[i]}(\ell)).
\end{align*}
The following property holds: $\displaystyle{\lim_{k\rightarrow+\infty}\sum_{i=1}^N
\hat{\HH}^{[i]}(k) = p^*}$.

\begin{proof}
  Summing~\eqref{e27} over $[0,k-1]$ and replacing $x$ by $x^*\in
  X^*$ leads to
  \begin{align}
    &\sum_{\ell=0}^{k-1}\alpha(\ell)\sum_{i=1}^N({\HH}^{[i]}(v^{[i]}_x(\ell),v_{\mu}^{[i]}(\ell),v_{\lambda}^{[i]}(\ell))
    -{\HH}^{[i]}(x^*,v_{\mu}^{[i]}(\ell),v_{\lambda}^{[i]}(\ell)))\nnum\\ &\leq
    \sum_{i=1}^N\|x^{[i]}(0)-{x}^*\|^2 + \sum_{\ell=0}^{k-1}\sum_{i=1}^N
    \alpha(\ell)^2\|{\mathcal{S}}_x^{[i]}(\ell)\|^2.\label{e22}
  \end{align}
  The summability of $\{\alpha(k)^2\|{\mathcal{S}}_x^{[i]}(k)\|^2\}$ in Part (b)
  of Lemma~\ref{lem6} implies that the right-hand side of~\eqref{e22}
  is finite as $k\rightarrow+\infty$, and thus
  \begin{align}
    \limsup_{k\rightarrow\infty}\frac{1}{s(k-1)}\sum_{\ell=0}^{k-1}\alpha(\ell)
    \big[\sum_{i=1}^N\big({\HH}^{[i]}(v^{[i]}_x(\ell),v_{\mu}^{[i]}(\ell),v_{\lambda}^{[i]}(\ell))
    -{\HH}^{[i]}(x^*,v_{\mu}^{[i]}(\ell),v_{\lambda}^{[i]}(\ell))\big)\big] \leq
    0.\label{e32}
  \end{align}

  Pick any $(\mu^*,\lambda^*)\in D^*_P$. It follows from
  Theorem~\ref{the0} that $(x^*,\mu^*,\lambda^*)$ is a saddle point of
  $\HH$ over
  $X\times{\real}^m_{\geq0}\times{\real}^{\nu}_{\geq0}$. Since
  $(\hat{\mu}(k),\hat{\lambda}(k))\in{\real}^m_{\geq0}\times{\real}^{\nu}_{\geq0}$,
  then we have ${\HH}(x^*,\hat{\mu}(k),\hat{\lambda}(k)) \leq
  {\HH}(x^*,\mu^*,\lambda^*) = p^*$. Combining this relation, Claim 1
  and~\eqref{e32} renders that
  \begin{align}
    &\limsup_{k\rightarrow+\infty}\frac{1}{s(k-1)}
    \sum_{\ell=0}^{k-1}\alpha(\ell)\big[\sum_{i=1}^N{\HH}^{[i]}(v^{[i]}_x(\ell),v_{\mu}^{[i]}(\ell),v_{\lambda}^{[i]}(\ell))-p^*\big]\nnum\\
    &\leq \limsup_{k\rightarrow+\infty}\frac{1}{s(k-1)}
    \sum_{\ell=0}^{k-1}\alpha(\ell)\big[\sum_{i=1}^N\big({\HH}^{[i]}(v^{[i]}_x(\ell),v_{\mu}^{[i]}(\ell),v_{\lambda}^{[i]}(\ell))
    -{\HH}^{[i]}(x^*,v_{\mu}^{[i]}(\ell),v_{\lambda}^{[i]}(\ell))\big)\big]\nnum\\
    & + \limsup_{k\rightarrow+\infty}\frac{1}{s(k-1)}\sum_{\ell=0}^{k-1}\alpha(\ell)\big[\sum_{i=1}^N{\HH}^{[i]}(x^*,v_{\mu}^{[i]}(\ell),v_{\lambda}^{[i]}(\ell))
    -{\HH}(x^*,\hat{\mu}(\ell),\hat{\lambda}(\ell))\big]\nnum\\
    &+\limsup_{k\rightarrow+\infty}\frac{1}{s(k-1)}\sum_{\ell=0}^{k-1}
    ({\HH}(x^*,\hat{\mu}(\ell),\hat{\lambda}(\ell))-p^*)\leq
    0,\nnum\end{align}
  and thus $\limsup_{k\rightarrow+\infty}\sum_{i=1}^N \hat{\HH}^{[i]}(k) \leq
    p^*.$

  On the other hand, $\hat{x}(k)\in X$ (due to the fact that $X$ is
  convex) implies that ${\HH}(\hat{x}(k),\mu^*,\lambda^*) \geq
  {\HH}(x^*,\mu^*,\lambda^*) = p^*$. Along  similar lines, by
  using~\eqref{e28} with $\mu = \mu^*$, $\lambda = \lambda^*$, and
  Claim 1, we have the following estimate: $\liminf_{k\rightarrow+\infty}\sum_{i=1}^N \hat{\HH}^{[i]}(k) \geq
    p^*$. Then we have the desired relation.\end{proof}

\noindent\textbf{Claim 3:} Denote by $\pi(k) := \sum_{i=1}^N
  {\HH}^{[i]}(v^{[i]}_x(k),v^{[i]}_{\mu}(k),v^{[i]}_{\lambda}(k)) -
  {\HH}(\hat{x}(k),\hat{\mu}(k),\hat{\lambda}(k))$. And we denote the weighted version of the global penalty function $\HH$ over $[0,k-1]$ as
\begin{align*}
  \hat{\HH}(k) :=
  \frac{1}{s(k-1)}\sum_{\ell=0}^{k-1}\alpha(\ell){\HH}(\hat{x}(\ell),\hat{\mu}(\ell),\hat{\lambda}(\ell)).
\end{align*}
The following property holds: $\displaystyle{\lim_{k\rightarrow+\infty}\hat{\HH}(k) = p^*}$.

\begin{proof}
  Notice that
  \begin{align}
    \pi(k) &= \sum_{i=1}^N (f^{[i]}(v^{[i]}_x(k)) - f^{[i]}(\hat{x}(k)))
    + \sum_{i=1}^N \big(v^{[i]}_{\mu}(k)^T[g(v^{[i]}_x(k))]^+ - v^{[i]}_{\mu}(k)^T[g(\hat{x}(k))]^+\big)\nnum\\
    &+ \sum_{i=1}^N \big(v^{[i]}_{\mu}(k)^T[g(\hat{x}(k))]^+ -
    \hat{\mu}(k)^T[g(\hat{x}(k))]^+\big)
    + \sum_{i=1}^N \big(v^{[i]}_{\lambda}(k)^T |h(v^{[i]}_x(k))| - v^{[i]}_{\lambda}(k)^T|h(\hat{x}(k))|\big)\nnum\\
    &+ \sum_{i=1}^N \big(v^{[i]}_{\lambda}(k)^T |h(\hat{x}(k))| -
    \hat{\lambda}(k)^T|h(\hat{x}(k))|\big).\label{e44}
\end{align}
By using the boundedness of subdifferentials and the primal estimates,
it follows from~\eqref{e44} that
\begin{align}
  &\|\pi(k)\|\leq (D_F + {D_{G^+}} M_{\mu}(k) + D_H M_{\lambda}(k))\times\sum_{i=1}^N \|v^{[i]}_x(k) - \hat{x}(k)\|\nnum\\
  & + G^+ \sum_{i=1}^N \|v^{[i]}_{\mu}(k) - \hat{\mu}(k)\| + H\sum_{i=1}^N \|v^{[i]}_{\lambda}(k) -
  \hat{\lambda}(k)\|.
\end{align}
Then it follows from (b) in Lemma~\ref{lem6} that
$\{\alpha(k)\|\pi(k)\|\}$ is summable. Notice that $\|\hat{\HH}(k) -
\sum_{i=1}^N \hat{\HH}^{[i]}(k)\| \leq
\frac{\sum_{\ell=0}^{k-1}\alpha(\ell)\|\pi(\ell)\|}{s(k-1)}$, and thus
$\displaystyle{\lim_{k\rightarrow+\infty}\|\hat{\HH}(k) - \sum_{i=1}^N
  \hat{\HH}^{[i]}(k)\| = 0}$. The desired result immediately follows from
Claim 2.
\end{proof}

\noindent\textbf{Claim 4:} The limit point $\tilde{x}$ in
Lemma~\ref{lem12} is a primal optimal solution.

\begin{proof}
  Let $\hat{\mu}(k) =
  (\hat{\mu}_1(k),\cdots,\hat{\mu}_m(k))^T\in{\real}_{\geq0}^m$. By
  the balanced communication assumption~\ref{asm3}, we obtain
  \begin{align*}
    \sum_{i=1}^N \mu^{[i]}(k+1) &= \sum_{i=1}^N \sum_{j=1}^N a^i_j(k) \mu^{[j]}(k) + \alpha(k)\sum_{i=1}^N [g(v^{[i]}_x(k))]^+\nnum\\
    &= \sum_{j=1}^N \mu^{[j]}(k) + \alpha(k)\sum_{i=1}^N [g(v^{[i]}_x(k))]^+.
  \end{align*}
  This implies that the sequence $\{\hat{\mu}_{\ell}(k)\}$ is
  non-decreasing in ${\real}_{\geq0}$. Observe that
  $\{\hat{\mu}_{\ell}(k)\}$ is lower bounded by zero. In this way, we
  distinguish the following two cases:

  \underline{Case 1:} The sequence $\{\hat{\mu}_{\ell}(k)\}$ is upper
  bounded. Then $\{\hat{\mu}_{\ell}(k)\}$ is convergent in
  ${\real}_{\geq0}$. Recall that
  $\displaystyle{\lim_{k\rightarrow+\infty}\|\mu^{[i]}(k)-\mu^{[j]}(k)\| = 0}$
  for all $i,j\in V$. This implies that there exists
  $\mu_{\ell}^*\in{\real}_{\geq0}$ such that
  $\displaystyle{\lim_{k\rightarrow+\infty}\|\mu^{[i]}_{\ell}(k)-\mu_{\ell}^*\|
    = 0}$ for all $i\in V$. Observe that $\sum_{i=1}^N \mu^{[i]}(k+1) =
  \sum_{i=1}^N \mu^{[i]}(0) + \sum_{\tau=0}^k
  \alpha(\tau)\sum_{i=1}^N[g(v^{[i]}_x(\tau))]^+$. Thus, we have
  $\sum_{k=0}^{+\infty} \alpha(k)\sum_{i=1}^N[g_{\ell}(v^{[i]}_x(k))]^+ <
  +\infty$, implying that
  $\liminf_{k\rightarrow+\infty}[g_{\ell}(v^{[i]}_x(k))]^+=0$. Since
  $\displaystyle{\lim_{k\rightarrow+\infty}\|x^{[i]}(k)-\tilde{x}\| = 0}$
  for all $i\in V$, then
  $\displaystyle{\lim_{k\rightarrow+\infty}\|v_x^{[i]}(k)-\tilde{x}\| =
    0}$, and thus $[g_{\ell}(\tilde{x})]^+ = 0$.

  \underline{Case 2:} The sequence $\{\hat{\mu}_{\ell}(k)\}$ is not
  upper bounded. Since $\{\hat{\mu}_{\ell}(k)\}$ is non-decreasing,
  then $\hat{\mu}_{\ell}(k)\rightarrow +\infty$. It follows from Claim
  3 and (a) in Lemma~\ref{lem8} that it is impossible that
  ${\HH}(\hat{x}(k),\hat{\mu}(k),\hat{\lambda}(k))\rightarrow+\infty$. Assume
  that $[g_{\ell}(\tilde{x})]^+ > 0$. Then we have
  \begin{align}
    {\HH}(\hat{x}(k),\hat{\mu}(k),\hat{\lambda}(k))
    &= f(\hat{x}(k)) + N \hat{\mu}(k)^T [g(\hat{x}(k))]^+ + N \lambda(k)^T |h(\hat{x}(k))|\nnum\\
    & \geq f(\hat{x}(k)) +
    \hat{\mu}_{\ell}(k)[g_{\ell}(\hat{x}(k))]^+.\label{e58}
  \end{align}

  Taking limits on both sides of~\eqref{e58} and we obtain:
\begin{align*}
  \liminf_{k\rightarrow+\infty}{\HH}(\hat{x}(k),\hat{\mu}(k),\hat{\lambda}(k))
  \geq \limsup_{k\rightarrow+\infty}(f(\hat{x}(k)) +
  \hat{\mu}_{\ell}(k)[g_{\ell}(\hat{x}(k))]^+) = +\infty.
\end{align*}
Then we reach a contradiction, implying that $[g_{\ell}(\tilde{x})]^+
= 0$.

In both cases, we have $[g_{\ell}(\tilde{x})]^+ = 0$ for any $1\leq
\ell\leq m$. By utilizing similar arguments, we can further prove that
$|h(\tilde{x})| = 0$. Since $\tilde{x}\in X$, then $\tilde{x}$ is
feasible and thus $f(\tilde{x})\geq p^*$. On the other hand, since
$\frac{\sum_{\ell=0}^{k-1}\alpha(\ell)
  \hat{x}(\ell)}{\sum_{\ell=0}^{k-1}\alpha(\ell)}$ is a convex
combination of $\hat{x}(0),\cdots,\hat{x}(k-1)$ and
$\displaystyle{\lim_{k\rightarrow+\infty}\hat{x}(k) = \tilde{x}}$,
then Claim 3 and (b) in Lemma~\ref{lem8} implies that
\begin{align*}
  p^* = \lim_{k\rightarrow+\infty}\hat{\HH}(k) =
  \lim_{k\rightarrow+\infty}
  \frac{\sum_{\ell=0}^{k-1}\alpha(\ell){\HH}(\hat{x}(\ell),\hat{\mu}(\ell),\hat{\lambda}(\ell))}{\sum_{\ell=0}^{k-1}\alpha(\ell)}
  \geq
  \lim_{k\rightarrow+\infty}f(\frac{\sum_{\ell=0}^{k-1}\alpha(\ell)
    \hat{x}(\ell)}{\sum_{\ell=0}^{k-1}\alpha(\ell)}) = f(\tilde{x}).
\end{align*}
Hence, we have $f(\tilde{x}) = p^*$ and thus $\tilde{x}\in X^*$.
\end{proof}

\noindent\textbf{Claim 5:} It holds that
$\displaystyle{\lim_{k\rightarrow+\infty}\|y^{[i]}(k) - p^*\| = 0}$.

\begin{proof}
  The proof follows the same lines in Claim 2 of Theorem~\ref{the1}
  and thus omitted here.
\end{proof}

\section{Discussion}\label{sec:discussion}

In this section, we present some possible extensions and interesting
special cases.

\subsection{Discussion on the periodic strong connectivity assumption in Theorem~\ref{the1}}

In the case that $\GG(k)$ is undirected, then the periodic strong connectivity
assumption~\ref{asm1} in Theorem~\ref{the1} can be weakened into:
\begin{assumption} [Eventual strong connectivity]
  The undirected graph $(V, \cup_{k\geq s} E(k))$ is connected for all
  time instant $s\geq0$.\label{asm7}
\end{assumption}

If $\GG(k)$ is undirected, the periodic connectivity
assumption~\ref{asm1} in Theorem~\ref{the1} can also be replaced with the assumption in
Proposition 2 of~\cite{LM:05}; i.e., for any time instant $k\geq0$,
there is an agent connected to all other agents in the undirected
graph $(V, \cup_{k\geq s} E(k))$.

\subsection{A generalized step-size scheme}

The step-size scheme in the DLPDS algorithm can be slightly
generalized the case that the maximum deviation of step-sizes between agents at each time is not large. It is formally stated as follows: $\displaystyle{\lim_{k\rightarrow+\infty}\alpha^{[i]}(k) =
0}$, $\sum_{k=0}^{+\infty}\alpha^{[i]}(k) = +\infty,\quad
\sum_{k=0}^{+\infty}\alpha^{[i]}(k)^2 < +\infty,\quad \min_{i\in
  V}\alpha^{[i]}(k) \geq C_{\alpha}\max_{i\in V}\alpha^{[i]}(k),$ where
$\alpha^{[i]}(k)$ is the step-size of agent~$i$ at time $k$ and
$C_{\alpha}\in (0,1]$.

\subsection{Discussion on the Slater's condition in Theorem~\ref{the3}}

If $g_{\ell}$ ($1\leq \ell \leq m$) is linear, then the Slater's
condition~\ref{asm5} can be weakened to the following: there exists a
relative interior point $\bar{x}$ of $X$ such that $h(\bar{x}) = 0$
and $g(\bar{x}) \leq 0$. For this case, the strong duality and the
non-emptyness of the penalty dual optimal set can be ensured by
replacing Proposition 5.3.5~\cite{DPB:09} with Proposition
5.3.4~\cite{DPB:09} in the proofs of Lemma~\ref{lem1}. In this way,
the convergence results of the DPPDS algorithm still hold for the case
of linear $g_{\ell}$.

\subsection{The special case in the absence of inequality and equality
  constraints}\label{specialcase}
The following special case of problem~\eqref{e2} is studied in~\cite{AN-AO-PAP:08}:
\begin{align}
  & \min_{x\in {\real}^n } \sum_{i=1}^N f^{[i]}(x),\quad {\rm s.t.} \quad
  x\in \cap_{i=1}^N X^{[i]}. \label{e42}
\end{align}
In order to solve problem~\eqref{e42}, we consider the following \emph{Distributed Primal Subgradient} Algorithm which is a special case of the DLPDS algorithm:
\begin{align*}
  x^{[i]}(k+1) = P_{X^{[i]}}[v^{[i]}_x(k) - \alpha(k){\DD}f^{[i]}(v^{[i]}_x(k))].
\end{align*}

\begin{corollary}[Convergence properties of the distributed primal subgradient algorithm] Consider problem~\eqref{e42}, and let the non-degeneracy
  assumption~\ref{asm2}, the balanced communication
  assumption~\ref{asm3} and the periodic strong
  connectivity assumption~\ref{asm1} hold. Consider the sequence $\{x^{[i]}(k)\}$ of
  the distributed primal subgradient algorithm with initial states
  $x^{[i]}(0)\in X^{[i]}$ and the step-sizes satisfying
  $\displaystyle{\lim_{k\rightarrow+\infty}\alpha(k)= 0}$,
  $\displaystyle{\sum_{k=0}^{+\infty}\alpha(k) = +\infty}$, and
  $\displaystyle{\sum_{k=0}^{+\infty}\alpha(k)^2 < +\infty}$. Then
  there exists an optimal solution $x^*$ such that $\displaystyle{\lim_{k\rightarrow+\infty}\|x^{[i]}(k)-x^*\| =
    0}$ all $i\in V$.
\label{cor1}
\end{corollary}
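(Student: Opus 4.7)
The plan is to follow the same two-stage strategy used in the proof of Theorem~\ref{the1}, but drastically simplified by the absence of both the inequality constraint $g$ and the dual variable. With no $g$, the Lagrangian function collapses to $f$, the dual sets $M^{[i]}$ disappear, and the DLPDS algorithm degenerates into exactly the distributed primal subgradient algorithm, so one can recycle the primal machinery of Lemmas~\ref{lem3} and~\ref{lem7} almost verbatim.

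First I would rewrite the update as $x^{[i]}(k+1) = v^{[i]}_x(k) + e^{[i]}_x(k)$ with projection error $e^{[i]}_x(k) := P_{X^{[i]}}[v^{[i]}_x(k) - \alpha(k)\DD f^{[i]}(v^{[i]}_x(k))] - v^{[i]}_x(k)$, so that the recursion fits the template used by the dynamic average consensus Proposition~\ref{pro1} in the Appendix. Applying Lemma~\ref{lem5} with $Z = X^{[i]}$ and the subgradient inequality for $f^{[i]}$ yields the basic iteration
\begin{align*}
\sum_{i=1}^N \|x^{[i]}(k{+}1)-x\|^2 \leq\, & \sum_{i=1}^N \|x^{[i]}(k)-x\|^2 + \sum_{i=1}^N \alpha(k)^2\|\DD f^{[i]}(v^{[i]}_x(k))\|^2 \\
& - 2\alpha(k)\sum_{i=1}^N (f^{[i]}(v^{[i]}_x(k))-f^{[i]}(x))
\end{align*}
for every $x\in X$. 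Since each $X^{[i]}$ is compact, Proposition~5.4.2 in~\cite{DPB:09} provides a uniform bound $L$ on $\|\DD f^{[i]}\|$ over $\co(\cup_i X^{[i]})$, so $f^{[i]}$ is $L$-Lipschitz there. The argument of Lemma~\ref{lem3} then gives: (i) $\lim_{k\to\infty}\sum_i \|x^{[i]}(k)-x\|^2$ exists for every $x\in X$; (ii) $\lim_{k\to\infty}\|e^{[i]}_x(k)\|=0$; and, via Proposition~\ref{pro1}, (iii) $\lim_{k\to\infty}\|x^{[i]}(k)-x^{[j]}(k)\|=0$. Combining (i) and (iii) and using closedness of each $X^{[i]}$ produces a common limit $x^*\in X$.

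The second step is to show $x^*\in X^*$. I would mimic the contradiction in Lemma~\ref{lem7}: suppose there exists $\tilde{x}\in X$ with $f(x^*)-f(\tilde{x})=\varsigma>0$. Introducing the running average $\hat{x}(k)$, split the gap $f^{[i]}(v^{[i]}_x(k)) - f^{[i]}(\tilde{x})$ through the intermediate points $\hat{x}(k)$, $x^*$, and control the crossing terms by the Lipschitz constant $L$ together with $\|v^{[i]}_x(k)-\hat{x}(k)\|\leq 2\max_{i,j}\|x^{[i]}(k)-x^{[j]}(k)\|\to 0$ and $\|\hat{x}(k)-x^*\|\to 0$. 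For all sufficiently large $k$ the iteration inequality then reduces to $\sum_i \|x^{[i]}(k{+}1)-\tilde x\|^2 \leq \sum_i \|x^{[i]}(k)-\tilde x\|^2 + NL^2\alpha(k)^2 - \varsigma\alpha(k)$, and summing with $\sum_k \alpha(k)=+\infty$ and $\sum_k \alpha(k)^2<+\infty$ yields a contradiction. Hence $f(x^*)\leq f(\tilde{x})$ for every $\tilde{x}\in X$, i.e. $x^*\in X^*$.

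The hard part, such as it is, is the contradiction step, but it is strictly easier than in Theorem~\ref{the1} because there is no dual variable $\mu$, no need for the saddle-point characterization of Lemma~\ref{lem9}, no auxiliary set $M^{[i]}$ to construct via the Distributed Slater-vector Computation Algorithm, and no feasibility verification $g(x^*)\leq 0$ to perform afterwards. In particular, Slater's condition plays no role, so the result holds under only Assumptions~\ref{asm2}, \ref{asm3} and~\ref{asm1}, as stated.
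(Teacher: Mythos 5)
Your proof is correct, but it takes a different route from the paper: the paper's entire proof of Corollary~\ref{cor1} is the single sentence that the result ``is an immediate consequence of Theorem~\ref{the1} with $g(x)\equiv 0$,'' whereas you rebuild the argument from scratch by specializing the machinery of Lemmas~\ref{lem3} and~\ref{lem7} to the unconstrained-in-$g$ setting. Your longer route buys something real. The literal substitution $g\equiv 0$ into Theorem~\ref{the1} is not quite immediate: the reduced Slater condition asks for $\bar{x}\in X$ with $g(\bar{x})<0$, which no constant-zero $g$ can satisfy, and the construction of the dual supersets $M^{[i]}$ via $\gamma(\bar{x})=\min_\ell\{-g_\ell(\bar{x})\}$ degenerates; one has to understand that the whole dual/Slater apparatus trivializes (e.g.\ by taking $m=0$ or a strictly negative constant $g$) before the citation goes through. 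Your direct argument sidesteps this entirely and makes explicit that Slater's condition plays no role in the corollary --- which is precisely the ``relaxing an interior-point condition requirement'' that the paper advertises in the introduction but does not spell out in its one-line proof. The individual steps you give (the basic iteration via Lemma~\ref{lem5} and the subgradient inequality, the uniform subgradient bound from Proposition~5.4.2 of~\cite{DPB:09}, consensus via Proposition~\ref{pro1}, and the contradiction through the running average $\hat{x}(k)$ with $\sum_k\alpha(k)=+\infty$, $\sum_k\alpha(k)^2<+\infty$) all check out and are faithful simplifications of the corresponding steps in the proof of Theorem~\ref{the1}.
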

\begin{proof}
  The result is an immediate consequence of Theorem~\ref{the1} with
  $g(x) \equiv0$.
\end{proof}

\section{Numerical examples}\label{sec:example}

In this section, we illustrate the performance of the DLPDS and DPPDS algorithms via two numerical examples.

\subsection{A numerical example of NUM for the DLPDS algorithm}

In order to study the performance of the DLPDS algorithm, we here consider a numerical example of network utility maximization, e.g., in~\cite{KPK-AM-DT:98}. Consider five agents and one link where each agent sends data through the link at a rate of $z_i$, and the link capacity is $5$. The global decision vector $x:=[z_1 \cdots z_5]^T$ is the resource allocation vector. Each agent $i$ is associated a concave utility function $f^{[i]}(z_i) := \sqrt{z_i}$, representing the utility agent $i$ obtains through sending data at a rate of $z_i$. Agents aim to maximize the aggregate sum of local utilities and this problem can be formulated as follows:
\begin{align}\min_{x\in\real^5}\sum_{i\in V}-\sqrt{z_i}
\quad{\rm s.t.}\quad z_1 + z_2 + z_3 + z_4 + z_5 \leq 5,\quad x \in \cap_{i\in V}X^{[i]},\label{e33}
\end{align} where local constraint sets $X^{[i]}$ are given by:
\begin{align*} &X^{[1]} := [0.5,\;\; 5.5]\times [0.5,\;\; 5.5]\times [0.5,\;\; 5.5]\times [0.5,\;\; 5.5]\times [0.5,\;\; 5.5],\\
&X^{[2]} := [0.55,\;\; 5.25]\times [0.55,\;\; 5.25]\times [0.55,\;\; 5.25]\times [0.55,\;\; 5.25]\times [0.55,\;\; 5.25],\\
&X^{[3]} := [0.5,\;\; 6]\times [0.5,\;\; 6]\times [0.5,\;\; 6]\times [0.5,\;\; 6]\times [0.5,\;\; 6],\\
&X^{[4]} := [0.5,\;\; 5]\times [0.5,\;\; 5]\times [0.5,\;\; 5]\times [0.5,\;\; 5]\times [0.5,\;\; 5],\\
&X^{[5]} := [0.525,\;\; 5.75]\times [0.525,\;\; 5.75]\times [0.525,\;\; 5.75]\times [0.525,\;\; 5.75]\times [0.525,\;\; 5.75].\\
\end{align*} We use the DLPDS algorithm to solve problem~\eqref{e33} by choosing step-size $\alpha(k) = \frac{1}{k+1}$.
Figures~\ref{fig1} to~\ref{fig5} shows the simulation results of the DLPDS algorithm in comparison with the centralized subgradient algorithm. It demonstrates that all the agents takes $10^4$ iterates to agree upon the optimal solution $[1\;\;1\;\;1\;\;1\;\;1]^T$. Furthermore, it can be observed that the optimal solution can be found by the centralized subgradient algorithm with the same step-size after 200 iterates which is much less than that of the DLPDS algorithm.

\subsection{A numerical example for the DPPDS algorithm}

Consider a network with five agents and their objective functions are defined as \begin{align*}&f^{[1]}(x) := \frac{1}{5}\big((a-5)^2+(b-2.5)^2+(c-5)^2+(d+2.5)^2+(e+5)^2\big),\\
&f^{[2]}(x) := \frac{1}{5}\big((a-2.5)^2+(b-5)^2+(c+2.5)^2+(d+5)^2+(e-5)^2\big),\\
&f^{[3]}(x) := \frac{1}{5}\big((a-5)^2+(b+2.5)^2+(c+5)^2+(d-5)^2+(e-2.5)^2\big),\\
&f^{[4]}(x) := \frac{1}{5}\big((a+2.5)^2+(b+5)^2+(c-5)^2+(d-2.5)^2+(e-5)^2\big),\\
&f^{[5]}(x) := \frac{1}{5}\big((a+5)^2+(b-5)^2+(c-2.5)^2+(d-5)^2+(e+2.5)^2\big),\end{align*} where the global decision vector $x := [a\;\; b\;\; c\;\; d\;\; e]^T\in\real^5$. The global equality constraint function is given by $h(x) := a + b + c + d + e - 5$, and the global constraint set is as follows:
$X := [-5\;\; 5]\times [-5\;\; 5]\times [-5\;\; 5]\times [-5\;\; 5]\times [-5\;\; 5]$. Consider the optimization problem as follows: \begin{align*}\min_{x\in\real^5}\sum_{i\in V}f^{[i]}(x),\quad {\rm s.t.}\quad h(x) = 0,\quad x\in X.\end{align*}
We employ the DPPDS algorithm to solve the above optimization problem with
the step-size $\alpha(k) = \frac{1}{k+1}$. Its simulation results are included in Figures~\ref{fig6} to~\ref{fig10} in comparison with the performance of the centralized subgradient algorithm. Observe that all the agents asymptotically achieve the optimal solution $[1\;\;1\;\;1\;\;1\;\;1]^T$. Like the DLPDS algorithm, convergence rate of the DPPDS algorithm is slower than the centralized algorithm.

\section{Conclusion}\label{sec:conclusion}

We have studied a multi-agent optimization problem where the agents
aim to minimize a sum of local objective functions
subject to a global inequality constraint, a global equality
constraint and a global constraint set defined as the intersection of
local constraint sets. We have considered two cases: the first one
in the absence of the equality constraint and the second one with
identical local constraint sets. To address these cases, we have
introduced two distributed subgradient algorithms which are based on
Lagrangian and penalty primal-dual methods, respectively. These two
algorithms were shown to asymptotically converge to primal solutions
and optimal values. Two numerical examples were presented to demonstrate the performance our algorithms. Our future work includes explicit characterization of convergence rates of the algorithms in this paper.

\section{Appendix}\label{sec:appendix}

\subsection{Dynamic average consensus algorithms}
The following is the vector version of the first-order dynamic average
consensus algorithm proposed in~\cite{MZ-SM:08a} with $x^{[i]}(k), \xi^{[i]}(k)\in{\real}^n$: \begin{align}
  x^{[i]}(k+1) = \sum_{j=1}^N a^i_j(k)x^{[j]}(k) + \xi^{[i]}(k).\label{e7}
\end{align}

\begin{proposition}
  Denote $\Delta \xi_{\ell}(k) :=
\max_{i\in V}\xi^{[i]}_{\ell}(k) - \min_{i\in V}\xi^{[i]}_{\ell}(k)$ for
$1\leq \ell \leq n$. Let the non-degeneracy assumption~\ref{asm2}, the balanced communication
  assumption~\ref{asm3} and the periodic strong connectivity assumption~\ref{asm1} hold. Assume that
  $\displaystyle{\lim_{k\rightarrow+\infty}\Delta \xi_{\ell}(k) = 0}$
  for all $1\leq \ell\leq n$ and all $k\geq0$. Then
  $\displaystyle{\lim_{k\rightarrow+\infty}\|x^{[i]}(k) - x^{[j]}(k)\| = 0}$
  for all $i, j\in V$.\label{pro1}
\end{proposition}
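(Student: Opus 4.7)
The plan is to split each agent's state into the coordinatewise average and the deviation from that average, and then reduce the claim to a consensus iteration driven by a vanishing input. By the balanced communication assumption~\ref{asm3}, summing~\eqref{e7} over $i \in V$ yields $\bar{x}(k+1) = \bar{x}(k) + \bar{\xi}(k)$, where $\bar{x}(k) := \frac{1}{N}\sum_{i=1}^N x^{[i]}(k)$ and $\bar{\xi}(k) := \frac{1}{N}\sum_{i=1}^N \xi^{[i]}(k)$. Setting $e^{[i]}(k) := x^{[i]}(k) - \bar{x}(k)$ and $\eta^{[i]}(k) := \xi^{[i]}(k) - \bar{\xi}(k)$, a short manipulation using $\sum_{j=1}^N a^i_j(k) = 1$ gives
\begin{align*}
e^{[i]}(k+1) = \sum_{j=1}^N a^i_j(k)\, e^{[j]}(k) + \eta^{[i]}(k),
\end{align*}
together with the zero-sum identities $\sum_{i=1}^N e^{[i]}(k) = 0$ and $\sum_{i=1}^N \eta^{[i]}(k) = 0$ for every $k \geq 0$. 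Moreover $|\eta^{[i]}_\ell(k)| \leq \Delta \xi_\ell(k)$, so $\eta^{[i]}(k) \to 0$ componentwise by hypothesis. Since $x^{[i]}(k) - x^{[j]}(k) = e^{[i]}(k) - e^{[j]}(k)$, the result will follow once I show $\|e^{[i]}(k)\| \to 0$ for each $i \in V$.

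Next, I would invoke the geometric convergence of the transition matrices $\Phi(k,s) := A(k)A(k-1)\cdots A(s)$ for $k \geq s \geq 0$. Under assumptions~\ref{asm2},~\ref{asm3}, and~\ref{asm1}, standard consensus results (see for instance~\cite{AO-JNT:07,AN-AO:09,AN-AO-PAP:08}) furnish constants $C > 0$ and $\beta \in (0,1)$ such that $\bigl|[\Phi(k,s)]^i_j - \tfrac{1}{N}\bigr| \leq C\beta^{k-s}$ for all $i,j \in V$. Unrolling the recursion for $e^{[i]}$ and using $\sum_{j=1}^N e^{[j]}(0) = 0$ and $\sum_{j=1}^N \eta^{[j]}(s) = 0$ to subtract the constant $\tfrac{1}{N}$ piece from each entry of $\Phi$ without changing the value of the sum, I arrive at the key estimate
\begin{align*}
\|e^{[i]}(k+1)\| \leq CN\beta^{k+1}\max_{j\in V} \|e^{[j]}(0)\| + \sum_{s=0}^{k-1} CN\beta^{k-s}\max_{j\in V}\|\eta^{[j]}(s)\| + \|\eta^{[i]}(k)\|.
\end{align*}

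The three terms are then controlled by routine estimates. The first decays geometrically in $k$, and the third vanishes since $\eta^{[i]}(k) \to 0$. For the convolution in the middle I would apply the standard $\epsilon$–$K$ splitting: given $\epsilon > 0$, pick $K$ such that $\max_j \|\eta^{[j]}(s)\| \leq \epsilon$ for all $s \geq K$; then the tail $\sum_{s=K}^{k-1} CN\beta^{k-s}\epsilon$ is bounded by $\tfrac{CN\epsilon}{1-\beta}$, while the head $\sum_{s=0}^{K-1} CN\beta^{k-s}\max_j\|\eta^{[j]}(s)\|$ tends to zero geometrically as $k \to \infty$. Sending $\epsilon \downarrow 0$ forces $\|e^{[i]}(k)\| \to 0$, and hence $\|x^{[i]}(k) - x^{[j]}(k)\| \to 0$. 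The main obstacle is securing the geometric convergence bound on $\Phi(k,s)$, but this is a classical result available in the consensus literature under precisely the network hypotheses imposed here, so once cited as a black box the remainder is a standard contracting-iteration-with-vanishing-perturbation computation.
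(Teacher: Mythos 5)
Your proof is correct. Note first that the paper itself gives no proof of Proposition~\ref{pro1}: it is imported verbatim from the reference~\cite{MZ-SM:08a} on discrete-time dynamic average consensus, so there is no in-paper argument to match against. Your route — splitting $x^{[i]}(k)$ into the network average (which, by double stochasticity, evolves autonomously as $\bar{x}(k+1)=\bar{x}(k)+\bar{\xi}(k)$) and the deviation $e^{[i]}(k)$, observing that the deviations satisfy the same consensus recursion driven by the centered inputs $\eta^{[i]}(k)$ with $\sum_i e^{[i]}(k)=\sum_i\eta^{[i]}(k)=0$ and $|\eta^{[i]}_\ell(k)|\le\Delta\xi_\ell(k)$, and then invoking the geometric convergence $|[\Phi(k,s)]^i_j-\tfrac1N|\le C\beta^{k-s}$ of products of the weight matrices under Assumptions~\ref{asm2}--\ref{asm1} — is sound, and the concluding $\epsilon$--$K$ splitting of the convolution term is the standard way to show that a geometrically decaying kernel convolved with a vanishing sequence vanishes. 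The only cosmetic issues are off-by-one exponents on $\beta$ in the unrolled bound, which are immaterial since $\beta\in(0,1)$ and constants are absorbed. For comparison, the proof in~\cite{MZ-SM:08a} proceeds differently: it works coordinatewise with the spread $\max_{i}x^{[i]}_\ell(k)-\min_i x^{[i]}_\ell(k)$, shows it contracts by a fixed factor over every window of $(N-1)B$ steps while accumulating at most the sum of the input deviations $\Delta\xi_\ell$ over that window, and concludes by a vanishing-perturbation argument. That route avoids the transition-matrix limit theorem (and in fact does not need column stochasticity for the disagreement part, only for identifying the limit with the average), whereas yours leans on the classical ergodicity estimate as a black box but yields a cleaner explicit bound on $\|e^{[i]}(k)\|$. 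Both are valid; yours is a legitimate self-contained alternative.
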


\subsection{A property of projection operators}

The proof of the following lemma can be found in~\cite{DPB:09},~\cite{DPB-AN-AO:03a}
and~\cite{AN-AO-PAP:08}.

\begin{lemma}
  Let $Z$ be a non-empty, closed and convex set in ${\real}^n$. For
  any $z\in{\real}^n$, the following holds for any $y\in Z$: $\|
  P_Z[z] - y \|^2 \leq \| z - y \|^2 - \| P_Z[z] - z
  \|^2$. \label{lem5}
\end{lemma}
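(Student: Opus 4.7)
The plan is to reduce the inequality to the fundamental variational characterization of the projection onto a closed convex set, and then to expand the squared norm on the right-hand side using the inner-product identity. First I would invoke the classical existence and uniqueness statement: because $Z$ is non-empty, closed and convex, the strictly convex problem $\min_{y \in Z} \|z-y\|^2$ admits a unique minimizer, which is precisely $P_Z[z]$.

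Next I would extract the first-order optimality condition. For any $y\in Z$ and $t\in (0,1]$, convexity of $Z$ gives $P_Z[z] + t(y - P_Z[z]) \in Z$, so minimality of $P_Z[z]$ implies
\begin{align*}
\|z - P_Z[z]\|^2 \leq \|z - P_Z[z] - t(y - P_Z[z])\|^2.
\end{align*}
Expanding the right-hand side, cancelling $\|z - P_Z[z]\|^2$, dividing by $t>0$, and letting $t \to 0^+$ yields the variational inequality
\begin{align*}
(z - P_Z[z])^T(y - P_Z[z]) \leq 0, \qquad \forall\, y \in Z,
\end{align*}
or equivalently $(z - P_Z[z])^T(P_Z[z] - y) \geq 0$.

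Finally, I would split the displacement through the projection and expand:
\begin{align*}
\|z-y\|^2 = \|(z - P_Z[z]) + (P_Z[z] - y)\|^2 = \|z - P_Z[z]\|^2 + 2(z - P_Z[z])^T(P_Z[z]-y) + \|P_Z[z]-y\|^2.
\end{align*}
The variational inequality makes the cross term non-negative, so dropping it and rearranging produces $\|P_Z[z]-y\|^2 \leq \|z-y\|^2 - \|P_Z[z]-z\|^2$, as desired. There is no substantial obstacle here: the only mild subtlety is the one-sided limit argument that turns the minimality of $P_Z[z]$ into the variational inequality, and this derivation is completely standard; it is precisely because the proof is so routine that the authors state the lemma only with pointers to classical references such as~\cite{DPB:09},~\cite{DPB-AN-AO:03a}, and~\cite{AN-AO-PAP:08}.
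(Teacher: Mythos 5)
Your proof is correct and complete: the variational inequality $(z - P_Z[z])^T(y - P_Z[z]) \leq 0$ followed by the expansion of $\|z-y\|^2 = \|(z-P_Z[z]) + (P_Z[z]-y)\|^2$ is exactly the standard argument given in the references the paper cites (the paper itself omits the proof and only points to~\cite{DPB:09},~\cite{DPB-AN-AO:03a},~\cite{AN-AO-PAP:08}). Nothing to add.
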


\subsection{Some properties of the distributed projected subgradient
  algorithm in~\cite{AN-AO-PAP:08}}

Consider the following distributed projected subgradient algorithm
proposed in~\cite{AN-AO-PAP:08}: $x^{[i]}(k+1) = P_{Z}[v_x^{[i]}(k) - \alpha(k) d^{[i]}(k)]$.
Denote by $e^{[i]}(k) := P_{Z}[v^{[i]}_x(k) - \alpha(k) d^{[i]}(k)] -
v^{[i]}_x(k)$. The following is a slight modification of Lemma 8 and its
proof in~\cite{AN-AO-PAP:08}.

\begin{lemma}
  Let the non-degeneracy assumption~\ref{asm2}, the balanced communication
  assumption~\ref{asm3} and the periodic strong connectivity assumption~\ref{asm1} hold. Suppose $Z\in{\real}^n$ is a closed and convex set. Then there exist $\gamma>0$ and $\beta\in(0,1)$ such that
  \begin{align}
    \|x^{[i]}(k) - \hat{x}(k)\|\leq
    N\gamma\sum_{\tau=0}^{k-1}\beta^{k-\tau}\{\alpha(\tau)\|d^{[i]}(\tau)\| + \|e^{[i]}(\tau) +
    \alpha(\tau)d^{[i]}(\tau)\|\} + N \gamma \beta^{k-1}\sum_{i=0}^N\|x^{[i]}(0)\|.\nnum
  \end{align}
  Suppose $\{d^{[i]}(k)\}$ is uniformly bounded for each $i\in V$, and
  $\sum_{k=0}^{+\infty}\alpha(k)^2 < +\infty$, then we have
  ${\sum_{k=0}^{+\infty}\alpha(k)\max_{i\in V}\|x^{[i]}(k) - \hat{x}(k)\|
    < +\infty}$.\label{lem4}
\end{lemma}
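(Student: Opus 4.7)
The plan is to recast the update law as a consensus iteration driven by a vanishing perturbation, then exploit the standard geometric mixing estimate for products of doubly stochastic weight matrices under periodic strong connectivity.

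First, I would set $\tilde{e}^{[i]}(k) := e^{[i]}(k) + \alpha(k) d^{[i]}(k) = P_Z[v_x^{[i]}(k) - \alpha(k)d^{[i]}(k)] - (v_x^{[i]}(k) - \alpha(k)d^{[i]}(k))$, so that
\begin{align*}
x^{[i]}(k+1) = \sum_{j=1}^N a^i_j(k) x^{[j]}(k) + \eta^{[i]}(k), \qquad \eta^{[i]}(k) := \tilde{e}^{[i]}(k) - \alpha(k)d^{[i]}(k).
\end{align*}
Because $Z$ is convex and $v_x^{[i]}(k) \in Z$ (as a convex combination of $x^{[j]}(k)\in Z$), applying Lemma~\ref{lem5} with $y = v_x^{[i]}(k)$ gives $\|\tilde{e}^{[i]}(k)\| \leq \alpha(k)\|d^{[i]}(k)\|$, so each $\|\eta^{[i]}(k)\|$ is controlled by the ``residual + subgradient'' terms appearing in the claimed bound.

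Next I would introduce the transition matrix $\Phi(k,s) := A(k)A(k{-}1)\cdots A(s)$ and unroll the linear recursion to write
\begin{align*}
x^{[i]}(k) = \sum_{j=1}^N [\Phi(k{-}1,0)]^i_j\, x^{[j]}(0) + \sum_{\tau=0}^{k-2}\sum_{j=1}^N [\Phi(k{-}1,\tau{+}1)]^i_j\, \eta^{[j]}(\tau) + \eta^{[i]}(k{-}1).
\end{align*}
By double stochasticity (Assumption~\ref{asm3}), averaging over $i$ gives $\hat{x}(k) = \hat{x}(0) + \tfrac{1}{N}\sum_{\tau=0}^{k-1}\sum_{j=1}^N \eta^{[j]}(\tau)$. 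Under Assumptions~\ref{asm2},~\ref{asm3},~\ref{asm1}, the standard spectral estimate (Lemma~4 of~\cite{AN-AO:09}) supplies constants $\gamma>0$, $\beta\in(0,1)$ with $\bigl|[\Phi(k,s)]^i_j - \tfrac{1}{N}\bigr| \leq \gamma\beta^{k-s}$. Subtracting, taking norms, applying this estimate termwise and then bounding the per-agent $\|\eta^{[j]}(\tau)\|$ by $\alpha(\tau)\|d^{[j]}(\tau)\| + \|\tilde e^{[j]}(\tau)\|$, and finally absorbing the sum over $j$ and dimensional factors into $N\gamma$ yields the first inequality of the lemma.

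For the summability assertion I would multiply the pointwise bound by $\alpha(k)$, take $\max_{i\in V}$, and sum over $k$. Let $D := \sup_{k,i}\|d^{[i]}(k)\| < \infty$. The initial-condition term contributes $\sum_k \alpha(k)\beta^{k-1}\cdot N\gamma\sum_i\|x^{[i]}(0)\|$, which is finite because $\alpha(k)$ is bounded and $\beta^{k-1}$ is geometrically summable. For the double sum, since $\|\eta^{[j]}(\tau)\|\leq 2\alpha(\tau)D$, the typical summand is dominated by a constant multiple of $\alpha(k)\alpha(\tau)\beta^{k-\tau}$. Using Young's inequality $\alpha(k)\alpha(\tau) \leq \tfrac{1}{2}(\alpha(k)^2 + \alpha(\tau)^2)$ and swapping the order of summation, both halves reduce to $\bigl(\sum_{m\geq 0}\beta^m\bigr)\bigl(\sum_k \alpha(k)^2\bigr)$, which is finite by the geometric series and the hypothesis $\sum_k \alpha(k)^2 < +\infty$. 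The only nontrivial ingredient is the spectral bound on $\Phi(k,s)$, and that is precisely the content of the cited convergence-of-products result for doubly stochastic matrices under periodic strong connectivity, so the main obstacle is really only careful bookkeeping once that estimate is invoked.
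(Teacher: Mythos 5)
Your proposal is correct and follows essentially the same route as the paper, which itself defers to Lemma~8 of~\cite{AN-AO-PAP:08}: rewrite the iteration as a doubly stochastic consensus step plus the perturbation $e^{[i]}(k)$, invoke the geometric estimate $|[\Phi(k,s)]^i_j - 1/N|\leq\gamma\beta^{k-s}$ for products of the weight matrices under Assumptions~\ref{asm2}--\ref{asm1}, bound $\|e^{[i]}(k)+\alpha(k)d^{[i]}(k)\|\leq\alpha(k)\|d^{[i]}(k)\|$ via the projection inequality of Lemma~\ref{lem5} with $y=v_x^{[i]}(k)\in Z$, and conclude summability from $\alpha(k)\alpha(\tau)\leq\tfrac{1}{2}(\alpha(k)^2+\alpha(\tau)^2)$ together with the geometric series. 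The only cosmetic discrepancy is that your derivation naturally yields $\max_{j\in V}$ of the perturbation terms on the right-hand side rather than the single index $i$ appearing in the stated bound, but since the lemma is only ever applied with a uniform bound on all $\|d^{[j]}(k)\|$ this is immaterial.
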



\begin{figure}[ht]
  \centerline{\epsfxsize=3.5in \epsffile{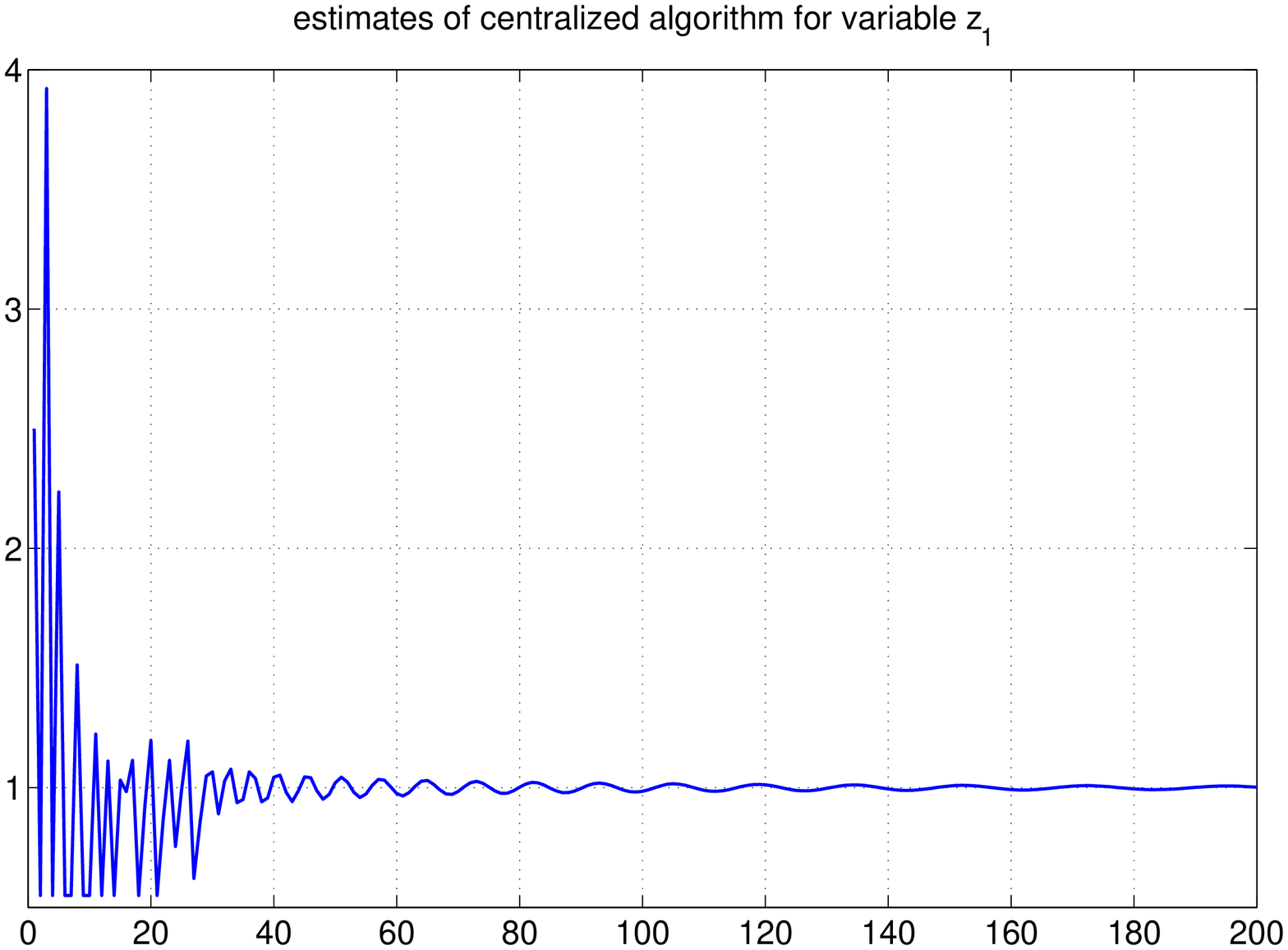} \epsfxsize=3.5in \epsffile{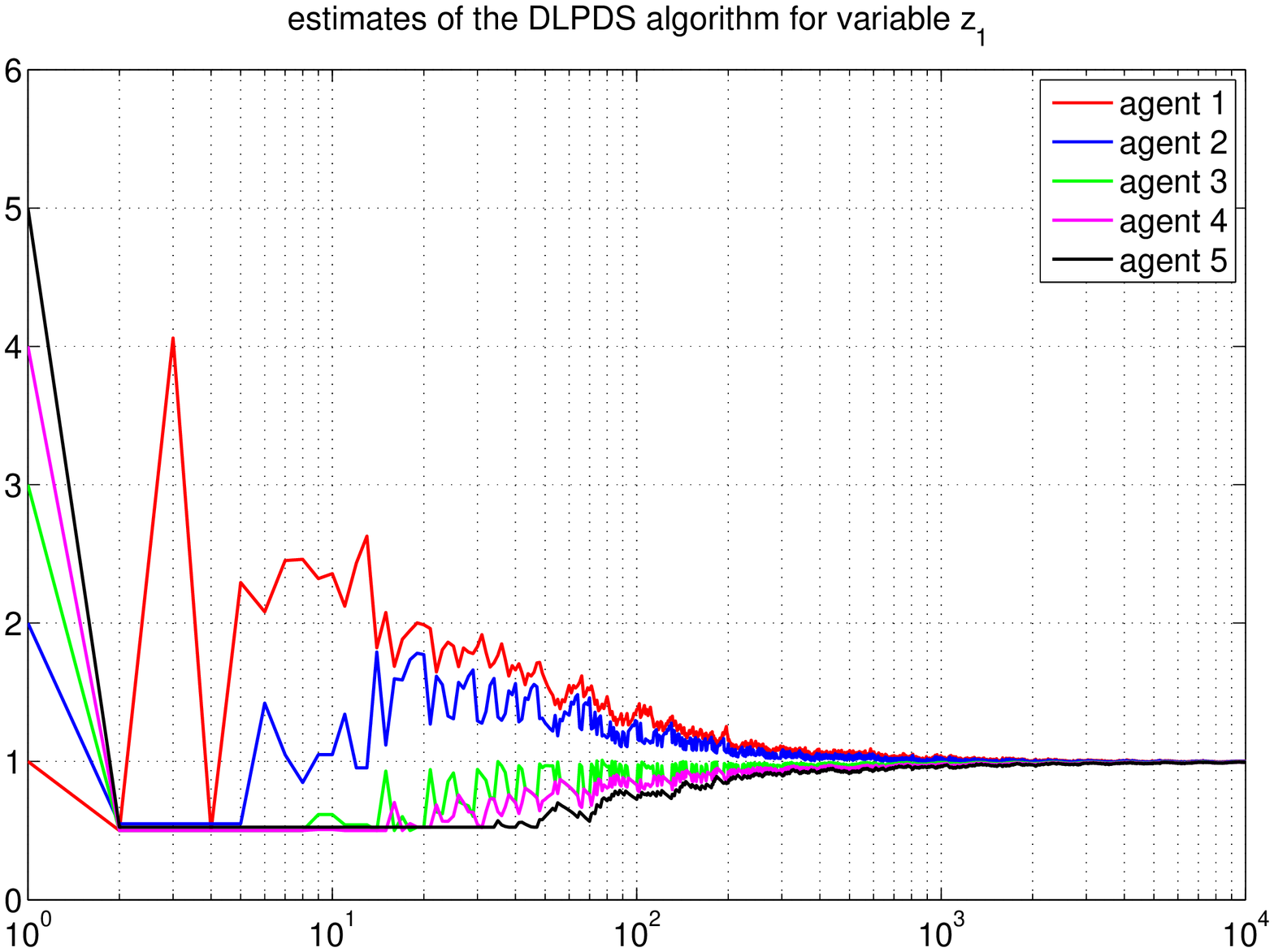}}
  \caption{Estimates of variable $z_1$ of centralized algorithm and the DLPDS algorithm}\label{fig1}
\end{figure}

\begin{figure}[ht]
  \centerline{\epsfxsize=3.5in \epsffile{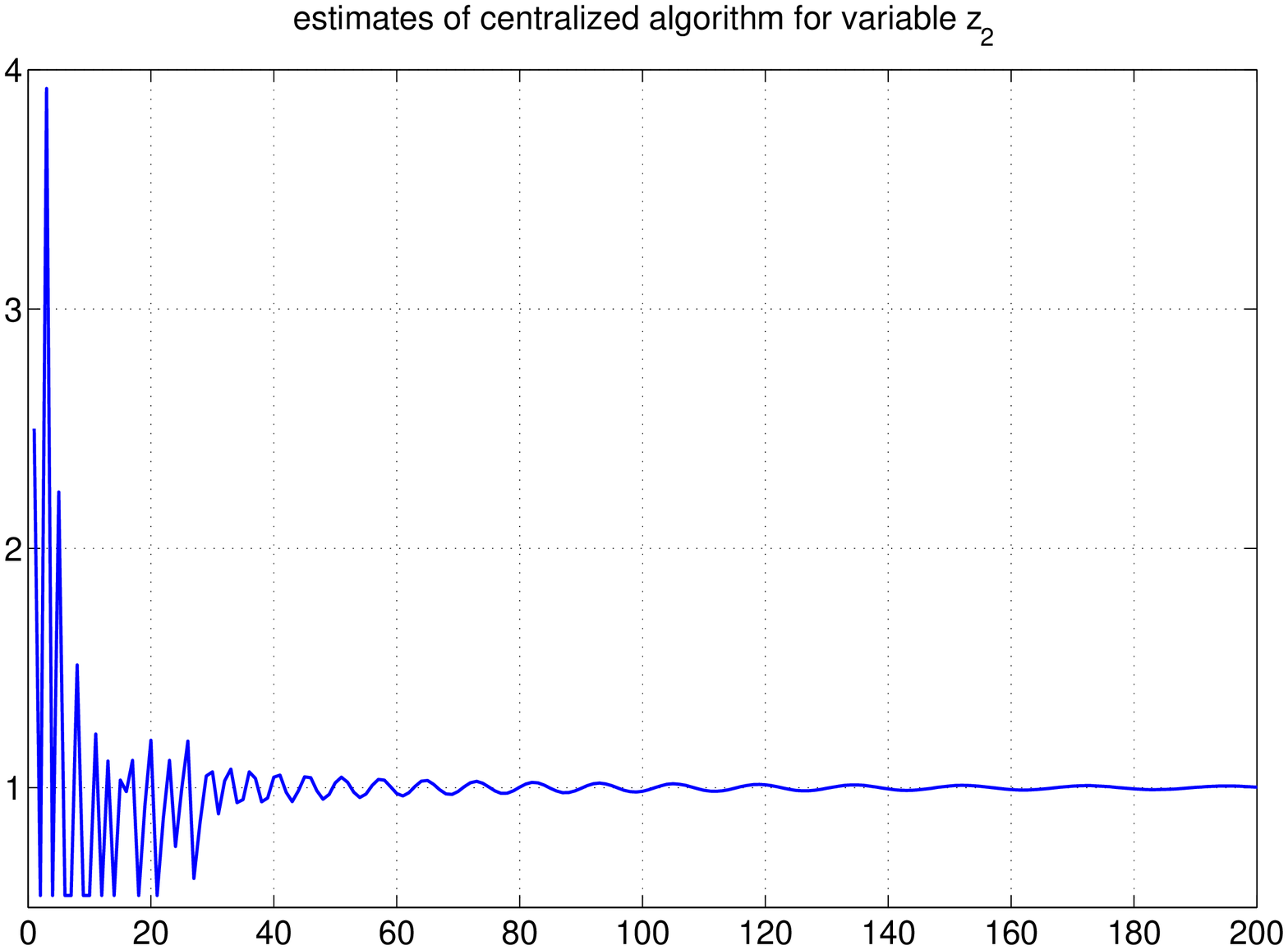} \epsfxsize=3.5in \epsffile{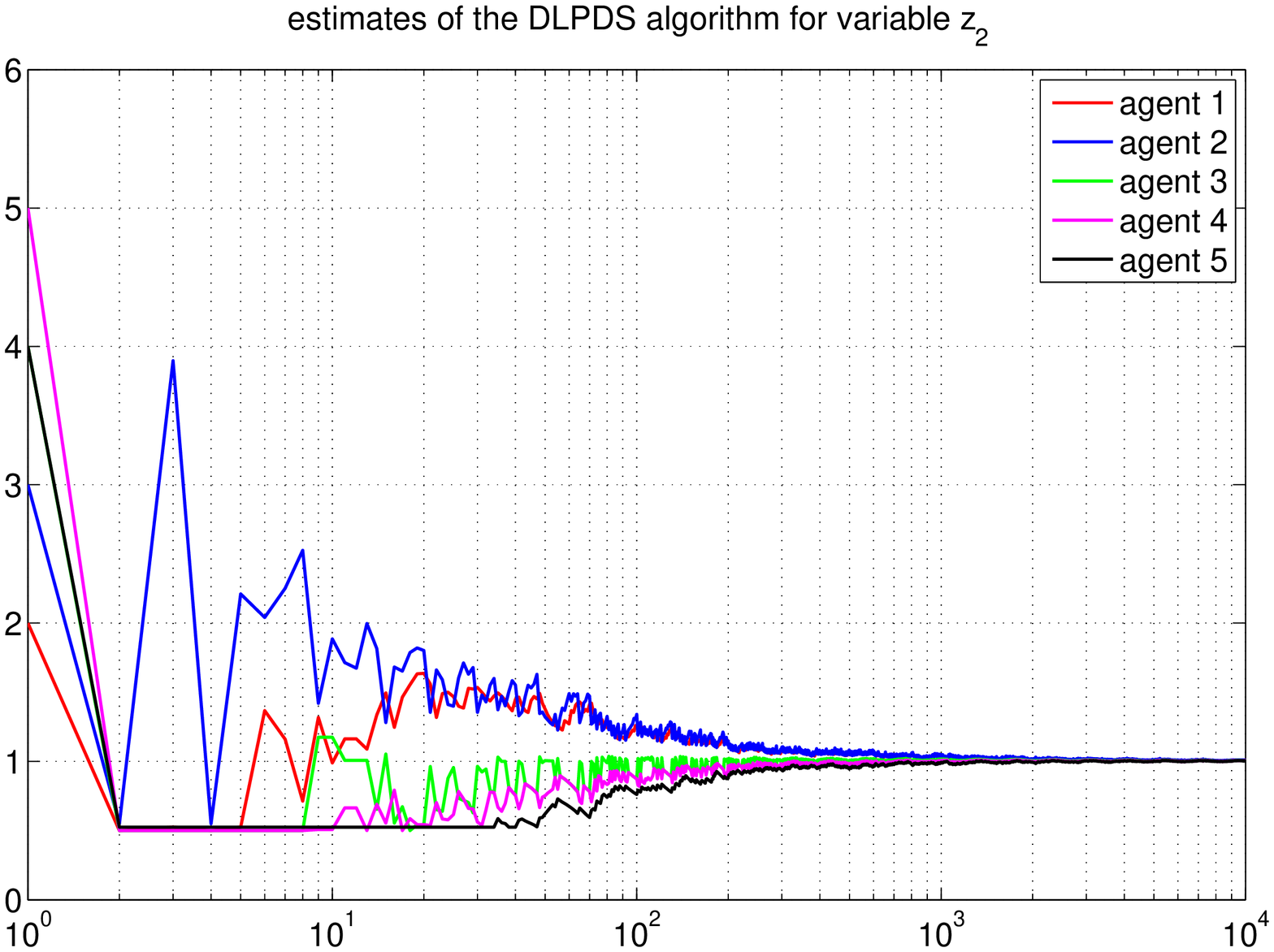}}
  \caption{Estimates of variable $z_2$ of centralized algorithm and the DLPDS algorithm}\label{fig2}
\end{figure}

\begin{figure}[ht]
  \centerline{\epsfxsize=3.5in \epsffile{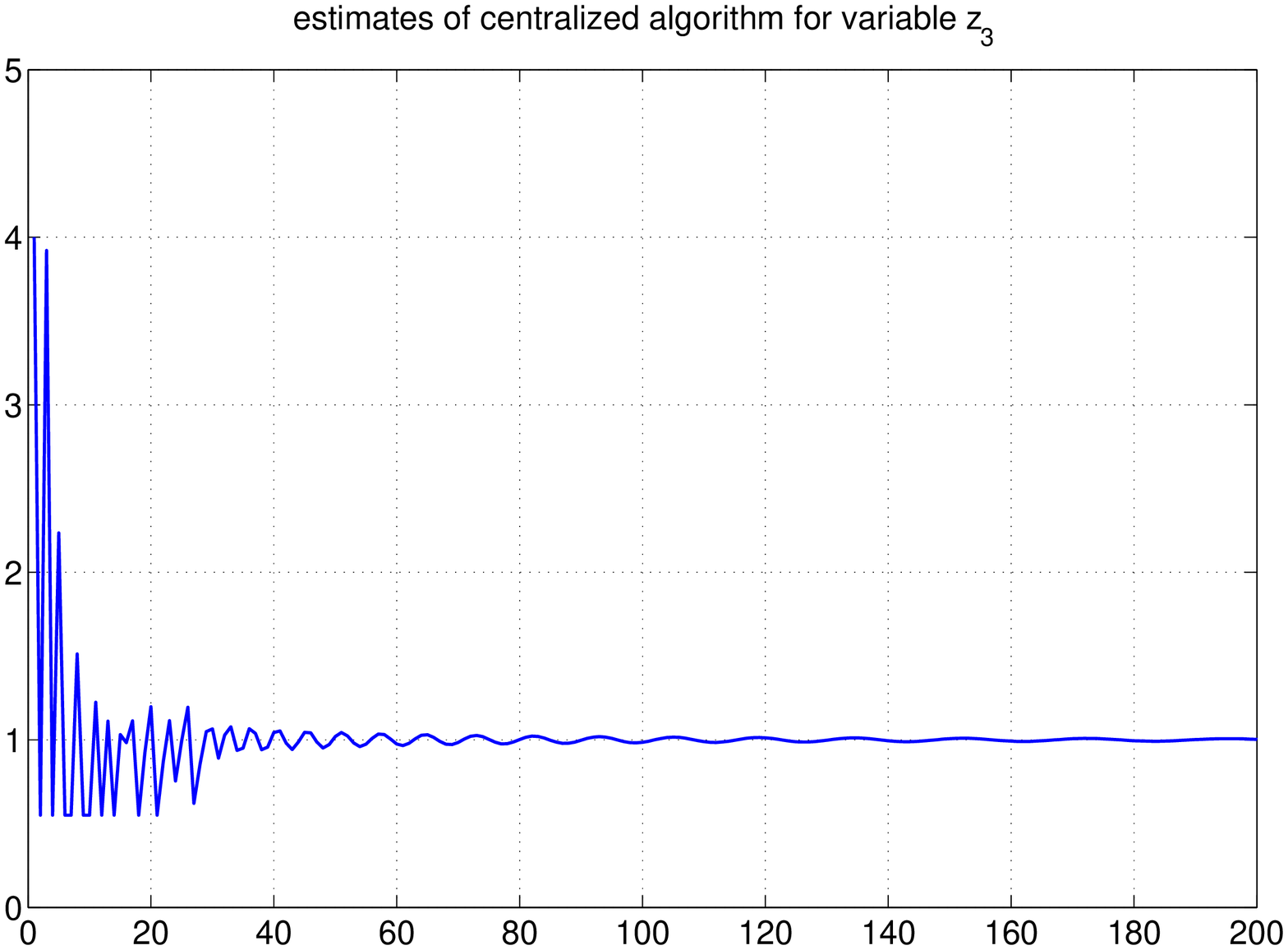} \epsfxsize=3.5in \epsffile{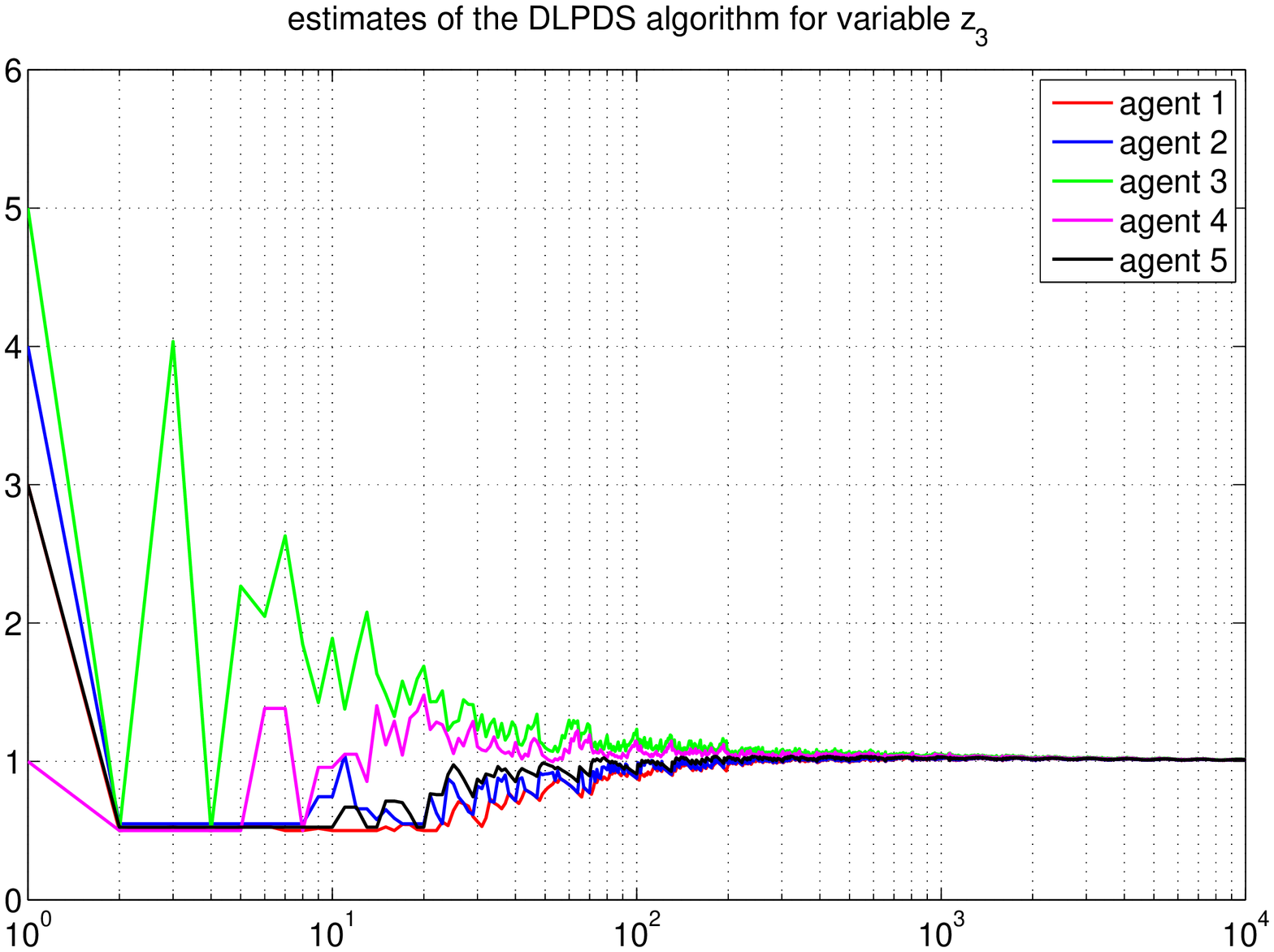}}
  \caption{Estimates of variable $z_3$ of centralized algorithm and the DLPDS algorithm}\label{fig3}
\end{figure}

\begin{figure}[ht]
  \centerline{\epsfxsize=3.5in \epsffile{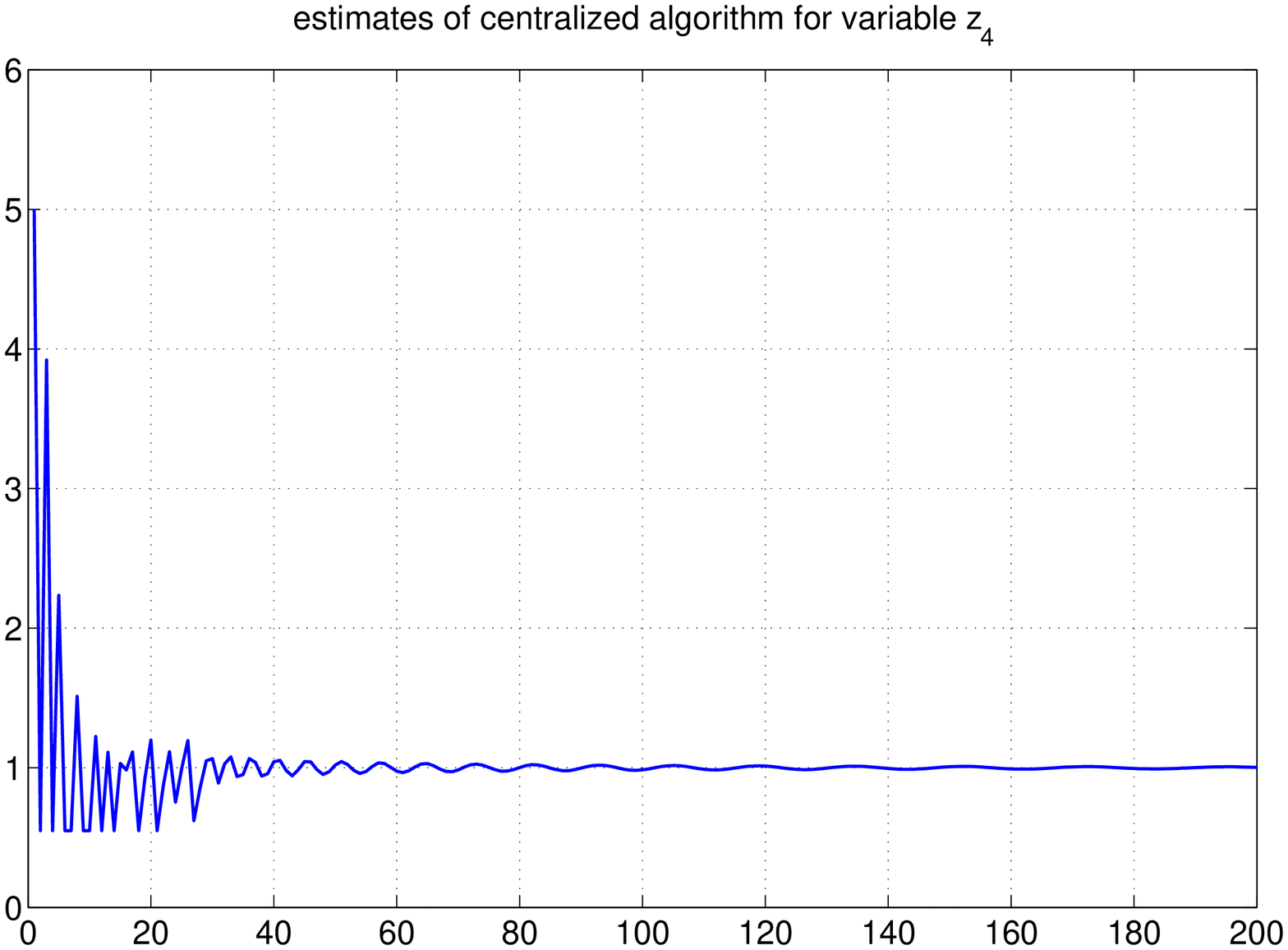} \epsfxsize=3.5in \epsffile{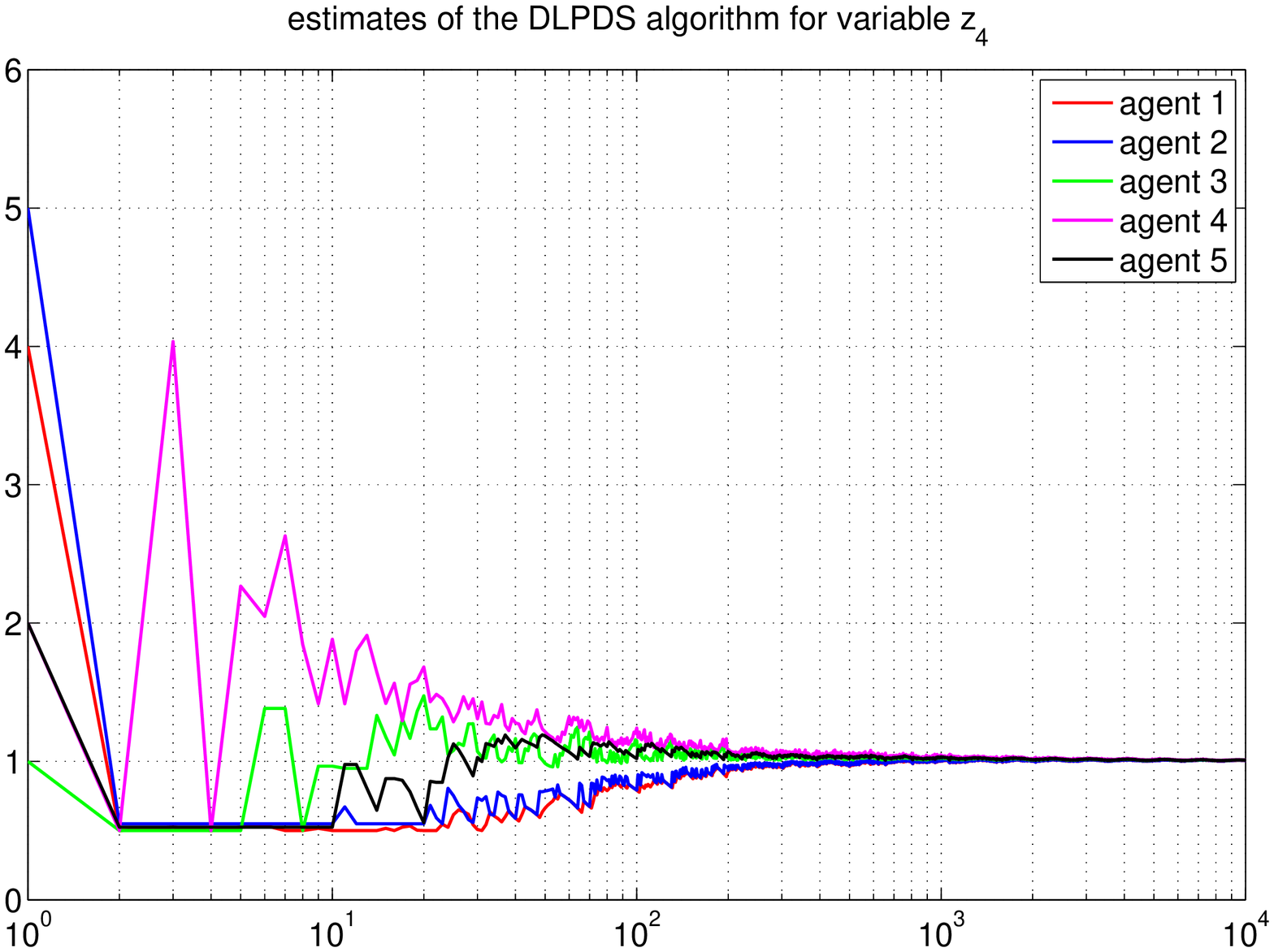}}
  \caption{Estimates of variable $z_4$ of centralized algorithm and the DLPDS algorithm}\label{fig4}
\end{figure}

\begin{figure}[ht]
  \centerline{\epsfxsize=3.5in \epsffile{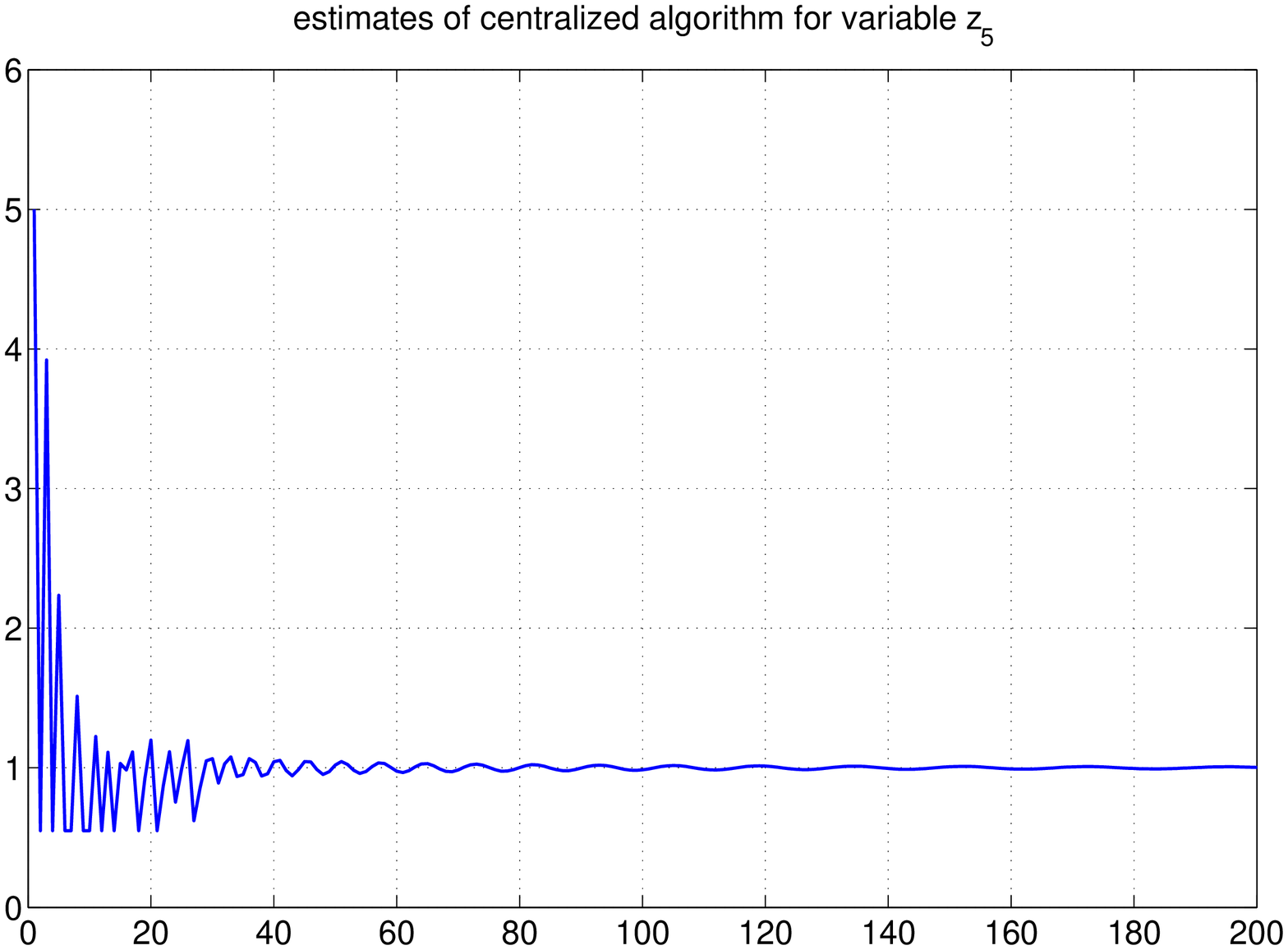} \epsfxsize=3.5in \epsffile{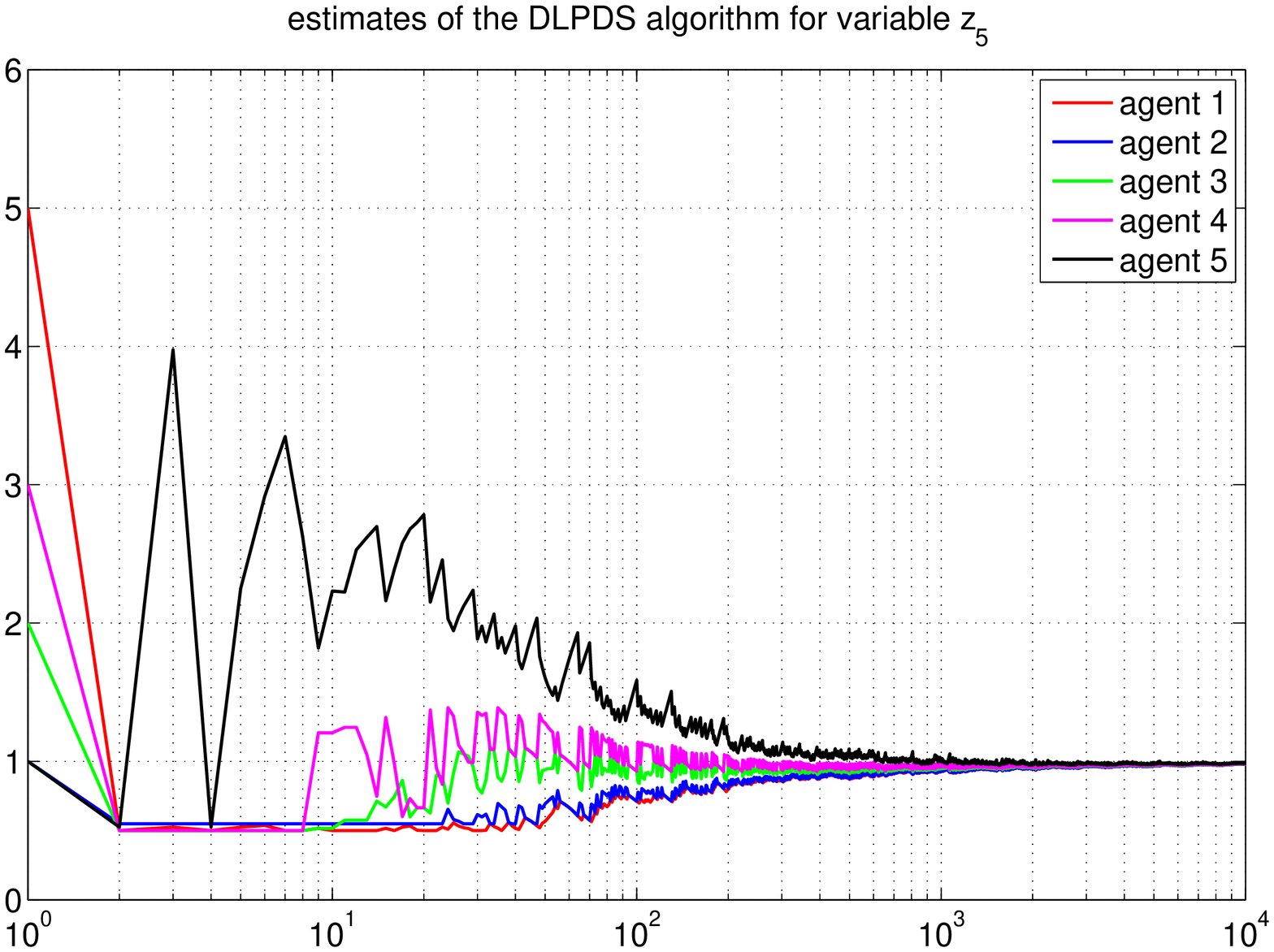}}
  \caption{Estimates of variable $z_5$ of centralized algorithm and the DLPDS algorithm}\label{fig5}
\end{figure}

\begin{figure}[ht]
  \centerline{\epsfxsize=6in \epsffile{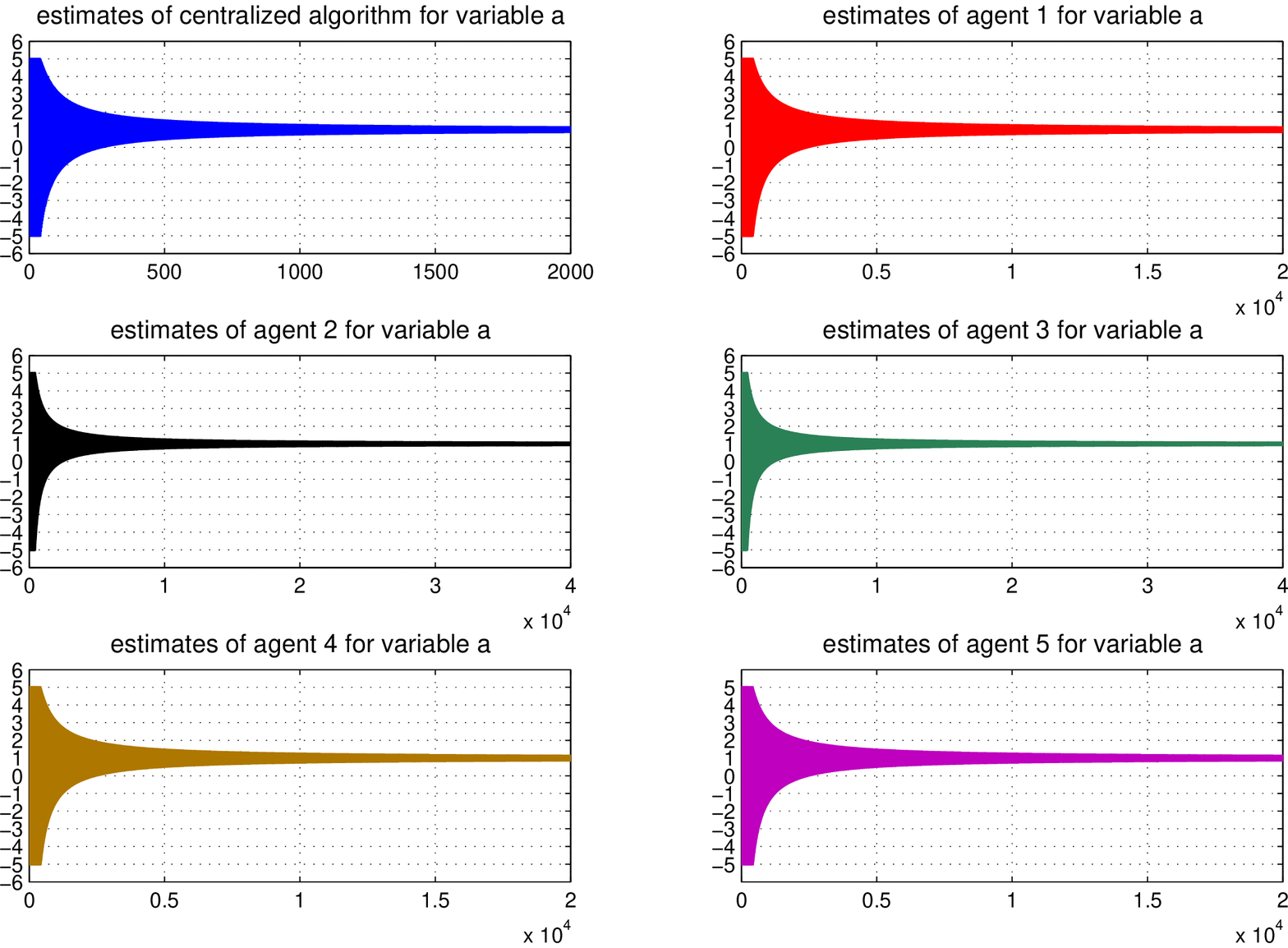}}
  \caption{Estimates of variable a in the DPPDS algorithm}\label{fig6}
  \centerline{\epsfxsize=6in \epsffile{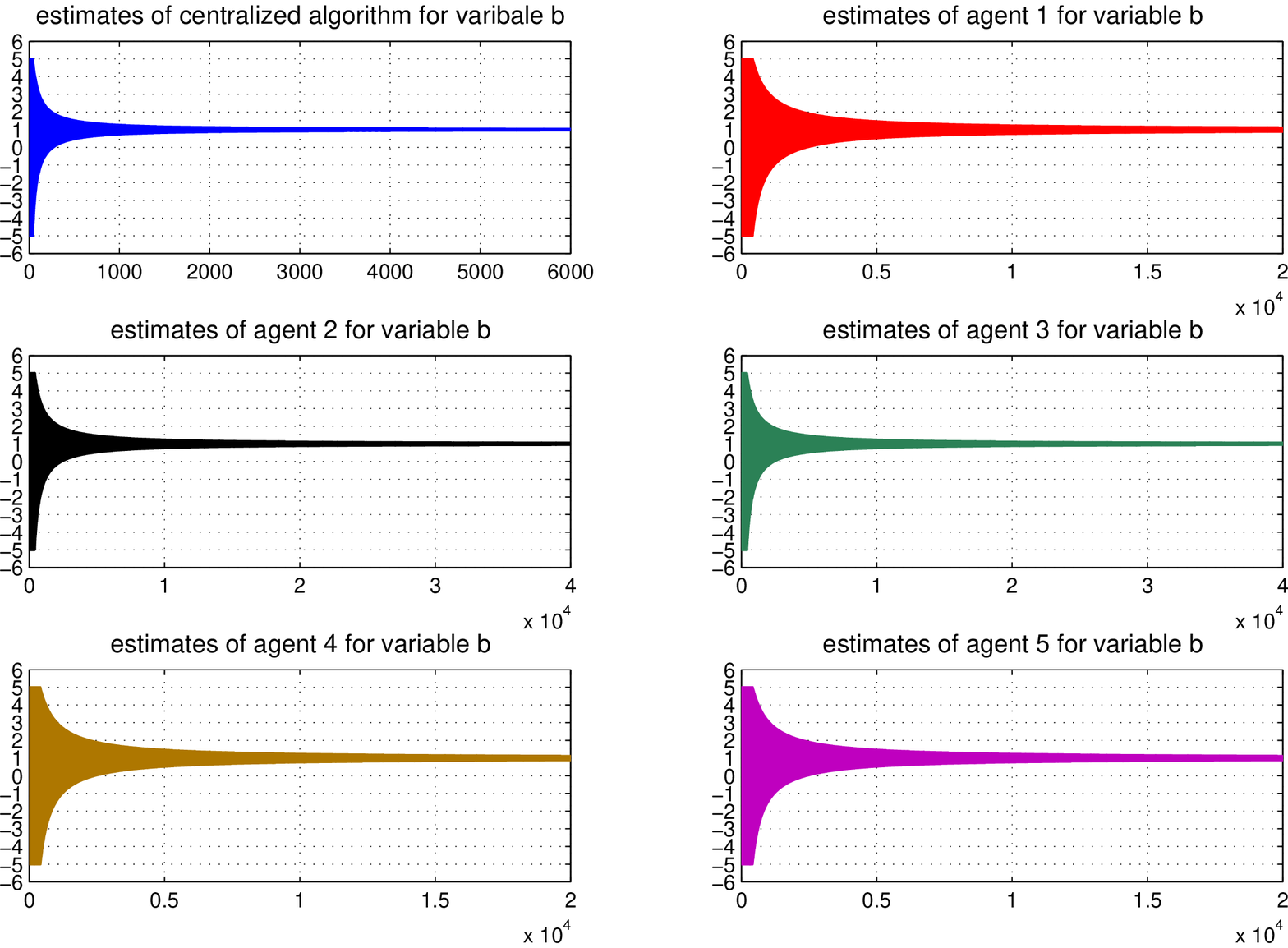}}
  \caption{Estimates of variable b in the DPPDS algorithm}\label{fig7}
\end{figure}

\begin{figure}[ht]
  \centerline{\epsfxsize=6in \epsffile{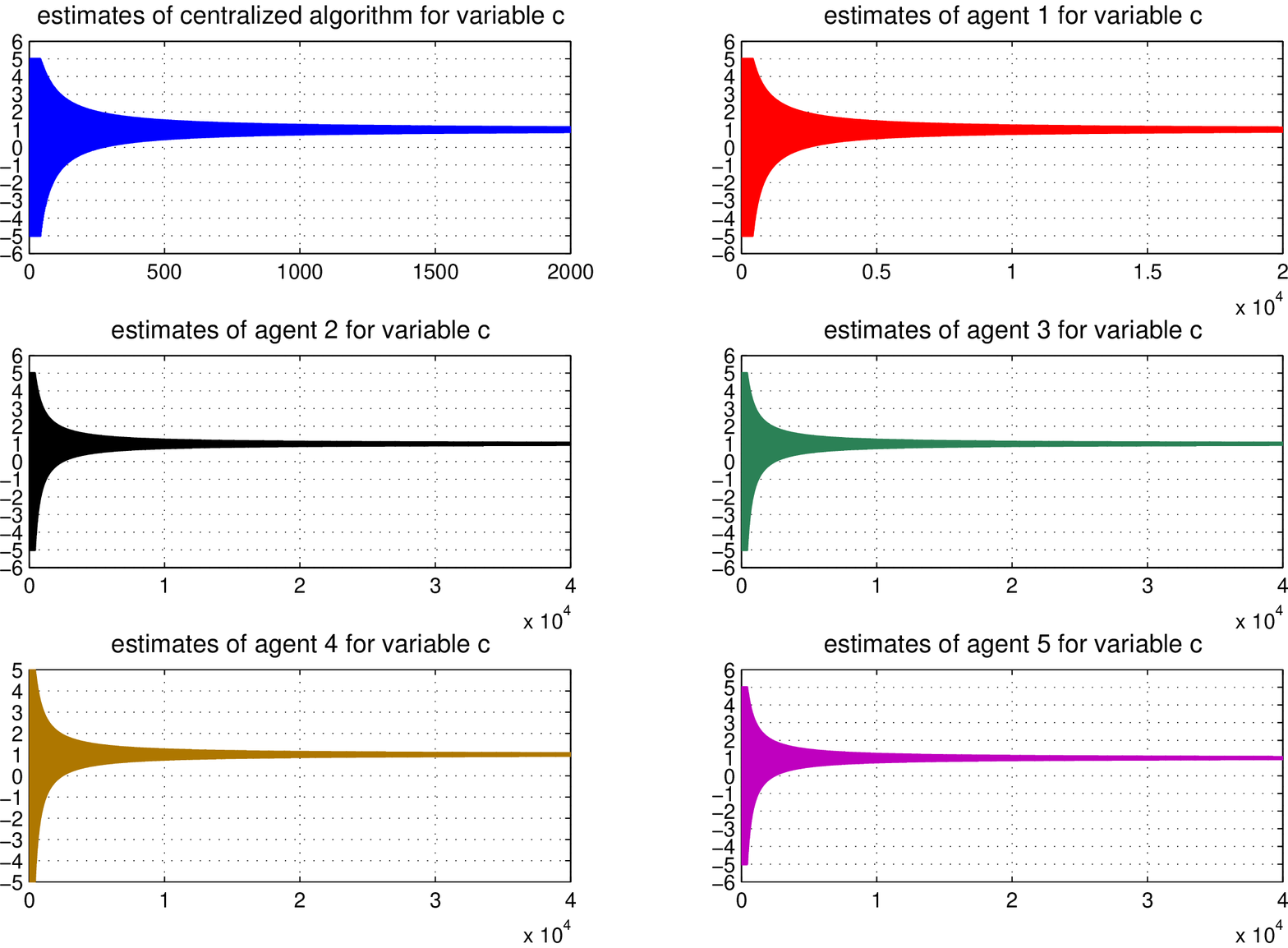}}
  \caption{Estimates of variable c in the DPPDS algorithm}\label{fig8}
  \centerline{\epsfxsize=6in \epsffile{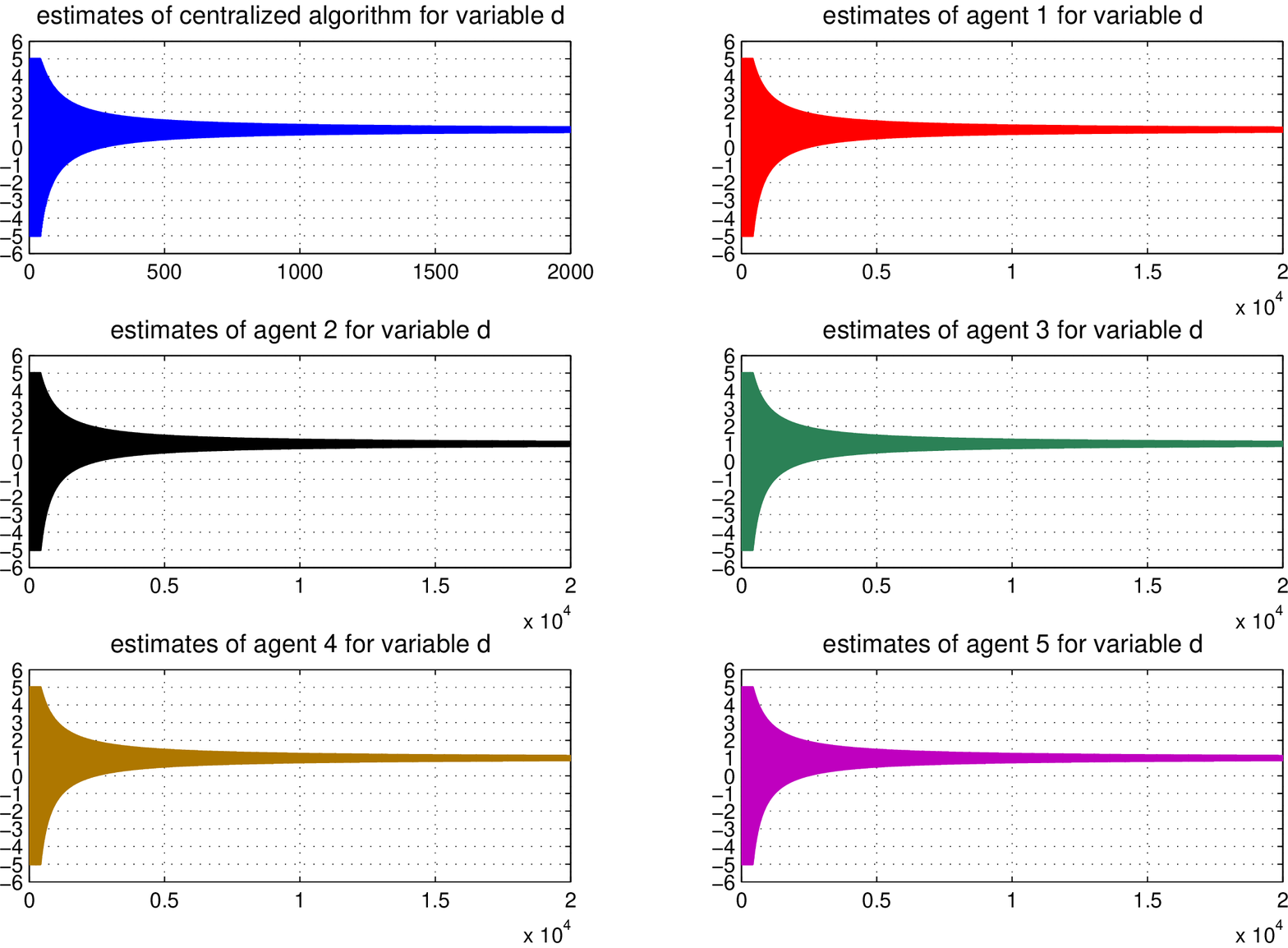}}
  \caption{Estimates of variable d in the DPPDS algorithm}\label{fig9}
\end{figure}

\begin{figure}[ht]
  \centerline{\epsfxsize=6in \epsffile{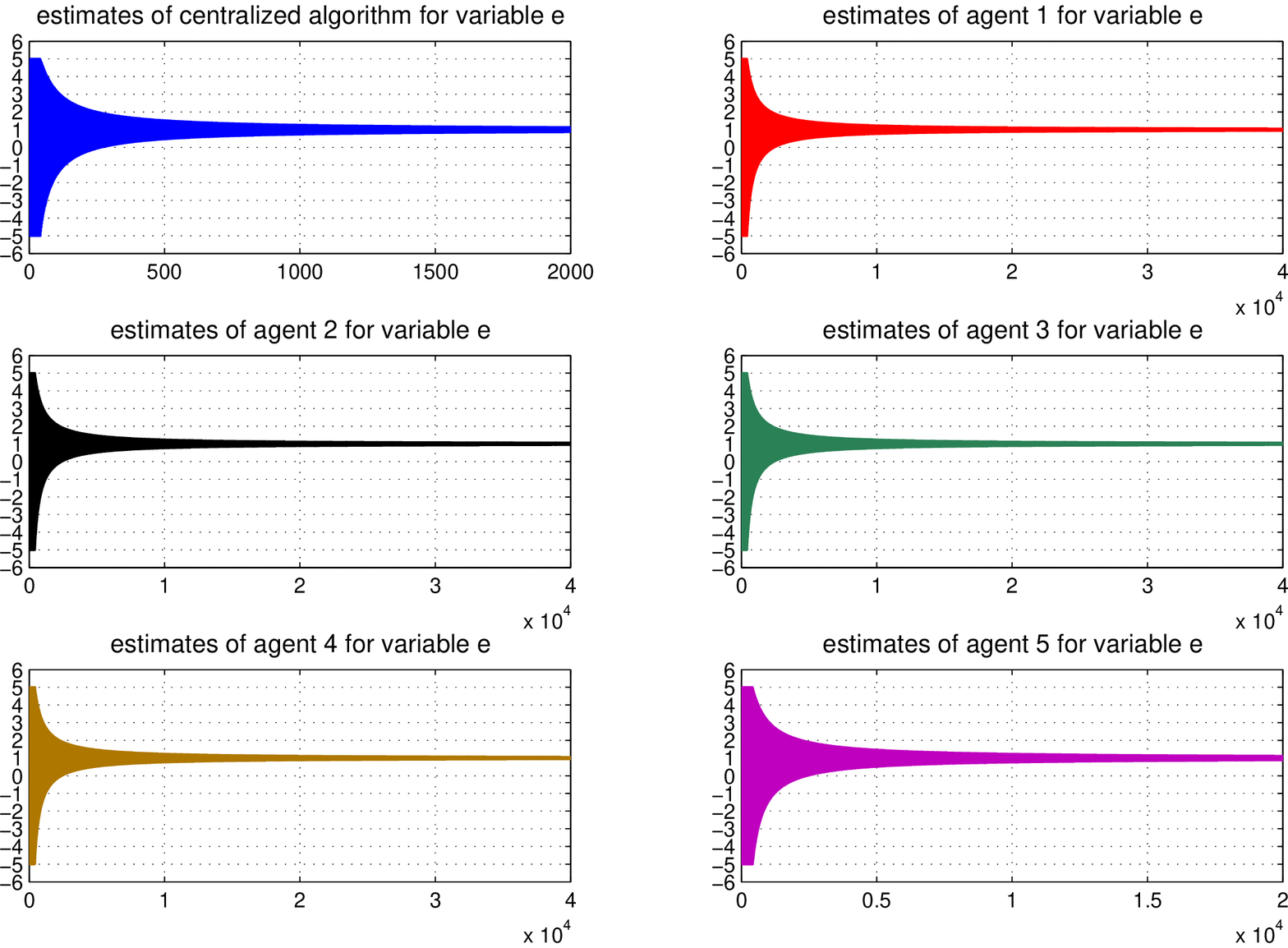}}
  \caption{Estimates of variable e in the DPPDS algorithm}\label{fig10}
\end{figure}

\end{document}